\documentclass[12pt]{article}
\usepackage[utf8]{inputenc}
\usepackage{graphics}
\usepackage{graphicx}
\usepackage{float}
\usepackage{tikz}
\usepackage{amsthm}
\usepackage{color}
\usepackage{xcolor}
\usepackage[a4paper, total={6.5in, 8in}]{geometry}
\usepackage{amsmath}
\usepackage{dirtytalk}
 
\usepackage{hyperref}
\hypersetup{
    colorlinks=true,
    linkcolor=blue,
    citecolor = red,
    urlcolor = false
}

\usepackage{enumerate}
\usepackage{enumitem}
\usepackage{bm}
\usepackage{bbm}
\usepackage{pstricks} 
\usepackage{pst-node}
\usepackage{pst-tree}
\usepackage{calrsfs}

\newtheorem{theorem}{Theorem}[section]
\newtheorem{remark}[theorem]{Remark}

\newtheorem{lemma}[theorem]{Lemma}

\newtheorem{definition}[theorem]{Definition}
\newtheorem{corollary}[theorem]{Corollary}

\newtheorem{proposition}[theorem]{Proposition}
\usepackage{amssymb}
\usepackage{shuffle}
\usepackage{amsfonts}
\usepackage{mathtools}

\newcommand{\Da}[1]{\mathfrak{D}(\mathfrak{a})}

\newcommand{\largetrees}{\psset{levelsep=-10pt,nodesep=-4pt,treesep=5pt}
}
\largetrees

\title{Functional representation and\\ functional calculus for controlled paths}
\author{Anna Ananova and Rama Cont}
\date{September 2026.}

\begin{document}

\maketitle

\begin{abstract}
We study the relation between non-anticipative functional calculus
and compatible families of controlled paths. For a  non-anticipative
functional  satisfying horizontal Lipschitz regularity, we show
that the iterated vertical derivatives
generate compatible higher-order controlled Taylor expansions along
H\"older controls, with level-dependent remainder exponents. We derive a rough-integration criterion
from these estimates and show that, for $\gamma-$H\"older controls with $\tfrac{1}{2}\geq \gamma>\sqrt{2}-1$, the first-order
remainder estimate is recovered.

Our main result is a converse representation theorem. We consider
non-anticipative  functionals
$G_0,\ldots,G_p$ which  satisfy
compatible higher-order controlled Taylor estimates along $\gamma$-H\"older paths. Under natural continuity and compatibility
assumptions, we prove that they can be represented as the iterated
vertical derivatives of the base functional:
\[
G_j=\nabla_\omega^jG_0,\qquad j=1,\ldots,p.
\]
Thus the Gubinelli coefficients of a compatible controlled family are symmetric and uniquely determined by its base functional; in particular, the Gubinelli derivative is identified with the vertical derivative introduced in functional It\^o calculus, giving the coefficient hierarchy an intrinsic path-space differential structure.

We show that this class of compatible coefficient families
is stable under admissible non-anticipative functional composition and derive the corresponding functional chain
rule.  As an application, we  obtain
well-posedness for a class of   path-dependent rough
differential equations with Volterra memory, and identify the
Gubinelli derivative of the resulting path-dependent rough
coefficient. 
\end{abstract} 

\newpage
\tableofcontents
\newpage
\section{Introduction}

The theory of {\it controlled paths} \cite{GUBINELLI200486,frizhairer} describes integration against irregular signals by replacing classical
smoothness of the integrand with an expansion in the increments of the underlying signal ('control').  In
Gubinelli's formulation \cite{GUBINELLI200486}, a path $Y$ controlled by a signal $X$ is equipped with coefficient
paths $Y^1,Y^2,\ldots,Y^p$ such that its increments, and those of the successive coefficients,
admit Taylor-type expansions in the increments of $X$, with regularity  graded linearly across levels: the $j$th level of
the  rough path has H\"older regularity $j\gamma$, and the corresponding remainders have the
matching linear scale prescribed by rough path theory.
The simplest example of paths controlled by $X$ is provided by smooth functions of $X$: if $Y=f(X)$ with $f\in C^p,$ then the expansions are provided by the usual Taylor expansions with coefficients $Y^k=\nabla^k f(X)$.
The functional It\^o calculus \cite{dupire2019,CF10A} and its pathwise version, the non-anticipative functional calculus \cite{CF10B,cont2012} provide Taylor-type functional expansions for path-dependent functionals \cite{ananova2017,Bielert2026,dupire2022} in terms of their {\it vertical} derivatives  and  {\it horizontal} derivatives  \cite{CF10A,dupire2019,cont2012}, and are thus natural candidates for
constructing controlled paths through functional transformations.

Several results of this type have been obtained in the literature under different assumptions.
First-order
functional expansions along a H\"older path were obtained in ~\cite[Lemma~2.2]{ananova2017}, which identified the vertical derivative as the first Gubinelli
coefficient, with a remainder of order $\gamma(1+\gamma)$ rather
than the classical order $2\gamma$.  Dupire and
Tissot-Daguette~\cite{dupire2022} have developed a different, signature-based functional Taylor
expansion involving mixed horizontal and vertical derivatives and the signature of the augmented
path.  Bielert~\cite{Bielert2026} obtained a higher-order controlled structure and change of variable formula for
H\"older controls of arbitrary regularity  under  further differentiability assumptions, also involving mixed horizontal and vertical derivatives such as
$ D_tF,D_t\nabla_\omega F,D_t\nabla_\omega^2F$ etc. More recently, Cuchiero et al. \cite{cuchiero2025} obtain expansions for functionals of c\`adl\`ag rough paths. 

The principal novelty of the present paper concerns the converse
direction. Controlled rough path theory is normally formulated
relative to a fixed control $X$. Thus, notwithstanding the
terminology of ``Gubinelli derivatives'' for the expansion
coefficients $Y^1,\ldots,Y^p$, the controlled expansion alone does
not define an intrinsic differentiation operator with respect to
$X$.   We show that,  if one considers instead \emph{families} of controlled paths  parameterized by the
underlying control $X$ then, under a  natural
continuity assumption and a compatibility condition on the expansions, the structure is rigid: the hierarchy of Gubinelli coefficients $Y^1,..,Y^p$ is then necessarily generated by
a single non-anticipative functional $G_0$, and the Gubinelli coefficients are precisely given by the
successive {\it vertical} derivatives of $G_0$ computed along $X$.  Thus the source of the rigidity is compatibility across
controls rather than a roughness or non-degeneracy condition imposed on a particular
control.

More precisely, consider non-anticipative coefficient functionals
$
G_0,G_1,\ldots,G_p$
such that, for every H\"older control $Y$, the family
\[
G_0(\cdot,Y),\,G_1(\cdot,Y),\ldots,G_p(\cdot,Y)
\]
satisfies compatible expansions in the increments of $Y$.  If the $G_\ell$ are strongly left-continuous
and the remainder in the expansion at level $\ell$ satisfies an estimate (see Theorem \ref{thm:representation}) then we have the representation
\[
G_{j}=\nabla^{j}_\omega G_0,
\qquad j=1,\ldots,p
\]
on vertical fibres over continuous stopped paths:
for every vertical perturbation $X_t^a=X_t+a 1_{[t,T]}$ of a continuous path $X$
\[
G_j(t,X_t^a)
=
D^j\bigl[b\mapsto G_0(t,X_t^b)\bigr](a),
\qquad j=1,\ldots,p.
\]
In particular, the whole coefficient hierarchy is represented by the base
functional $G_0$ which is $p$ times vertically differentiable.  Symmetry
and uniqueness of the coefficients follow immediately.  This functional representation
opens the way to applying techniques from non-anticipative functional calculus to
compatible families of controlled rough paths of arbitrary finite order.

The functional expansion results in Section \ref{sec.expansion} complement this representation theorem.  They show that
such functional controlled hierarchies arise under assumptions which are weaker in the
horizontal direction than those used in the higher-order results described above.  We do
not assume the existence of the horizontal derivatives $D_tF, D_t\nabla_\omega F,$ etc. : 
instead, the time dependence of $F$ and of its vertical derivatives is controlled only by
horizontal Lipschitz estimates.  This requires a different proof mechanism.  Rather than
applying a space-time Taylor formula to a regular approximation as in \cite{Bielert2026,dupire2022}, we work with piecewise-constant
approximations, separate horizontal and vertical increments, expand only in the vertical
directions, and optimize the approximation scale.  As we will see, the price for dispensing with horizontal differentiability is a  level-dependent family of remainder exponents.

\subsection{Overview}
Let $F$ possess $p$ vertical derivatives $
\nabla_\omega^jF, j=0,\ldots,p$ assumed to be
uniformly Lipschitz-continuous both horizontally and with respect to the supremum norm.  For a $\gamma$-H\"older path $X$, we prove, for
$\ell=0,\ldots,p-1$, a functional expansion
\[
\begin{aligned}
\nabla_\omega^\ell F(t,X_t)-\nabla_\omega^\ell F(s,X_s)
={}&
\sum_{j=1}^{p-\ell}\frac1{j!}
\nabla_\omega^{\ell+j}F(s,X_s)
[X_{s,t}^{\otimes j}]
+R^\ell_{s,t},
\end{aligned}
\]
with
\[
\|R^\ell_{s,t}\|
\lesssim
|t-s|
+
\|X\|_{\gamma;[s,t]}^{q_\ell}|t-s|^{\beta_\ell},
\]
where
\[
q_\ell
=
\frac{p-\ell+\gamma}{(p-\ell)(1-\gamma)+\gamma},
\qquad
\beta_\ell=\gamma q_\ell.
\]
The constants are uniform in the control.  Hence the vertical-derivative hierarchy has the
same algebraic form as a higher-order controlled expansion, but its remainder regularity is
not the standard linear grading across levels.  This gives a higher-order counterpart of the
 estimate in \cite{ananova2017} under purely Lipschitz horizontal regularity.  Qualitatively, the
existence of a higher-order controlled hierarchy overlaps with Bielert's result; the
hypotheses, the proof, and the resulting exponent profile are different.

For one-form-valued functionals we next determine when these estimates are sufficient for
rough integration.  If $p=\lfloor1/\gamma\rfloor$, sewing reduces to the conditions
\[
\beta_\ell+(\ell+1)\gamma>1,
\qquad \ell=0,\ldots,p-2.
\]

We then show that, in the first-order problem, the loss of
regularity produced by the approximation estimate is not
optimal. If
\[
\sqrt2-1<\gamma\le\frac12,
\qquad
X\in C^\gamma([0,T],\mathbb R^d),
\]
a second-order compensated sewing argument yields
\[
F(t,X_t)-F(s,X_s)
=
\nabla_\omega F(s,X_s)[X_{s,t}]
+
R^F_{s,t},
\qquad
\|R^F_{s,t}\|\lesssim |t-s|^{2\gamma}.
\]
Hence $
\bigl(F(\cdot,X),\nabla_\omega F(\cdot,X)\bigr)$
is a controlled path in the usual first-order sense. The
threshold $\gamma>\sqrt2-1$ arises from the sewing condition
$2\gamma+\gamma^2>1$; no other assumption on the control is required.

Finally, we prove the converse representation theorem described above  in a form which is
independent of the particular exponents arising in the forward estimates (Theorem \ref{thm:representation}).  Only a remainder
which is $o(r)$ in the size of the control increment is required.  
The resulting symmetry and uniqueness statements show that a compatible family of
Gubinelli coefficients carries an intrinsic path-space differential structure and may be represented as a functional Taylor expansion.

Our results thus establish a two-way correspondence:
functional calculus produces controlled hierarchies, while compatible
families satisfying the assumptions of the representation theorem
are generated by vertically differentiable non-anticipative
functionals.

It is well known that Gubinelli derivatives associated with a fixed control need not
be unique in general. Uniqueness may be recovered by imposing
roughness or non-degeneracy on the driving signal \cite{frizhairer,vaskovskii2022}. Our result is of a different nature. We impose no roughness condition on controls: instead, rigidity arises from compatibility
of the controlled expansions as the underlying control varies.
Under this compatibility condition, the coefficients are not
only unique: they are necessarily the  vertical
derivatives of the base functional computed along the control.

The approximation approach used below originates in our previous work \cite{ananova2017}, which provided  the first  link
between  vertical derivatives and Gubinelli-type
expansions.  There, a H\"older path is
replaced on a short interval by a bounded-variation
approximation, functional change-of-variable formulas are
applied along the approximating path, and the resulting growth
of its variation is balanced against the uniform approximation
error. The same paper also develops higher-order expansion
arguments based on repeated vertical differentiation and
symmetry of the vertical derivatives.

This approximation-and-balancing principle was subsequently
used and developed in several directions. Ananova
\cite{ananova2023} applied it to stability of controlled paths
under non-anticipative functional transformations and to
path-dependent rough differential equations, while
Bielert~\cite{Bielert2026} developed a higher-order version under stronger horizontal
differentiability assumptions. The present work follows the
same approximation philosophy but removes horizontal
derivatives from the higher-order Taylor estimates:
horizontal dependence is controlled only through Lipschitz
bounds, and the approximation is organized so as to separate
horizontal and vertical increments.
\subsection{Outline}
Section~\ref{sec:functional-calculus} defines the notation and recalls notions from
non-anticipative functional calculus and 
controlled rough paths used in the sequel.
Section~\ref{sec.expansion} studies the direct passage from functional calculus to
controlled paths.  Proposition ~\ref{prop:functional-taylor-expansion} establishes a higher-order controlled Taylor
estimate under horizontal Lipschitz regularity.  In Section \ref{sec:weak-horizontal-integration} we derive  the consequences of this expansion for
rough integration, and in Section \ref{sec:quadratic-variation-recovery} we recover
first-order controlled regularity for $\gamma$-H\"older paths with $\tfrac{1}{2}\geq \gamma> \sqrt{2}-1$.

Section~\ref{sec:converse} is devoted to the converse direction. Our main result is a
functional representation theorem for compatible families of
controlled paths (Theorem \ref{thm:representation}), which shows that their coefficients are given by the vertical derivatives of a
single non-anticipative functional.
In Section \ref{sec:rigidity} we derive symmetry and
uniqueness as consequences of this representation theorem.

Section~\ref{sec.chainrule} develops a functional calculus for such compatible
families. We derive a functional chain rule for their coefficient
hierarchies, introduce a finite-jet formulation and prove that
the abstract $o(r)$-compatible class is stable under admissible
non-anticipative functional transformations.
Combining this
stability result with the representation theorem (Thm \ref{thm:representation}) identifies the
transformed Gubinelli coefficients with the vertical derivatives
of the transformed base functional.
Section \ref{sec:application-rde} gives an application to rough differential equations with path-dependent coefficients.
Finally, Section~\ref{sec:discussion} discusses the representation
principle and its relation to other approaches to functional
calculus for controlled paths.
\section{Notations and preliminary results}
\label{sec:functional-calculus}

We collect the notions from non-anticipative functional calculus
that are used in the sequel. We refer to
\cite{ananova2017,cont2012,CF10B} for the
general theory and functional change of variable formulae.
\subsection{Stopped paths and vertical derivatives}
We use the terminology and notations of \cite{ananova2017,cont2012}.\\
$D([0,T],\mathbb R^d)$ denotes the space of
c\`adl\`ag paths and, for $X\in D([0,T],\mathbb R^d)$,
\[
X_t(r):=X(r\wedge t)
\]
denotes the path stopped at time $t$.
The space of stopped paths is the quotient space
\[
\Lambda_T^d
:=
\bigl([0,T]\times D([0,T],\mathbb R^d)\bigr)/{\sim},
\]
for the equivalence relation
\[
(t,X)\sim(t',X')
\quad\Longleftrightarrow\quad
t=t'
\ \text{ and }\ 
X_t=X'_{t'}.
\]
We write a stopped path simply as $(t,X_t)$ and equip
$\Lambda_T^d$ with the metric
\[
d_\infty\bigl((t,X_t),(s,Y_s)\bigr)
:=
|t-s|+\|X_t-Y_s\|_\infty.
\]

We denote 
$
W_T^d\subset\Lambda_T^d $
the subset of stopped paths having a continuous representative.

A map $
F:\Lambda_T^d\longrightarrow E,$
where $E$ is a finite-dimensional normed space, is called
non-anticipative. Equivalently,
\[
F(t,X)=F(t,X_t).
\]
For $e\in\mathbb R^d$ and $(t,X_t)\in\Lambda_T^d$, define
the vertical perturbation
\[
X_t^e
:=
X_t+e\,\mathbf 1_{[t,T]}.
\]
For a non-anticipative functional
$F:\Lambda_T^d\to E$ and $e\in\mathbb R^d$ we denote
\[
g_{t,X}(e):=F(t,X_t^e)=F(t,X_t+e\,\mathbf 1_{[t,T]}).
\]

\begin{definition}[Vertical derivatives]
\label{def:vertical-derivatives}
The functional $F$ is vertically differentiable at $(t,X_t)$
if $g_{t,X}$ is Fr\'echet differentiable at $0$. Its vertical
derivative \cite{dupire2019,CF10A} is
\[
\nabla_\omega F(t,X)
:=
Dg_{t,X}(0)
\in\operatorname{Lin}(\mathbb R^d,E).
\]

Iterating this construction, whenever the derivatives exist,
we set
\[
\nabla_\omega^jF(t,X)
:=
D^jg_{t,X}(0)
\in
\operatorname{Lin}
\bigl((\mathbb R^d)^{\otimes j},E\bigr),
\qquad j\ge1.
\]
\end{definition}
Since $g_{t,X}$ is a map between finite-dimensional spaces,
whenever its $j$-th Fr\'echet derivative exists,
\[
\nabla_\omega^jF(t,X)
\in
\operatorname{Lin}_{\mathrm{sym}}
\bigl((\mathbb R^d)^{\otimes j},E\bigr).
\]
Thus higher vertical derivatives are symmetric in their
vertical variables.
A non-anticipative functional
$G:\Lambda_T^d\to E$ is called boundedness-preserving if,
for every compact $K\subset\mathbb R^d$ and every
$t_0\in[0,T]$, there exists $C=C(K,t_0)$ such that whenever
$$
\forall t\in[0,t_0],
\qquad
X([0,t])\subset K\qquad{\rm we\  have}\quad \|G(t,X)\|_E\le C(K,t_0).$$
We denote this class by $B(\Lambda_T^d;E)$.

\subsection{Regularity concepts for non-anticipative functionals}\label{sec.regularity}
Regularity concepts for non-anticipative functionals have been developed in \cite{ananova2017,CF10B,cont2012,cont2019}. Here we recall some definitions and add some new ones which are tailored to the results in the sequel.
\begin{definition}
\label{def:C0p}
For $p\in\mathbb N$, we write $
F\in C_b^{0,p}(\Lambda_T^d;E)$
if $F$ is $p$ times vertically differentiable and
\[
\nabla_\omega^jF
\in
C_l^{0,0}
\bigl(
\Lambda_T^d;
\operatorname{Lin}_{\mathrm{sym}}
((\mathbb R^d)^{\otimes j},E)
\bigr)
\cap
B(\Lambda_T^d)
\]
for $j=0,\ldots,p$, with the convention
$\nabla_\omega^0F:=F$.
\end{definition}
\begin{definition}[Uniform supremum-norm Lipschitz continuity]
\label{def:sup-lip}
Let $G:\Lambda_T^d\to E$ be non-anticipative. We say that
$G$ is uniformly Lipschitz with respect to the supremum norm
if there exists $L_G^\infty<\infty$ such that
\begin{equation}
\|G(t,X)-G(t,X')\|_E
\le
L_G^\infty\|X_t-X'_t\|_\infty  \label{eq.uniformLipschitz}  
\end{equation}
for  $t\in[0,T]$ and 
$X,X'\in D([0,T],\mathbb R^d)$.
We write
$
[G]_{\mathrm{Lip}(\|\cdot\|_\infty)}
$
for the smallest constant $L_G^\infty$ satisfying \eqref{eq.uniformLipschitz}.
\end{definition}

\begin{definition}[Uniform horizontal Lipschitz continuity]
\label{def:horizontal-lip}
Let $G:\Lambda_T^d\to E$ be non-anticipative. We say that
$G$ is uniformly horizontally Lipschitz if there exists
$L_G^h<\infty$ such that
\begin{equation}
\|G(t+h,X_t)-G(t,X_t)\|_E
\le L_G^h h \label{eq.hLipschitz}    
\end{equation}
for every $X\in D([0,T],\mathbb R^d)$ and
$0\le t\le t+h\le T$.
We denote this class by
$
h\mathrm{Lip}(\Lambda_T^d;E)
$
and denote
$
[G]_{h\mathrm{Lip}}
$
for the smallest constant $L_G^\infty$ satisfying \eqref{eq.hLipschitz}.
\end{definition}
A non-anticipative functional $G:\Lambda_T^d\to E$ is
left-continuous if
\[
d_\infty\left((t_n,X^n_{t_n}),\ (t,X_t)\right)\mathop{\longrightarrow}^{t_n\uparrow t}_{t_n< t} 0\quad{\rm implies}\quad G(t_n,X^n_{t_n})\to G(t,X_t).
\]
We denote this class by $C_l^{0,0}(\Lambda_T^d;E)$.
We shall also need the following stronger notion.
\begin{definition}[Strong left continuity]
\label{def:strong-left-continuity}
We denote $
C_s^{0,0}(\Lambda_T^d;E).$ the class of non-anticipative functionals
$G:\Lambda_T^d\to E$ with the following property:
for every
$(t,X_t)\in\Lambda_T^d$ and every sequence
\[
(r_n,X^n_{r_n})\in\Lambda_T^d,
\qquad
r_n\le t,\qquad r_n\longrightarrow t,
\]
such that
\begin{eqnarray*}
\sup_n\|X^n_{r_n}\|_\infty<\infty,\qquad
X^n_{r_n}(u)\longrightarrow X_t(u),
\qquad u\in[0,t],
\\
\forall t'<t,\qquad 
\sup_{u\in[0,t']}
|X^n_{r_n}(u)-X_t(u)|
\longrightarrow0
\end{eqnarray*}
we have
\[
G(r_n,X^n_{r_n})
\longrightarrow
G(t,X_t).
\]
\end{definition}
Strong left continuity is used below to pass to limits along
continuous paths which approximate a vertical perturbation by
concentrating the perturbation on a shrinking interval immediately
preceding the current time.
\subsection{Controlled paths and rough integration}
\label{subsec:controlled-paths}

We finally fix the rough-path notation used in the sequel.
We use the standard notions of geometric rough path, controlled
path and rough integral; see
\cite{GUBINELLI200486,frizhairer,hairerkelly,lyonsqian} for details.

Let $\gamma\in(0,1)$ and set
\[
N:=\lfloor 1/\gamma\rfloor .
\]
Let $
\mathbf X
=
(1,\mathbf X^1,\ldots,\mathbf X^N)$
for a step-$N$ geometric $\gamma$-H\"older rough path over
$X\in C^\gamma([0,T],\mathbb R^d)$, with
\[
\mathbf X^1_{s,t}=X_{s,t}:=X(t)-X(s).
\]
Thus
\[
\|\mathbf X^j_{s,t}\|
\lesssim |t-s|^{j\gamma},
\qquad j=1,\ldots,N,\quad{\rm and}\quad
\mathbf X_{s,t}
=
\mathbf X_{s,u}\otimes\mathbf X_{u,t},
\qquad 0\le s\le u\le t\le T.
\]
We shall also use the standard identity
\begin{equation}
\operatorname{Sym}\mathbf X^j_{s,t}
=
\frac1{j!}X_{s,t}^{\otimes j},
\qquad j=1,\ldots,N.
\label{eq:sym-geometric-rough-path}
\end{equation}

For
\[
A\in
\operatorname{Lin}
\bigl((\mathbb R^d)^{\otimes(\ell+j)},E\bigr),
\qquad
z\in(\mathbb R^d)^{\otimes j},
\]
we denote by
\[
A[z]\in
\operatorname{Lin}
\bigl((\mathbb R^d)^{\otimes\ell},E\bigr)\qquad A[z_1,...z_j](v_1,...,v_{\ell})=A(v_1,...,v_{\ell},z_1,...z_j)
\]
the contraction of the last $j$ variables of $A$ against $z$.
In particular, if $A$ is symmetric in these $j$ variables,
then \eqref{eq:sym-geometric-rough-path} gives
\begin{equation}
A[\mathbf X^j_{s,t}]
=
\frac1{j!}A[X_{s,t}^{\otimes j}].
\label{eq:symmetric-contraction}
\end{equation}

\begin{definition}[Controlled path]
\label{def:controlled-path}
Let $\mathbf X$ be as above and let $E$ be a finite-dimensional
normed space. An $E$-valued path $Y^0$ is said to be controlled
by $\mathbf X$ if there exist paths
\[
Y^\ell:
[0,T]\longrightarrow
\operatorname{Lin}
\bigl((\mathbb R^d)^{\otimes\ell},E\bigr),
\qquad
\ell=1,\ldots,N-1,
\]
such that, with $Y^0:=Y$, the remainders
\begin{equation}
R^\ell_{s,t}
:=
Y^\ell_t-Y^\ell_s
-
\sum_{j=1}^{N-1-\ell}
Y^{\ell+j}_s[\mathbf X^j_{s,t}],
\qquad
\ell=0,\ldots,N-1,
\label{eq:controlled-remainder-definition}
\end{equation}
where the sum is understood to be empty for $\ell=N-1$,
satisfy
\begin{equation}
\|R^\ell_{s,t}\|
\lesssim
|t-s|^{(N-\ell)\gamma},
\qquad
\ell=0,\ldots,N-1.
\label{eq:standard-controlled-remainders}
\end{equation}
 $Y^j$ is known as the $j$-th Gubinelli coefficient (or "Gubinelli
derivative") of $Y$.
\end{definition}
This definition requires H\"older regularity which increases  linearly across levels: the
$j$-th rough-path level has H\"older regularity $j\gamma$, while the
remainder \eqref{eq:standard-controlled-remainders} at level $\ell$ has regularity
$(N-\ell)\gamma.$ We refer to this as the  {\it grading} property.
For example, when $\gamma\in(1/3,1/2]$, one has $N=2$ and
Definition~\ref{def:controlled-path} reduces to
\begin{equation}
Y_t-Y_s
=
Y'_s[X_{s,t}]
+
R^Y_{s,t},
\qquad
\|R^Y_{s,t}\|
\lesssim |t-s|^{2\gamma},
\label{eq:first-order-controlled-path}
\end{equation}
with
\[
\|Y'_t-Y'_s\|
\lesssim |t-s|^\gamma.
\]
This is the ``classical controlled remainder'' referred to
below.

The expansions obtained below for path-dependent functionals
have the same Taylor-type algebraic structure, but in general
their remainder exponents are not given by this linear grading \cite{ananova2017,ananova2023,Bielert2026}.

Finally, we recall the {\it Sewing Lemma} \cite[Lemma 4.2]{frizhairer}. For a
two-parameter map $\Xi:\Delta_T\to E$, set
\[
\delta\Xi_{s,u,t}
:=
\Xi_{s,t}-\Xi_{s,u}-\Xi_{u,t}.
\]
If, for some $\theta>1$,
\[
\|\delta\Xi_{s,u,t}\|
\lesssim |t-s|^\theta,
\]
then there exists a unique additive map $I:\Delta_T\to E$
such that
\[
I_{s,t}
=
\lim_{|\pi|\to0}
\sum_{[u,v]\in\pi}\Xi_{u,v}\qquad{\rm and}\qquad \|I_{s,t}-\Xi_{s,t}\|
\lesssim |t-s|^\theta.
\]
In particular, if $Y$ is a controlled one-form with
coefficients $Y^0,\ldots,Y^{N-1}$, the rough integral is
obtained by sewing
\begin{equation}
\Xi_{s,t}
=
\sum_{\ell=0}^{N-1}
Y^\ell_s[\mathbf X^{\ell+1}_{s,t}],
\label{eq:controlled-rough-integral-approximant}
\end{equation}
where we have used the  identification
\[
\operatorname{Lin}
\bigl(
(\mathbb R^d)^{\otimes\ell},
\operatorname{Lin}(\mathbb R^d,E)
\bigr)
\simeq
\operatorname{Lin}
\bigl(
(\mathbb R^d)^{\otimes(\ell+1)},E
\bigr).
\]
\section{Regular functionals as controlled paths} \label{sec.expansion}

The first-order relation between functional derivatives and
controlled paths recalled in Section~\ref{subsec:controlled-paths} raises two 
questions. First, under the above assumptions, does the hierarchy of vertical derivatives admit a
higher-order controlled expansion along a H\"older path? 
Conversely, when does a compatible family of controlled
coefficients coincide with the  vertical
derivatives of its base component?

We address the first question in this section.
We  study the direct passage from non-anticipative
functional calculus to controlled rough paths. Under weak
horizontal regularity, we establish higher-order controlled
Taylor expansions whose coefficients are the vertical
derivatives. We then determine when these estimates are
sufficient for rough integration and identify a
regime in which the classical controlled
remainder is recovered.

The approximation argument developed below originates in the
first-order estimate of ~\cite[Lemma~2.2]{ananova2017},
where a H\"older path is approximated by paths of bounded
variation and the approximation scale is optimized by balancing
the resulting error terms.  Bielert~\cite{Bielert2026} recently
extended this strategy to higher-order functional Taylor
expansions under stronger horizontal differentiability assumptions.
Here, in contrast to \cite{Bielert2026}, no horizontal (time) differentiability
 nor any mixed differentiability e.g. $D\nabla_\omega^jF$, is required.
The resulting loss in the remainder exponent quantifies the
price paid for dispensing with  horizontal differentiability.

\subsection{Functional Taylor expansion under weak horizontal regularity}
\label{sec:weak-horizontal-Taylor}

We now establish the higher-order counterpart of the
first-order estimate recalled in Section~\ref{sec:functional-calculus}. Throughout this
subsection, horizontal regularity is assumed only in the
Lipschitz sense introduced above; in particular, no 
horizontal/time derivatives are required.

We will  use  the following lemma which gives us a convenient way to express the changes of a regular non-anticipative functional on a piecewise constant path. 

\begin{lemma}[Taylor estimate on piecewise-constant paths]
\label{lem:pc-taylor}
Let $p\in\mathbb N$, let
\[
E_j:=\operatorname{Lin}_{\mathrm{sym}}
\bigl((\mathbb R^d)^{\otimes j},\mathbb R^v\bigr),
\qquad E_0:=\mathbb R^v,
\]
and $
F\in C_b^{0,p}(\Lambda_T^d;\mathbb R^v)$ with
\[
\nabla_\omega^jF \in h\operatorname{Lip}(\Lambda_T^d;E_j)
\cap
\operatorname{Lip}(\Lambda_T^d,\|\cdot\|_\infty;E_j),
\qquad j=0,\ldots,p.
\]
Let $0\le s<t\le T$ and assume  $Y\in D([0,T],\mathbb R^d)$ is piecewise constant on $\pi=\{s=t_0<t_1<\cdots<t_N=t\}.$
Let
\[
z_i:=Y(t_{i+1})-Y(t_i),
\qquad
V:=\sum_{i=0}^{N-1}|z_i|,
\qquad 
\Omega:=\sup_{r\in[s,t]}|Y(r)-Y(s)|.
\]
Denoting $G_j:=\nabla_\omega^jF$, $j=0,\ldots,p$, for every $\ell=0,\ldots,p$, with $n:=p-\ell$, we have
\[
G_\ell(t,Y_t)-G_\ell(s,Y_s)
=
\sum_{j=1}^{n}
\frac1{j!}
G_{\ell+j}(s,Y_s)
[Y_{s,t}^{\otimes j}]
+
\mathcal E^\ell_{s,t},
\]
where the sum is empty if $\ell=p$, and
\[
\|\mathcal E^\ell_{s,t}\|_{E_\ell}
\le
C_{\ell,p}
\left(
|t-s|\sum_{m=0}^{n}V^m
+
\Omega V^n
\right)
\]
where $C_{\ell,p}$ depends only on $p,\ell$ and on the horizontal
and sup-norm Lipschitz constants of
$G_\ell,\ldots,G_p$.
\end{lemma}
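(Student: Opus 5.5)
The plan is to argue by downward induction on $\ell$, from $\ell=p$ to $\ell=0$, so that $n=p-\ell$ increases from $0$ to $p$. The basic device is to split each step of the piecewise-constant path $Y$ into a horizontal move and a vertical jump. Write $\bar Y^{(i)}:=Y_{t_i}$ for the path stopped at $t_i$ and frozen thereafter. Since $Y$ is constant on $[t_i,t_{i+1})$ and jumps by $z_i$ at $t_{i+1}$, we have
\[
Y_{t_{i+1}}=\bigl(\bar Y^{(i)}\bigr)^{z_i}_{t_{i+1}}
=\bar Y^{(i)}+z_i\mathbf 1_{[t_{i+1},T]},
\]
that is, a vertical perturbation of the frozen path at time $t_{i+1}$. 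Hence every increment $G_\ell(t_{i+1},Y_{t_{i+1}})-G_\ell(t_i,Y_{t_i})$ is the sum of two pieces. The first is a horizontal increment $G_\ell(t_{i+1},\bar Y^{(i)})-G_\ell(t_i,\bar Y^{(i)})$, bounded by $[G_\ell]_{h\mathrm{Lip}}(t_{i+1}-t_i)$. The second is a vertical increment $g(z_i)-g(0)$, where $g(e)=G_\ell(t_{i+1},(\bar Y^{(i)})^e)$.

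\emph{Base case} $\ell=p$, $n=0$. Compare $(t,Y_t)$ with $(t,Y_s)$, the path frozen at $s$ and viewed at time $t$. Their supremum distance is at most $\Omega$, and $(t,Y_s)$ differs from $(s,Y_s)$ only horizontally. This gives $\|\mathcal E^p_{s,t}\|\le [G_p]_{h\mathrm{Lip}}|t-s|+[G_p]_{\mathrm{Lip}(\|\cdot\|_\infty)}\Omega$, which is the claimed bound with $n=0$.

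\emph{Inductive step.} Assume the statement holds at all levels $\ell+j$, $j\ge1$, on every subinterval $[s,t_i]$. The quantities $V$ and $\Omega$ for $[s,t_i]$ are dominated by those for $[s,t]$.
\begin{enumerate}
\item Telescope over $i$. Expand each vertical increment by Taylor's formula in $e$ up to order $n$, using $D^jg(0)=G_{\ell+j}(t_{i+1},\bar Y^{(i)})$. The remainder involves $G_p$ at intermediate points minus $G_p(t_{i+1},\bar Y^{(i)})$. By the sup-norm Lipschitz property of $G_p$ this remainder is $\lesssim |z_i|^{n+1}\le 2\Omega|z_i|^{n}$. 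Summing over $i$ gives a total of at most $2\Omega V^n$.
\item Replace $G_{\ell+j}(t_{i+1},\bar Y^{(i)})$ by $G_{\ell+j}(t_i,Y_{t_i})$, at horizontal cost $(t_{i+1}-t_i)|z_i|^j$. The sum of these costs is at most $|t-s|V^j$. Together with the first-order horizontal pieces, which sum to at most $C|t-s|$, this gives terms of the form $|t-s|\sum_m V^m$.
\item Apply the induction hypothesis at level $\ell+j$ on $[s,t_i]$, to order $n-j$:
\[
G_{\ell+j}(t_i,Y_{t_i})=\sum_{k=0}^{n-j}\frac1{k!}G_{\ell+j+k}(s,Y_s)\bigl[Y_{s,t_i}^{\otimes k}\bigr]+\mathcal E^{\ell+j}_{s,t_i}.
\]
Contracting the error against $z_i^{\otimes j}/j!$ and summing, using $\sum_i|z_i|^j\le V^j$, produces errors of order $|t-s|\sum_m V^{m+j}+\Omega V^{n}$, which is of the claimed form.
\item Reorganize the main terms by $m=j+k\le n$. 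We must show that
\[
\sum_i\sum_{j+k=m,\;j\ge1}\frac{1}{k!\,j!}\,G_{\ell+m}(s,Y_s)\bigl[Y_{s,t_i}^{\otimes k}\otimes z_i^{\otimes j}\bigr]=\frac1{m!}G_{\ell+m}(s,Y_s)\bigl[Y_{s,t}^{\otimes m}\bigr].
\]
Because $G_{\ell+m}$ is symmetric, the binomial theorem shows that the inner sum equals $\frac1{m!}G_{\ell+m}(s,Y_s)\bigl[(Y_{s,t_i}+z_i)^{\otimes m}-Y_{s,t_i}^{\otimes m}\bigr]$. Since $Y_{s,t_{i+1}}=Y_{s,t_i}+z_i$, the sum over $i$ telescopes to $\frac1{m!}G_{\ell+m}(s,Y_s)[Y_{s,t}^{\otimes m}]$, so the identity is exact. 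Every coefficient $G_{\ell+m}$ with $m\le n$ is therefore produced exactly once, and nothing beyond order $n$ needs to be discarded.
\end{enumerate}

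I expect the main obstacle to be bookkeeping rather than any deep point. There are two places to be careful. First, in the Taylor step one must place the jump correctly at $t_{i+1}$, since the path is c\`adl\`ag, so that the vertical perturbation is taken exactly at the time where the functional is evaluated. Second, one must track the exponents so that the induction errors at level $\ell+j$, after multiplication by $|z_i|^j$ and summation, collapse into $|t-s|\sum_{m\le n}V^m+\Omega V^n$. Here it is essential to use $\Omega$ and $V$ for the full interval $[s,t]$, and to absorb $|z_i|\le 2\Omega$ into the $\Omega V^n$ term. The constants depend only on the Lipschitz constants of $G_\ell,\dots,G_p$ and on combinatorial factors, as claimed.
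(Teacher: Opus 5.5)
Your proposal is correct and follows essentially the same argument as the paper. It uses induction on $n=p-\ell$ with the same base case, and the same horizontal/vertical split of each step of the piecewise-constant path. The vertical jump is Taylor-expanded with remainder controlled by the sup-norm Lipschitz constant of $G_p$ together with $|z_i|^{n+1}\le 2\Omega|z_i|^n$. Then $G_{\ell+j}(t_{i+1},Y_{t_i})$ is replaced by $G_{\ell+j}(t_i,Y_{t_i})$ at horizontal cost, the induction hypothesis is applied on $[s,t_i]$, and the principal terms telescope exactly via the binomial identity.
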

\begin{proof}
Set
$
G_j:=\nabla_\omega^jF,
j=0,\ldots,p,$
with the convention \(G_0:=F\). For \(i=0,\ldots,N-1\), define
$
z_i:=Y(t_{i+1})-Y(t_i),
$
and, for \(i=0,\ldots,N\),
\[
S_i:=Y(t_i)-Y(s)=\sum_{r=0}^{i-1}z_r,
\qquad
V_i:=\sum_{r=0}^{i-1}|z_r|,
\]
with \(S_0=0\) and \(V_0=0\). Thus
\[
S_N=Y(t)-Y(s)=Y_{s,t},
\qquad
V_N=V.
\]
We also set $
\Omega_i:=\sup_{r\in[s,t_i]}|Y(r)-Y(s)|,$
so that
\[
|S_i|\le \Omega_i\le \Omega,
\qquad
V_i\le V.
\]
We prove the assertion by induction on
$
n:=p-\ell.
$
Suppose first that \(n=0\), i.e. \(\ell=p\). Then
\[
\begin{aligned}
G_p(t,Y_t)-G_p(s,Y_s)
=&
G_p(t,Y_t)-G_p(t,Y_s)
+
G_p(t,Y_s)-G_p(s,Y_s).
\end{aligned}
\]
Since
$
\|Y_t-Y_s\|_\infty\le \Omega,$
the supremum-norm Lipschitz continuity of \(G_p\) gives
\[
\|G_p(t,Y_t)-G_p(t,Y_s)\|
\le L_p^\infty \Omega,
\]
whereas the horizontal Lipschitz continuity gives
\[
\|G_p(t,Y_s)-G_p(s,Y_s)\|
\le L_p^h|t-s|.
\]
Hence
\[
\|G_p(t,Y_t)-G_p(s,Y_s)\|
\le
L_p^h|t-s|+L_p^\infty\Omega,
\]
which proves the assertion for \(n=0\).
Now let \(n\ge1\), set \(\ell=p-n\), and assume that the result holds at the levels
$
\ell+1,\ldots,p.
$
Thus, for every \(j=1,\ldots,n\) and every \(i=0,\ldots,N\),
\begin{equation}
\begin{aligned}
G_{\ell+j}(t_i,Y_{t_i})
={}&
\sum_{r=0}^{n-j}
\frac1{r!}
G_{\ell+j+r}(s,Y_s)
[S_i^{\otimes r}]
+
E_i^{\ell+j},
\end{aligned}
\label{eq.1}
\end{equation}
where
\begin{equation}
\|E_i^{\ell+j}\|
\le
C
\left(
(t_i-s)\sum_{m=0}^{n-j}V_i^m
+
\Omega_iV_i^{n-j}
\right).
\label{eq.2}
\end{equation}

For each \(i=0,\ldots,N-1\), write
\begin{equation}
\begin{aligned}
&G_\ell(t_{i+1},Y_{t_{i+1}})
-
G_\ell(t_i,Y_{t_i})
=
\underbrace{
G_\ell(t_{i+1},Y_{t_i})
-
G_\ell(t_i,Y_{t_i})
}_{=:H_i^\ell}
+
\underbrace{
G_\ell(t_{i+1},Y_{t_{i+1}})
-
G_\ell(t_{i+1},Y_{t_i})
}_{=:V_i^\ell}.
\end{aligned}
\label{eq.3}
\end{equation}
By horizontal Lipschitz continuity,
\begin{equation}
\|H_i^\ell\|
\le
L_\ell^h(t_{i+1}-t_i).
\label{eq.4}
\end{equation}

Since \(Y\) is constant on \([t_i,t_{i+1})\), the stopped paths
\(Y_{t_i}\) and \(Y_{t_{i+1}}\), when viewed at time \(t_{i+1}\),
differ by the vertical perturbation
$
z_i\mathbf 1_{[t_{i+1},T]}.$
Hence, defining
\[
\phi_i(h)
:=
G_\ell
\bigl(
t_{i+1},
Y_{t_i}+hz_i\mathbf 1_{[t_{i+1},T]}
\bigr),
\qquad h\in[0,1],
\]
we have, for \(j=1,\ldots,n\),
\[
\phi_i^{(j)}(h)
=
G_{\ell+j}
\bigl(
t_{i+1},
Y_{t_i}+hz_i\mathbf 1_{[t_{i+1},T]}
\bigr)
[z_i^{\otimes j}].
\]
By Taylor's formula,
\begin{equation}
V_i^\ell
=
\sum_{j=1}^{n}
\frac1{j!}
G_{\ell+j}(t_{i+1},Y_{t_i})
[z_i^{\otimes j}]
+
r_i,
\label{eq.5}
\end{equation}
where, using the supremum-norm Lipschitz continuity of \(G_p\),
\begin{equation}
\|r_i\|
\le
C|z_i|^{n+1}.
\label{eq.6}
\end{equation}

Next, for \(j=1,\ldots,n\), write
\begin{equation}
G_{\ell+j}(t_{i+1},Y_{t_i})
=
G_{\ell+j}(t_i,Y_{t_i})
+
K_{i,j},
\qquad 
\|K_{i,j}\|
\le
L_{\ell+j}^h(t_{i+1}-t_i).
\label{eq.7}
\end{equation}
Substituting \eqref{eq.7} and then the induction hypothesis
\eqref{eq.1} into \eqref{eq.5}, we obtain
\begin{equation}
\begin{aligned}
V_i^\ell
={}&
\sum_{j=1}^{n}
\frac1{j!}
\sum_{r=0}^{n-j}
\frac1{r!}
G_{\ell+j+r}(s,Y_s)
[S_i^{\otimes r},z_i^{\otimes j}]
\\
&+
\sum_{j=1}^{n}
\frac1{j!}
E_i^{\ell+j}[z_i^{\otimes j}]
+
\sum_{j=1}^{n}
\frac1{j!}
K_{i,j}[z_i^{\otimes j}]
+
r_i.
\end{aligned}
\label{eq.9}
\end{equation}
We now sum over \(i\). Consider first the principal part, namely
the first line of \eqref{eq.9}. For fixed
\(q\in\{1,\ldots,n\}\), the contribution of all terms with
\(j+r=q\) is
\begin{equation}
\sum_{i=0}^{N-1}
\sum_{j=1}^{q}
\frac1{j!(q-j)!}
G_{\ell+q}(s,Y_s)
[S_i^{\otimes(q-j)},z_i^{\otimes j}].
\label{eq.10}
\end{equation}
Since \(G_{\ell+q}(s,Y_s)\) is symmetric in the contracted
vertical variables and
$
S_{i+1}=S_i+z_i,
$
the binomial identity gives
\begin{equation}
\begin{aligned}
&\sum_{j=1}^{q}
\frac1{j!(q-j)!}
G_{\ell+q}(s,Y_s)
[S_i^{\otimes(q-j)},z_i^{\otimes j}]
=
\frac1{q!}
G_{\ell+q}(s,Y_s)
\bigl[
S_{i+1}^{\otimes q}-S_i^{\otimes q}
\bigr].
\end{aligned}
\label{eq.11}
\end{equation}
Summing \eqref{eq.11} over \(i\) therefore yields
\[
\frac1{q!}
G_{\ell+q}(s,Y_s)
[S_N^{\otimes q}],
\]
because \(S_0=0\). Since \(S_N=Y_{s,t}\), the principal terms
sum exactly to
\begin{equation}
\sum_{q=1}^{n}
\frac1{q!}
G_{\ell+q}(s,Y_s)
[Y_{s,t}^{\otimes q}].
\label{eq.12}
\end{equation}

It remains to estimate the error terms. By \eqref{eq.4},
\begin{equation}
\sum_{i=0}^{N-1}\|H_i^\ell\|
\le
L_\ell^h|t-s|.
\label{eq.13}
\end{equation}
Using \eqref{eq.7},
\begin{equation}
\begin{aligned}
\sum_{i=0}^{N-1}
\sum_{j=1}^{n}
\|K_{i,j}[z_i^{\otimes j}]\|
&\le
C
\sum_{i=0}^{N-1}
(t_{i+1}-t_i)
\sum_{j=1}^{n}|z_i|^j
\le
C|t-s|
\sum_{j=1}^{n}V^j.
\end{aligned}
\label{eq.14}
\end{equation}
We next estimate the terms involving the induction remainders.
By \eqref{eq.2},
\begin{equation}
\begin{aligned}
&\sum_{i=0}^{N-1}
\sum_{j=1}^{n}
\|E_i^{\ell+j}\|\,|z_i|^j
\le
C
\sum_{j=1}^{n}
\sum_{i=0}^{N-1}
(t_i-s)
\left(
\sum_{m=0}^{n-j}V_i^m
\right)|z_i|^j
+
C
\sum_{j=1}^{n}
\sum_{i=0}^{N-1}
\Omega_iV_i^{n-j}|z_i|^j.
\end{aligned}
\label{22}
\end{equation}
For non-negative numbers \(a_i:=|z_i|\), we have
$$
V_i=\sum_{r<i}a_r,
\qquad
V=\sum_i a_i,
$$
and therefore, for \(m\ge0\) and \(j\ge1\),
\begin{equation}
\sum_i V_i^m a_i^j
\le
V^{m+j}.
\label{23}
\end{equation}
Indeed,
\[
V_i^m\le V^m,
\qquad
a_i^j\le V^{j-1}a_i,
\]
and hence
\[
\sum_iV_i^ma_i^j
\le
V^{m+j-1}\sum_i a_i
=
V^{m+j}.
\]
Since \(t_i-s\le t-s\), \eqref{23} implies
\begin{equation}
\begin{aligned}
&\sum_{j=1}^{n}
\sum_{i=0}^{N-1}
(t_i-s)
\left(
\sum_{m=0}^{n-j}V_i^m
\right)|z_i|^j \le
C|t-s|
\sum_{j=1}^{n}
\sum_{m=0}^{n-j}
V^{m+j}
\le
C|t-s|
\sum_{q=1}^{n}V^q.
\end{aligned}
\label{24}
\end{equation}
Similarly, since \(\Omega_i\le\Omega\),
\begin{equation}
\sum_{j=1}^{n}
\sum_{i=0}^{N-1}
\Omega_iV_i^{n-j}|z_i|^j
\le
C\Omega V^n.
\label{25}
\end{equation}
Combining \eqref{22}-\eqref{24}-\eqref{25},
\begin{equation}
\sum_{i=0}^{N-1}
\sum_{j=1}^{n}
\|E_i^{\ell+j}\|\,|z_i|^j
\le
C
\left(
|t-s|\sum_{q=1}^{n}V^q
+
\Omega V^n
\right).
\label{26}
\end{equation}
Finally, by \eqref{eq.6},
\begin{equation}
\sum_{i=0}^{N-1}\|r_i\|
\le
C\sum_{i=0}^{N-1}|z_i|^{n+1}.
\label{eq.20}
\end{equation}
Since
$
z_i=S_{i+1}-S_i,$
we have
$
|z_i|
\le
|S_{i+1}|+|S_i|
\le
2\Omega.$
Hence
\begin{equation}
\sum_{i=0}^{N-1}|z_i|^{n+1}
\le
2\Omega\sum_{i=0}^{N-1}|z_i|^n
\le
2\Omega V^n,
\label{eq.21}
\end{equation}
and consequently
\begin{equation}
\sum_{i=0}^{N-1}\|r_i\|
\le
C\Omega V^n.
\label{29}
\end{equation}

Summing \eqref{eq.3} over \(i\), using the exact identity
\eqref{eq.12}, and combining the bounds
\eqref{eq.13}, \eqref{eq.14}, \eqref{26}, and \eqref{29}, we obtain
\[
\begin{aligned}
&\left\|
G_\ell(t,Y_t)-G_\ell(s,Y_s)
-
\sum_{q=1}^{n}
\frac1{q!}
G_{\ell+q}(s,Y_s)
[Y_{s,t}^{\otimes q}]
\right\|
\le
C_{\ell,p}
\left(
|t-s|\sum_{m=0}^{n}V^m
+
\Omega V^n
\right).
\end{aligned}
\]
This proves the induction step and completes the proof.
\end{proof}

Using the results of the previous two lemmas, we can now prove the following Taylor expansion for non-anticipative functionals.
\begin{proposition}[Higher-order functional Taylor expansion]
\label{prop:functional-taylor-expansion}
Let $p\in\mathbb{N}$ and $\gamma\in(0,1)$. For
$j=0,\ldots,p$, set
\[
E_j
:=
\operatorname{Lin}_{\mathrm{sym}}
\big((\mathbb{R}^d)^{\otimes j},\mathbb{R}^v\big),
\qquad
E_0:=\mathbb{R}^v.
\]
Let
$F\in C_b^{0,p}(\Lambda_T^d,\mathbb R^v),$
and, with the convention $\nabla_\omega^0F:=F$, assume that
for $j=0,\ldots,p$,
\[
\nabla_\omega^jF
\in
h\operatorname{Lip}(\Lambda_T^d;E_j)
\cap
\operatorname{Lip}
(\Lambda_T^d,\|\cdot\|_\infty;E_j).
\]
Denote the corresponding Lipschitz constants by
$
L_j^h
:=
[\nabla_\omega^jF]_{h\operatorname{Lip}},
L_j^\infty
:=
[\nabla_\omega^jF]_{\operatorname{Lip}(\|\cdot\|_\infty)}.$\\
For $A\in E_{\ell+j}$ and
$h\in\mathbb{R}^d$, denote $
A[h^{\otimes j}]\in E_\ell $  the partial contraction
\[
A[h^{\otimes j}]
(v_1,\ldots,v_\ell)
:=
A(v_1,\ldots,v_\ell,
  \underbrace{h,\ldots,h}_{j\text{ times}}).
\]

Let $X\in C^\gamma([0,T],\mathbb{R}^d)$.
For $\ell\in\{0,\ldots,p-1\}$ denote
\[
n_\ell:=p-\ell,
\quad
D_\ell
:=
n_\ell(1-\gamma)+\gamma,\quad
q_\ell
:=
\frac{n_\ell+\gamma}{D_\ell},
\quad
\beta_\ell
:=
\gamma q_\ell
=
\frac{\gamma(n_\ell+\gamma)}
     {n_\ell(1-\gamma)+\gamma}.
\]
Then $q_\ell>1$ and, for every $0\le s<t\le T$
such that
$
\rho_{s,t}
:=
\|X\|_{\gamma;[s,t]}|t-s|^\gamma
\le 1,$
we have
\begin{align}
\nabla_\omega^\ell F(t,X_t)
-
\nabla_\omega^\ell F(s,X_s)
&=
\sum_{j=1}^{p-\ell}
\frac{1}{j!}
\nabla_\omega^{\ell+j}F(s,X_s)
[X_{s,t}^{\otimes j}]
+
R^\ell_{s,t},
\label{eq:functional-taylor-expansion}
\end{align}
where
\begin{equation}
\|R^\ell_{s,t}\|_{E_\ell}
\le
C_{\ell,F,T}
\left(
|t-s|
+
\|X\|_{\gamma;[s,t]}^{q_\ell}
|t-s|^{\beta_\ell}
\right)=C_{\ell,F,T}
\left(
|t-s|+\rho_{s,t}^{q_\ell}
\right).
\label{eq:functional-Taylor-uniform-remainder}
\end{equation}
Here $C_{\ell,F,T}$ is independent of $X,s,t$ and may be
chosen to depend only on $p,\gamma,d,v,T$ and on the
Lipschitz constants
$
\{L_j^h,L_j^\infty:0\le j\le p\}.$
For $\ell=p$, one has
\[
\|\nabla_\omega^pF(t,X_t)-\nabla_\omega^pF(s,X_s)\|_{E_p}
\le
L_p^h|t-s|
+
L_p^\infty
\|X\|_{\gamma;[s,t]}|t-s|^\gamma .
\]
In particular,
\[
\big(
F(\cdot,X),
\nabla_\omega F(\cdot,X),
\ldots,
\nabla_\omega^pF(\cdot,X)
\big)
\]
is a controlled family with respect to $X$,
with remainder exponents $\beta_\ell$,
$\ell=0,\ldots,p-1$.
\end{proposition}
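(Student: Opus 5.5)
The plan is to reduce to Lemma~\ref{lem:pc-taylor} by replacing $X$ on $[s,t]$ with a piecewise-constant approximation, and then to choose the number of pieces so that the approximation error balances the growth of the variation. Fix $0\le s<t\le T$ with $\rho:=\rho_{s,t}\le1$ and an integer $N\ge1$. Let $t_i:=s+i(t-s)/N$ and define $Y\in D([0,T],\mathbb R^d)$ as follows:
\[
Y(r):=X(r)\ \text{for } r\le s,\qquad Y(r):=X(t_i)\ \text{for } r\in[t_i,t_{i+1}),\qquad Y(r):=X(t)\ \text{for } r\ge t .
\]
Then $Y_s=X_s$, $Y_{s,t}=X_{s,t}$, and $Y$ is piecewise constant on the partition $\{t_i\}$. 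The H\"older bound then gives
\[
|z_i|\le \|X\|_{\gamma;[s,t]}\Big(\frac{t-s}{N}\Big)^\gamma=\rho N^{-\gamma},\qquad
V\le \rho N^{1-\gamma},\qquad \Omega\le \rho,\qquad \|X_t-Y_t\|_\infty\le \rho N^{-\gamma}.
\]

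Write $n=n_\ell$. The coefficients $\nabla_\omega^{\ell+j}F(s,X_s)=\nabla_\omega^{\ell+j}F(s,Y_s)$ and the increments $X_{s,t}^{\otimes j}=Y_{s,t}^{\otimes j}$ coincide for $X$ and $Y$. Hence $R^\ell_{s,t}$ differs from the remainder $\mathcal E^\ell_{s,t}$ of Lemma~\ref{lem:pc-taylor} (applied to $Y$) only by $\nabla^\ell_\omega F(t,X_t)-\nabla^\ell_\omega F(t,Y_t)$. By sup-norm Lipschitz continuity this difference is at most $L_\ell^\infty\rho N^{-\gamma}$. Combining with the lemma, we get
\[
\|R^\ell_{s,t}\|\le C\Big(|t-s|\sum_{m=0}^{n}\rho^mN^{m(1-\gamma)}+\rho^{n+1}N^{n(1-\gamma)}+\rho N^{-\gamma}\Big).
\]

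Next I would optimize over $N$. Balancing the last two terms means $N^{n(1-\gamma)+\gamma}=\rho^{-n}$, so I take $N:=\lceil\rho^{-n/D_\ell}\rceil$, noting that $1\le N\le 2\rho^{-n/D_\ell}$ when $0<\rho\le1$. Then:
\begin{itemize}
\item The term $\rho N^{-\gamma}$ is at most $\rho^{1+\gamma n/D_\ell}=\rho^{(D_\ell+\gamma n)/D_\ell}=\rho^{q_\ell}$, using $D_\ell+\gamma n=n+\gamma$.
\item The term $\rho^{n+1}N^{n(1-\gamma)}$ is at most $C\rho^{\,n+1-n^2(1-\gamma)/D_\ell}=C\rho^{q_\ell}$, by the same identity.
\item Each horizontal term satisfies $\rho^mN^{m(1-\gamma)}\le C\rho^{m\gamma/D_\ell}\le C$, because $\rho\le1$. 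So the first group is bounded by $C|t-s|$.
\end{itemize}
The degenerate case $\rho=0$ means $X$ is constant on $[s,t]$. There I take $N=1$, so $Y_t=X_t$ and $V=\Omega=0$; equivalently, I let $\rho\downarrow0$ in the bound. This yields \eqref{eq:functional-Taylor-uniform-remainder}, with constants depending only on $p,\gamma,T$ and the Lipschitz constants. The identity $q_\ell>1$ is just $n+\gamma>n(1-\gamma)+\gamma$, i.e.\ $n\gamma>0$.

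The case $\ell=p$ follows directly from the horizontal/vertical splitting used in the base step of Lemma~\ref{lem:pc-taylor}, applied to $X$ itself. The closing claim about the controlled family is then just a restatement of the bounds for $\ell=0,\dots,p$.

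The main point of care, rather than a real obstacle, is the construction of $Y$. It must agree with $X_s$ up to time $s$, jump to exactly $X(t)$ at time $t$ so that the Taylor coefficients and increments are unchanged, and still satisfy $\|X_t-Y_t\|_\infty\le\rho N^{-\gamma}$. The other check is that the ceiling in the choice of $N$ costs only a constant factor, uniformly in $\rho\le1$.
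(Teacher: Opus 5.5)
Your proposal is correct and follows essentially the same route as the paper's proof. It uses the same ingredients: the piecewise-constant approximation on the uniform $N$-partition of $[s,t]$, transfer of the remainder through sup-norm Lipschitz continuity at cost $L_\ell^\infty\rho N^{-\gamma}$, Lemma~\ref{lem:pc-taylor} with $V\le\rho N^{1-\gamma}$ and $\Omega\le\rho$, and the balancing choice $N=\lceil\rho^{-n/D_\ell}\rceil$ with the same exponent computations. The differences are cosmetic: you handle $\rho=0$ via $N=1$ where the paper uses horizontal Lipschitz continuity directly, and you check $q_\ell>1$ explicitly where the paper leaves it implicit.
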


\begin{proof}
Fix $\ell\in\{0,\ldots,p-1\}$ and write
\[
n:=p-\ell,
\qquad
\delta:=t-s,
\qquad
A:=\|X\|_{\gamma;[s,t]},
\qquad
\rho:=A\delta^\gamma.
\]
If $\rho=0$, then $X$ is constant on $[s,t]$, and the assertion
follows immediately from the horizontal Lipschitz continuity of
$\nabla_\omega^\ell F$. Hence we may assume
$
0<\rho\le1.$
Let $
\pi_N=\{t_0,\ldots,t_N\},
t_i:=s+\frac{i}{N}(t-s),$
be the uniform partition of $[s,t]$, and define the
piecewise-constant approximation
\[
X^N(r):=
\begin{cases}
X(r), & r<s,\\
X(t_i), & t_i\le r<t_{i+1},
             \quad i=0,\ldots,N-1,\\
X(r), & r\ge t.
\end{cases}
\]
Then
\[
X^N_s=X_s,
\qquad
X^N(t)=X(t),
\qquad
X^N_{s,t}=X_{s,t},
\]
and
\begin{equation}
\|X^N_t-X_t\|_\infty
\le
A\delta^\gamma N^{-\gamma}
=
\rho N^{-\gamma}.
\label{eq.15}
\end{equation}

Define
\begin{eqnarray}
R^\ell_{s,t}
:=
\nabla_\omega^\ell F(t,X_t)
-\nabla_\omega^\ell F(s,X_s)
-
\sum_{j=1}^{n}
\frac1{j!}
\nabla_\omega^{\ell+j}F(s,X_s)
[X_{s,t}^{\otimes j}],
\label{eq.16}\\
R^{\ell,N}_{s,t}
:=
\nabla_\omega^\ell F(t,X^N_t)
-\nabla_\omega^\ell F(s,X^N_s)
-
\sum_{j=1}^{n}
\frac1{j!}
\nabla_\omega^{\ell+j}F(s,X^N_s)
[(X^N_{s,t})^{\otimes j}].
\label{eq.17}
\end{eqnarray}
Since $
X^N_s=X_s,
X^N_{s,t}=X_{s,t},$
the Taylor polynomials in \eqref{eq.16} and \eqref{eq.17} coincide.
Therefore, by the supremum-norm Lipschitz continuity of
$\nabla_\omega^\ell F$,
\begin{equation}
\|R^\ell_{s,t}-R^{\ell,N}_{s,t}\|_{E_\ell}
\le
L_\ell^\infty \rho N^{-\gamma}.
\label{eq.18}
\end{equation}
We now apply Lemma~\ref{lem:pc-taylor} to $X^N$ on $[s,t]$.
Let
\[
V_N
:=
\sum_{i=0}^{N-1}
|X(t_{i+1})-X(t_i)|,\qquad
\Omega_N
:=
\sup_{r\in[s,t]}
|X^N(r)-X(s)|.
\]
By the $\gamma$-H\"older continuity of $X$,
\[
|X(t_{i+1})-X(t_i)|
\le
A\left(\frac{\delta}{N}\right)^\gamma,
\]
and hence
\begin{equation}
V_N
\le
A\delta^\gamma N^{1-\gamma}
=
\rho N^{1-\gamma}.
\label{eq.19}
\end{equation}
Moreover,
\begin{equation}
\Omega_N
\le
A\delta^\gamma
=
\rho.
\label{eq.20}
\end{equation}

Lemma~\ref{lem:pc-taylor} therefore yields
\begin{equation}
\begin{aligned}
\|R^{\ell,N}_{s,t}\|_{E_\ell}
&\le
C_{\ell,p}
\left(
\delta\sum_{m=0}^{n}V_N^m
+
\Omega_NV_N^n
\right)
\\
&\le
C
\left[
\delta
+
\sum_{m=1}^{n}
\delta\,\rho^mN^{m(1-\gamma)}
+
\rho^{n+1}N^{n(1-\gamma)}
\right].
\end{aligned}
\label{eq.21}
\end{equation}
Combining this with \eqref{eq.18}, we obtain
\[
\|R^\ell_{s,t}\|_{E_\ell}
\le
C
\left[
\delta
+
\sum_{m=1}^{n}
\delta\,\rho^mN^{m(1-\gamma)}
+
\rho^{n+1}N^{n(1-\gamma)}
+
\rho N^{-\gamma}
\right].
\label{eq.22}
\]

We now optimize the choice of $N$. The two terms which determine
the relevant scale are
\[
\rho^{n+1}N^{n(1-\gamma)}
\qquad\text{and}\qquad
\rho N^{-\gamma}.
\]
Balancing them gives
$
N^{n(1-\gamma)+\gamma}\asymp \rho^{-n}.$
Set
$
D:=n(1-\gamma)+\gamma
$
and choose
$
N:=\left\lceil \rho^{-n/D}\right\rceil .
\label{eq.23}
$
Since $\rho\le1$,
$
N\asymp \rho^{-n/D}.
$
For the approximation term,
\[
\rho N^{-\gamma}
\le
C\rho^{1+n\gamma/D}
=
C\rho^{(n+\gamma)/D}.
\label{eq.24}
\]
Likewise,
\[
\rho^{n+1}N^{n(1-\gamma)}
\le
C
\rho^{n+1-n^2(1-\gamma)/D}
=
C\rho^{(n+\gamma)/D}.
\label{eq.25}
\]
Finally,  since $0<\rho\le1$ we have for $1\le m\le n$,
\begin{equation}
\begin{aligned}
\delta\,\rho^mN^{m(1-\gamma)}
&\le
C\delta\,
\rho^{m-mn(1-\gamma)/D}
=
C\delta\,\rho^{m\gamma/D}
\le
C\delta.
\end{aligned}
\label{eq.26}
\end{equation}
Thus
\[
\|R^\ell_{s,t}\|_{E_\ell}
\le
C_{\ell,F,T}
\left(
\delta+\rho^{q_\ell}
\right),
\]
where
\[
q_\ell
=
\frac{n+\gamma}{D}
=
\frac{p-\ell+\gamma}
{(p-\ell)(1-\gamma)+\gamma}.
\]
Since
$
\rho^{q_\ell}
=
\|X\|_{\gamma;[s,t]}^{q_\ell}
|t-s|^{\gamma q_\ell},
$
this is precisely
\[
\|R^\ell_{s,t}\|_{E_\ell}
\le
C_{\ell,F,T}
\left(
|t-s|
+
\|X\|_{\gamma;[s,t]}^{q_\ell}
|t-s|^{\beta_\ell}
\right),
\]
with
\[
\beta_\ell
=
\gamma q_\ell
=
\frac{\gamma(p-\ell+\gamma)}
{(p-\ell)(1-\gamma)+\gamma}.
\]

It remains to consider $\ell=p$. In this case there is no Taylor
polynomial to subtract. By horizontal and supremum-norm Lipschitz
continuity,
\[
\begin{aligned}
\|\nabla_\omega^pF(t,X_t)
-\nabla_\omega^pF(s,X_s)\|_{E_p}
&\le
\|\nabla_\omega^pF(t,X_t)
-\nabla_\omega^pF(t,X_s)\|_{E_p}
+
\|\nabla_\omega^pF(t,X_s)
-\nabla_\omega^pF(s,X_s)\|_{E_p}
\\
&\le
L_p^\infty
\|X_t-X_s\|_\infty
+
L_p^h|t-s|
\\
&\le
L_p^\infty
\|X\|_{\gamma;[s,t]}|t-s|^\gamma
+
L_p^h|t-s|.
\end{aligned}
\]
This proves the result.
\end{proof}

\begin{remark}
\label{rem:comparison-AC-Bielert}
The proof of Proposition~\ref{prop:functional-taylor-expansion} uses the approximation scheme introduced in
~\cite[Lemma~2.2]{ananova2017}. Their proof
approximates the H\"older control by bounded-variation paths,
transfers functional estimates from the approximants to the
original path through supremum-norm continuity, and optimizes
the approximation scale, yielding the exponent
$\gamma(1+\gamma)$ at first order. 

A higher-order variant of this argument was
developed by Bielert~\cite[Corollary~2.3]{Bielert2026}  under  stronger
regularity assumptions involving the causal time derivatives.
This permits
stronger sewing estimates, whereas Proposition \ref{prop:functional-taylor-expansion} retains
only horizontal Lipschitz continuity and obtains  level-dependent
remainder exponents
\[
\beta_\ell
=
\frac{\gamma(p-\ell+\gamma)}
     {(p-\ell)(1-\gamma)+\gamma}.
\]
\end{remark}
Comparing Proposition \ref{prop:functional-taylor-expansion} with  Definition \ref{def:controlled-path}, we see that the vertical-derivative hierarchy
\[
\bigl(
F(\cdot,X),
\nabla_\omega F(\cdot,X),
\ldots,
\nabla_\omega^pF(\cdot,X)
\bigr)
\]
has the algebraic structure of a higher-order controlled
expansion, but with the level-dependent remainder exponents
\[
\beta_\ell
=
\frac{\gamma(p-\ell+\gamma)}
{(p-\ell)(1-\gamma)+\gamma},
\qquad \ell=0,\ldots,p-1,
\]
rather than the standard linear grading.
The consequences of these exponents for rough integration are
examined in the next subsection.
\subsection{Rough integration under weak horizontal regularity}
\label{sec:weak-horizontal-integration}

We next examine when the expansion of
Proposition~\ref{prop:functional-taylor-expansion} is by itself
sufficient to define a rough integral. The result below is a
consequence of the Sewing Lemma and should be interpreted as
a consequence of the weak horizontal regularity assumptions,
rather than as a general limitation on rough integration of
path-dependent functionals.

\begin{theorem}[Rough integration under weak horizontal regularity]
\label{thm:weak-horizontal-rough-integral}
Let $\gamma\in(0,1/2]$ and $
p:=\lfloor1/\gamma\rfloor.$
Let
$
\mathbf X
=
(1,\mathbf X^{1},\ldots,\mathbf X^{p})$
be a geometric $\gamma$-H\"older rough path over
$X=\mathbf X^1$ and
\[
F:\Lambda_T^d
\longrightarrow
\operatorname{Lin}(\mathbb R^d,\mathbb R^v)
\]
a non-anticipative functional satisfying the assumptions of
Proposition~\ref{prop:functional-taylor-expansion} up to vertical order
$p$.
For $j=0,\ldots,p-1$, we
regard
\[
\nabla_\omega^jF(s,X_s)
\]
as a $(j+1)$-linear map, the final tensor variable being the
one-form variable, and set
\[
\Xi_{s,t}
:=
\sum_{j=0}^{p-1}
\left\langle
\nabla_\omega^jF(s,X_s),
\mathbf X^{j+1}_{s,t}
\right\rangle .
\]

For $\ell=0,\ldots,p-2$, let
\[
\eta_\ell
:=
\beta_\ell+(\ell+1)\gamma,
\qquad
\beta_\ell
=
\frac{\gamma(p-\ell+\gamma)}
     {(p-\ell)(1-\gamma)+\gamma}.
\]
If $
\min_{0\le\ell\le p-2}\eta_\ell>1,$
there exists a unique additive map
$ I:\Delta_T\to\mathbb R^v $
such that, locally in $(s,t)$,
\[
\|I_{s,t}-\Xi_{s,t}\|
\le
C|t-s|^\theta
\]
for some $\theta>1$.
In particular,
\[
\int_s^t F(r,X_r)\,d\mathbf X_r
:=
I_{s,t}
=
\lim_{|\pi|\to0}
\sum_{[u,v]\in\pi}\Xi_{u,v}
\]
is well defined.
\end{theorem}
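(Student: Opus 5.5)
The plan is to apply the Sewing Lemma recalled in Section~\ref{subsec:controlled-paths} to $\Xi$. Everything then reduces to showing that, for $s\le u\le t$ in a sufficiently short interval, $\|\delta\Xi_{s,u,t}\|\lesssim |t-s|^\theta$ with
\[
\theta:=\min\Bigl(\min_{0\le\ell\le p-2}\eta_\ell,\ (p+1)\gamma,\ 1+\gamma\Bigr)>1 .
\]
Note that $(p+1)\gamma>1$ holds automatically because $p=\lfloor 1/\gamma\rfloor$.

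\emph{Locality.} Proposition~\ref{prop:functional-taylor-expansion} requires $\rho_{s,t}=\|X\|_{\gamma;[s,t]}|t-s|^\gamma\le1$. I will therefore work on intervals of length at most $h_0:=\|X\|_\gamma^{-1/\gamma}$, where the estimate \eqref{eq:functional-Taylor-uniform-remainder} holds with constants independent of $(s,t)$. The sewn map on short intervals is then extended additively to all of $\Delta_T$; this is what ``locally in $(s,t)$'' refers to.

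\emph{Algebraic step.} Using Chen's relation $\mathbf X^{j+1}_{s,t}=\sum_{a+b=j+1}\mathbf X^a_{s,u}\otimes\mathbf X^b_{u,t}$, I will write
\[
\delta\Xi_{s,u,t}=-\sum_{\ell=0}^{p-1}\Bigl\langle \nabla_\omega^\ell F(u,X_u)-\sum_{m=1}^{p-1-\ell}\nabla_\omega^{\ell+m}F(s,X_s)[\mathbf X^m_{s,u}],\ \mathbf X^{\ell+1}_{u,t}\Bigr\rangle-\ldots
\]
In this regrouping the $\nabla^0_\omega$ contribution at time $s$ cancels. The point that needs care is the ordering of tensor variables. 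In $\nabla_\omega^{\ell+m}F(s,X_s)$ the last variable is the one-form slot, and the $m$ contracted slots are vertical slots. Since vertical derivatives are symmetric in their vertical variables, \eqref{eq:symmetric-contraction} applies and gives $\nabla_\omega^{\ell+m}F[\mathbf X^m_{s,u}]=\frac1{m!}\nabla_\omega^{\ell+m}F[X_{s,u}^{\otimes m}]$. Consequently the bracket equals a \emph{truncated} Taylor remainder $\tilde R^\ell_{s,u}$ at level $\ell$ of order $p-1-\ell$. I expect this index bookkeeping — matching Chen's splitting with the correct slots, and checking that no stray terms survive — to be the main obstacle. I would first verify it by hand for $p=2$ and then write the general cancellation as in the binomial argument of Lemma~\ref{lem:pc-taylor}.

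\emph{Analytic step.} Compared with the remainder $R^\ell_{s,u}$ of Proposition~\ref{prop:functional-taylor-expansion}, which includes terms up to order $p-\ell$, one has
\[
\tilde R^\ell_{s,u}=R^\ell_{s,u}+\tfrac1{(p-\ell)!}\nabla_\omega^pF(s,X_s)[X_{s,u}^{\otimes(p-\ell)}].
\]
The second term is bounded by $C|u-s|^{(p-\ell)\gamma}$ because $\nabla_\omega^pF$ is bounded along $X$: it is Lipschitz in both directions on a compact time interval. Pairing with $\|\mathbf X^{\ell+1}_{u,t}\|\lesssim|t-s|^{(\ell+1)\gamma}$ then gives exponent $(p+1)\gamma>1$. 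For the first term, \eqref{eq:functional-Taylor-uniform-remainder} gives
\[
\|R^\ell_{s,u}\|\lesssim |t-s|+|t-s|^{\beta_\ell}.
\]
The contribution $|t-s|^{1+(\ell+1)\gamma}$ is harmless. The contribution $|t-s|^{\beta_\ell+(\ell+1)\gamma}=|t-s|^{\eta_\ell}$ is controlled by the hypothesis for $\ell\le p-2$. For $\ell=p-1$ one has $\eta_{p-1}=\beta_{p-1}+p\gamma>(p+1)\gamma>1$ automatically, since $q_{p-1}>1$ gives $\beta_{p-1}>\gamma$.

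Summing these bounds yields $\|\delta\Xi_{s,u,t}\|\le C|t-s|^\theta$ on short intervals. The Sewing Lemma then gives existence and uniqueness of $I$, the bound $\|I_{s,t}-\Xi_{s,t}\|\le C|t-s|^\theta$, and the Riemann-sum representation of $\int F(r,X_r)\,d\mathbf X_r$.
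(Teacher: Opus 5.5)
Your proposal follows essentially the same route as the paper. Both arguments use sewing on intervals where $\rho_{s,t}\le1$, the Chen-relation cancellation that pairs truncated Taylor remainders with $\mathbf X^{\ell+1}_{u,t}$, the splitting $\tilde R^\ell=R^\ell+\frac1{(p-\ell)!}\nabla_\omega^pF(s,X_s)[X_{s,u}^{\otimes(p-\ell)}]$, and the resulting exponents $1+(\ell+1)\gamma$, $\eta_\ell$ and $(p+1)\gamma$.

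There are two small differences. At level $p-1$ the paper simply bounds $\|Y^{p-1}_u-Y^{p-1}_s\|\lesssim|u-s|^\gamma$ rather than passing through $R^{p-1}$ and $\eta_{p-1}$. Also, in your displayed identity the inner sum should start at $m=0$, i.e.\ the bracket should include $-\nabla_\omega^\ell F(s,X_s)$; after that correction no further terms ``$-\ldots$'' remain.
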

\begin{proof}
Write
\[
Y^\ell_t:=\nabla_\omega^\ell F(t,X_t),
\qquad
\ell=0,\ldots,p-1.
\]
For $\ell\le p-2$, define the remainder appropriate to a
step-$p$ controlled expansion by
\[
\overline R^\ell_{s,t}
:=
Y^\ell_t-Y^\ell_s
-
\sum_{j=1}^{p-1-\ell}
\frac1{j!}
\nabla_\omega^{\ell+j}F(s,X_s)
[X_{s,t}^{\otimes j}].
\]
By Proposition~\ref{prop:functional-taylor-expansion},
\[
\overline R^\ell_{s,t}
=
\frac1{(p-\ell)!}
\nabla_\omega^pF(s,X_s)
[X_{s,t}^{\otimes(p-\ell)}]
+
R^\ell_{s,t}.
\]
Consequently, on sufficiently small intervals,
\[
\|\overline R^\ell_{s,t}\|
\lesssim
|t-s|
+
|t-s|^{\beta_\ell}
+
|t-s|^{(p-\ell)\gamma}.
\]
Moreover, the estimate at level $p-1$ gives
\[
\|Y^{p-1}_t-Y^{p-1}_s\|
\lesssim |t-s|^\gamma.
\]
Since $\nabla_\omega^{\ell+j}F(s,X_s)$ is symmetric in
the $j$ vertical variables being contracted, and since for a
geometric rough path
\[
\operatorname{Sym}\mathbf X^j_{s,t}
=
\frac1{j!}X_{s,t}^{\otimes j},
\]
the Taylor term
\[
\frac1{j!}
\nabla_\omega^{\ell+j}F(s,X_s)
[X_{s,t}^{\otimes j}]
\]
may equivalently be written as the contraction of
$\nabla_\omega^{\ell+j}F(s,X_s)$ with
$\mathbf X^j_{s,t}$ in these $j$ vertical variables.
Hence Proposition~\ref{prop:functional-taylor-expansion} provides precisely the controlled
expansions required in the Chen-relation computation for
$\Xi$.
The standard cancellation gives
\[
|\delta\Xi_{s,u,t}|
\lesssim
\sum_{\ell=0}^{p-2}
\|\overline R^\ell_{s,u}\|
\,
\|\mathbf X^{\ell+1}_{u,t}\|
+
\|Y^{p-1}_u-Y^{p-1}_s\|
\,
\|\mathbf X^p_{u,t}\|.
\]
It follows that
\[
|\delta\Xi_{s,u,t}|
\lesssim
\sum_{\ell=0}^{p-2}
\left(
|t-s|^{1+(\ell+1)\gamma}
+
|t-s|^{\beta_\ell+(\ell+1)\gamma}
+
|t-s|^{(p+1)\gamma}
\right)
+
|t-s|^{(p+1)\gamma}.
\]
Now
\[
1+(\ell+1)\gamma>1,
\qquad
(p+1)\gamma>1.
\]
Thus the only non-automatic condition is
\[
\beta_\ell+(\ell+1)\gamma>1,
\qquad
\ell=0,\ldots,p-2.
\]
The Sewing Lemma yields the result.
\end{proof}

\begin{corollary}[Threshold supplied by Proposition~\ref{prop:functional-taylor-expansion}]
\label{cor:two-fifths}
Let
\[
p=\lfloor1/\gamma\rfloor,
\qquad
\gamma\in(0,1/2].
\]
Then the quantities
$
\eta_\ell
=
\beta_\ell+(\ell+1)\gamma$
are strictly increasing in $\ell$. Hence the restrictive
condition in
Theorem~\ref{thm:weak-horizontal-rough-integral}
is
\[
\beta_0+\gamma>1.
\]

For $\gamma\in(1/3,1/2]$, where $p=2$, this is equivalent to
\[
\frac{\gamma(2+\gamma)}{2-\gamma}+\gamma>1,
\]
and therefore to
\[
\gamma>\frac25.
\]
For $\gamma\le1/3$ the same condition fails. Consequently the
estimate of Proposition~\ref{prop:functional-taylor-expansion}, used
without additional structure, yields rough integration in the
range
$
\gamma>\frac25.$
\end{corollary}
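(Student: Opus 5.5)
The plan is to reduce everything to elementary monotonicity properties of the single-variable map
\[
g(n):=\frac{n+\gamma}{n(1-\gamma)+\gamma},\qquad n\ge1,
\]
since $\beta_\ell=\gamma\,g(p-\ell)$. Writing $n=p-\ell$, we have $(\ell+1)\gamma=(p+1)\gamma-n\gamma$. Hence
\[
\eta_\ell=(p+1)\gamma+f(n),\qquad f(n):=\gamma\,g(n)-n\gamma .
\]
Increasing $\ell$ decreases $n$. So strict increase of $\eta_\ell$ in $\ell$ is equivalent to $f(n+1)<f(n)$ for $n\ge 2$. It then follows that the binding sewing condition in Theorem~\ref{thm:weak-horizontal-rough-integral} is the one at $\ell=0$, namely $\beta_0+\gamma>1$.

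For the monotonicity, I would compute the increment of $g$ exactly. With $D_n:=n(1-\gamma)+\gamma$, clearing denominators gives
\[
g(n+1)-g(n)=\frac{(n+1+\gamma)D_n-(n+\gamma)D_{n+1}}{D_nD_{n+1}}=\frac{\gamma^2}{D_nD_{n+1}} .
\]
The numerator simplifies to $D_n-(n+\gamma)(1-\gamma)=\gamma^2$. Since $D_n\ge D_1=1$ for $n\ge1$, this increment is at most $\gamma^2$. Therefore
\[
f(n+1)-f(n)\le\gamma(\gamma^2-1)<0,
\]
which gives the strict monotonicity. I expect this step to be the only one requiring care, namely getting the sign bookkeeping between $\ell$ and $n$ right. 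The same computation also shows that $g$ is increasing with $g(n)<1/(1-\gamma)$, because $(n+\gamma)(1-\gamma)<D_n$ is equivalent to $\gamma(1-\gamma)<\gamma$.

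For $p=2$, i.e. $\gamma\in(1/3,1/2]$, the condition $\beta_0+\gamma>1$ reads $\frac{\gamma(2+\gamma)}{2-\gamma}+\gamma>1$. Multiplying by $2-\gamma>0$ gives $\gamma(2+\gamma)+\gamma(2-\gamma)>2-\gamma$, that is $4\gamma>2-\gamma$, or equivalently $\gamma>2/5$.

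For $\gamma\le1/3$ we have $p\ge3$. Using the bound $g(p)<1/(1-\gamma)$,
\[
\eta_0=\gamma g(p)+\gamma<\frac{\gamma}{1-\gamma}+\gamma\le\frac12+\frac13<1,
\]
because $\gamma\mapsto\gamma/(1-\gamma)+\gamma$ is increasing. So the condition fails. Combining these cases with Theorem~\ref{thm:weak-horizontal-rough-integral} yields the final assertion, namely that Proposition~\ref{prop:functional-taylor-expansion} alone gives rough integration exactly for $\gamma>2/5$ within $\gamma\le1/2$.
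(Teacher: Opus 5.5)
Your argument is correct and complete. The paper states this corollary without proof, and your computation is exactly the elementary verification it leaves implicit: the exact increment $g(n+1)-g(n)=\gamma^2/(D_nD_{n+1})\le\gamma^2$, which gives $\eta_{\ell+1}-\eta_\ell\ge\gamma-\gamma^3>0$; the direct algebra $4\gamma>2-\gamma$ for $p=2$; and the uniform bound $g(n)<1/(1-\gamma)$, which gives $\eta_0<\tfrac12+\tfrac13$ for $\gamma\le1/3$.

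The only caveat is interpretive. Your ``exactly for $\gamma>2/5$'' should be read as ``the sewing criterion of Theorem~\ref{thm:weak-horizontal-rough-integral} holds exactly when $\gamma>2/5$''. It does not say that rough integration fails below $2/5$, since that criterion is only sufficient.
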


 \begin{remark}{\em 
The threshold $\gamma>2/5$ is therefore not an intrinsic
barrier for rough functional integration. It is the threshold
obtained when the Ananova--Cont approximation strategy is
carried out under the weak horizontal assumptions of
Proposition~\ref{prop:functional-taylor-expansion} with only $p$ vertical
derivatives available.

Under the stronger horizontal differentiability assumptions used
by Bielert~\cite{Bielert2026}, the same  approximation
philosophy can be iterated to sufficiently high order to
yield a rough functional It\^o formula for arbitrary
$\gamma\in(0,1)$.}
\end{remark}
\subsection{Recovery of  first-order controlled regularity}
\label{sec:quadratic-variation-recovery}

The loss of regularity generated by the approximation
argument is already present in the first-order estimate of \cite{ananova2017} and persists in its higher-order extension \cite{Bielert2026}. It is therefore natural to ask whether this loss is
an artefact of the approximation procedure or reflects a
genuine feature of path-dependent functionals.

The following result shows that the loss is not intrinsic.
When the control is $\gamma-$H\"older with $\gamma>\sqrt{2}-1$, 
additional cancellations recover the classical $2\gamma$ remainder.
\begin{proposition}[Recovery of the classical first-order remainder]
\label{prop:classical-remainder}
Let
\[
\sqrt{2}-1<\gamma\le \frac12,
\qquad
X\in C^\gamma([0,T],\mathbb R^d),
\]
and assume that $F$ satisfies the hypotheses of
Proposition~\ref{prop:functional-taylor-expansion} with $p=2$.
Then, for every $0\le s<t\le T$,
\[
F(t,X_t)-F(s,X_s)
=
\nabla_\omega F(s,X_s)[X_{s,t}]
+
R^F_{s,t},\quad{\rm where}\quad
\|R^F_{s,t}\|
\le
C_{F,T,X}|t-s|^{2\gamma}.
\]
Moreover,
\[
\|\nabla_\omega F(t,X_t)-\nabla_\omega F(s,X_s)\|
\le
C_{F,T,X}|t-s|^\gamma.
\]
Consequently, $
\bigl(F(\cdot,X),\nabla_\omega F(\cdot,X)\bigr)$
is a controlled path  in the  sense of Gubinelli \cite{GUBINELLI200486}.
\end{proposition}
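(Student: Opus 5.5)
The plan is to run a second-order sewing argument. The germ is built from the level-$0$ and level-$1$ expansions of Proposition~\ref{prop:functional-taylor-expansion} with $p=2$, and the germ is compensated by the second vertical derivative so that its defect only involves the \emph{level-1} remainder. That remainder has the much better exponent $\beta_1=\gamma(1+\gamma)$. The level-0 remainder, with $\beta_0=\gamma(2+\gamma)/(2-\gamma)$, is too weak to sew with when $\gamma<2/5$.

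\textbf{Step 1: the compensated germ.} For $p=2$, Proposition~\ref{prop:functional-taylor-expansion} gives
\[
\nabla_\omega F(t,X_t)-\nabla_\omega F(s,X_s)=\nabla^2_\omega F(s,X_s)[X_{s,t}]+R^1_{s,t},\qquad \|R^1_{s,t}\|\lesssim |t-s|+|t-s|^{\gamma(1+\gamma)},
\]
and also $\|\nabla^2_\omega F(t,X_t)-\nabla^2_\omega F(s,X_s)\|\lesssim |t-s|^\gamma$. Set
\[
\Xi_{s,t}:=\nabla_\omega F(s,X_s)[X_{s,t}]+\tfrac12\nabla^2_\omega F(s,X_s)[X_{s,t}^{\otimes2}].
\]
Since $\nabla^2_\omega F$ is symmetric, $X_{s,t}^{\otimes 2}-X_{s,u}^{\otimes2}-X_{u,t}^{\otimes2}$ contracts to $2\nabla^2_\omega F(s,X_s)[X_{s,u},X_{u,t}]$. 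A direct computation then gives
\[
\delta\Xi_{s,u,t}=-R^1_{s,u}[X_{u,t}]-\tfrac12\bigl(\nabla^2_\omega F(u,X_u)-\nabla^2_\omega F(s,X_s)\bigr)[X_{u,t}^{\otimes2}].
\]
From this, $|\delta\Xi_{s,u,t}|\lesssim |t-s|^{1+\gamma}+|t-s|^{2\gamma+\gamma^2}+|t-s|^{3\gamma}$. Every exponent exceeds $1$ exactly when $2\gamma+\gamma^2>1$, i.e.\ $\gamma>\sqrt2-1$; note $3\gamma>1$ is then automatic. The Sewing Lemma yields an additive $I$ with $|I_{s,t}-\Xi_{s,t}|\lesssim|t-s|^{\theta}$, where $\theta=\min(1+\gamma,2\gamma+\gamma^2,3\gamma)>1$. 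No rough-path lift is needed, because the germ only uses $X_{s,t}^{\otimes 2}$.

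\textbf{Step 2: identification of $I$ (the main obstacle).} We must show that $A_{s,t}:=F(t,X_t)-F(s,X_s)-I_{s,t}$ satisfies $|A_{s,t}|\lesssim|t-s|$. This suffices, since $|t-s|\le C|t-s|^{2\gamma}$ for $\gamma\le\frac12$ on $[0,T]$. It then gives
\[
R^F_{s,t}=\tfrac12\nabla^2_\omega F(s,X_s)[X_{s,t}^{\otimes2}]+(I_{s,t}-\Xi_{s,t})+A_{s,t}=O(|t-s|^{2\gamma}).
\]
$A$ is additive, and $A_{s,t}=\lim_{|\pi|\to0}\sum_{[u,v]\in\pi}(F(v,X_v)-F(u,X_u)-\Xi_{u,v})$. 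Simply inserting the level-0 bound of Proposition~\ref{prop:functional-taylor-expansion} per interval fails, because $\beta_0<1$ makes $\sum|u-v|^{\beta_0}$ diverge.

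My first attempt will be to evaluate this Riemann sum directly through the mechanism of Lemma~\ref{lem:pc-taylor}, using a \emph{single} uniform partition of $[s,t]$ with mesh $\delta/N$. Each increment $F(t_{i+1},X_{t_{i+1}})-F(t_i,X_{t_i})$ splits into three pieces:
\begin{itemize}
\item a horizontal part, whose sum is at most $L^h_0|t-s|$;
\item a vertical part, obtained by replacing $X$ on $[t_i,t_{i+1}]$ only by the frozen value followed by a jump $z_i=X_{t_i,t_{i+1}}$, which costs $L^\infty_0|t_{i+1}-t_i|^\gamma$ in sup-norm at that step;
\item a second-order vertical Taylor expansion at the jump, with a cubic error $|z_i|^3$, summing to $N^{1-3\gamma}\to0$.
\end{itemize}
The delicate point is the sup-norm replacement cost. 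Summed naively it gives $N^{1-\gamma}$, which diverges. It must instead be compared with the corresponding term of $\Xi_{t_i,t_{i+1}}$ and cancelled to second order: the modification of the path on $[t_i,t_{i+1})$ is itself a small path of size $(\delta/N)^\gamma$, so the germ cancels its first two orders. I expect the leftover to be governed by the level-1 remainder, hence of size $N\,(\delta/N)^{2\gamma+\gamma^2}\to0$, together with horizontal terms of total size $O(\delta)$.

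If this local cancellation cannot be made rigorous, the fallback is a dyadic bootstrap. We know a priori that $|A_{s,t}|\lesssim|t-s|+|t-s|^{\beta_0}$. We would then improve the exponent of the non-horizontal part step by step using additivity and the $\delta\Xi$ bound, until it exceeds $1$, which forces that part to vanish.

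\textbf{Step 3: conclusion.} The estimate $\|\nabla_\omega F(t,X_t)-\nabla_\omega F(s,X_s)\|\lesssim|t-s|^\gamma$ follows immediately from the level-1 expansion of Step 1: $|\nabla_\omega^2F|$ is bounded along $X$, and $|R^1_{s,t}|\lesssim|t-s|^{\gamma}$. Together with Step 2, this shows that $(F(\cdot,X),\nabla_\omega F(\cdot,X))$ satisfies \eqref{eq:first-order-controlled-path}, with constants depending on $F$, $T$ and $\|X\|_\gamma$.
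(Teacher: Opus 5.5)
\textbf{There is a genuine gap in Step 2.} Your Step 1 is the paper's first step: the same compensated germ, the same formula for $\delta\Xi_{s,u,t}$, and the same threshold $2\gamma+\gamma^2>1$. Reducing the problem to $|A_{s,t}|\lesssim|t-s|$ is also the right reduction. The gap is that Step 2 never proves this bound.

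In your single-partition attempt, Taylor-expanding the jump $z_i$ at time $t_{i+1}$ already reproduces $\Xi_{t_i,t_{i+1}}$ up to $O(|t_{i+1}-t_i|^{1+\gamma}+|z_i|^3)$. So the replacement term $F(t_{i+1},X_{t_{i+1}})-F(t_{i+1},X_{t_i}+z_i\mathbf 1_{[t_{i+1},T]})$ has nothing left in the germ to be cancelled against. Claiming that the germ ``cancels its first two orders'' is claiming a second-order expansion of $F$ on $[t_i,t_{i+1}]$, i.e.\ the proposition itself at a smaller scale. The hypotheses give no derivative of $F$ with respect to a change of the path over a whole interval, only vertical derivatives at the current time. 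Nothing relates this term to $R^1$, which enters only through $\delta\Xi$.

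The fallback bootstrap cannot close either. Since $A$ is exactly additive, $\delta A=0$, and the $\delta\Xi$ bound has already been used up in constructing $I$. Iterating additivity over dyadic subdivisions multiplies the $|t-s|^{\beta_0}$ term by $2^{k(1-\beta_0)}$ instead of improving it. An additive map known only to satisfy $|A_{s,t}|\lesssim|t-s|+|t-s|^{\beta_0}$ with $\beta_0<1$ could be the increment of any $\beta_0$-H\"older path.

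\textbf{The missing idea} is to pass through bounded-variation approximants, so that the sup-norm replacement cost is paid once instead of at every step. For continuous $Y$ of bounded variation on $[s,t]$, telescoping along piecewise-constant approximations $Y^\pi$ gives $\|F(t,Y_t)-F(s,Y_s)-\int_s^t\nabla_\omega F(r,Y_r)\,dY(r)\|\le L_0^h|t-s|$. The quadratic and replacement errors are multiplied by $|\Delta_iY|$ and vanish, because $\sum_i|\Delta_iY|\le\operatorname{Var}(Y;[s,t])$. The limit $F(t,Y^\pi_t)\to F(t,Y_t)$ needs only one sup-norm comparison.

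The paper applies this to piecewise-linear interpolations $X^N$ with $X^N_s=X_s$, $X^N_{s,t}=X_{s,t}$ and $\sup_N\|X^N\|_{\gamma;[s,t]}\le C_\gamma\|X\|_{\gamma;[s,t]}$. For these paths:
\begin{itemize}
\item the Stieltjes integral coincides with the sewn map of $\Xi^{X^N}$, since the $\nabla^2_\omega F$ part of the Riemann sums vanishes for BV paths;
\item the sewing constant is uniform in $N$, because Proposition~\ref{prop:functional-taylor-expansion} sees the control only through its H\"older norm;
\item $\Xi^{X^N}_{s,t}=\Xi_{s,t}$.
\end{itemize}
Hence $\|F(t,X^N_t)-F(s,X_s)-\Xi_{s,t}\|\le L_0^h|t-s|+C|t-s|^{\theta}$ uniformly in $N$. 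Letting $N\to\infty$ and using the sup-norm Lipschitz continuity of $F$ gives exactly the bound your Step 2 needs.
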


\begin{proof}
Set  $G:=\nabla_\omega F,
H:=\nabla_\omega^2F.$
We first construct a second-order sewn increment. Define
\[
\Xi_{s,t}
:=
G(s,X_s)[X_{s,t}]
+
\frac12
H(s,X_s)[X_{s,t}^{\otimes2}],
\qquad
0\le s\le t\le T.
\]

We claim that $\Xi$ satisfies the hypotheses of the Sewing
Lemma. Let $s<u<t$. Since
$
X_{s,t}=X_{s,u}+X_{u,t}$
and $H(s,X_s)$ is symmetric, we have
\begin{equation}
\begin{aligned}
\delta\Xi_{s,u,t}
:={}&
\Xi_{s,t}-\Xi_{s,u}-\Xi_{u,t}
\\
={}&
\bigl(G(s,X_s)-G(u,X_u)\bigr)[X_{u,t}]
+
H(s,X_s)[X_{s,u},X_{u,t}]
\\
&+
\frac12
\bigl(H(s,X_s)-H(u,X_u)\bigr)
[X_{u,t}^{\otimes2}]
\\
={}&
-\Bigl(
G(u,X_u)-G(s,X_s)
-H(s,X_s)[X_{s,u}]
\Bigr)[X_{u,t}]
\\
&+
\frac12
\bigl(H(s,X_s)-H(u,X_u)\bigr)
[X_{u,t}^{\otimes2}].
\end{aligned}
\label{eq.27}
\end{equation}

Apply Proposition~\ref{prop:functional-taylor-expansion} at level
$\ell=1$. Since $p=2$, one has $
q_1=1+\gamma,
\beta_1=\gamma(1+\gamma),$
and therefore, locally in $(s,u)$,
\begin{equation}
\left\|
G(u,X_u)-G(s,X_s)-H(s,X_s)[X_{s,u}]
\right\|
\le
C_{F,T,X}
\left(
|u-s|+|u-s|^{\gamma+\gamma^2}
\right).
\label{eq.28}
\end{equation}
Using the $\gamma$-H\"older continuity of $X$, the first term
on the right-hand side of \eqref{eq.27} is thus bounded by
\[
C_{F,T,X}
\left(
|u-s|+|u-s|^{\gamma+\gamma^2}
\right)
|t-u|^\gamma,
\]
and hence
\begin{equation}
\begin{aligned}
&
\left\|
\Bigl(
G(u,X_u)-G(s,X_s)
-H(s,X_s)[X_{s,u}]
\Bigr)[X_{u,t}]
\right\|
\le
C_{F,T,X}
\left(
|t-s|^{1+\gamma}
+
|t-s|^{2\gamma+\gamma^2}
\right).
\end{aligned}
\label{eq.29}
\end{equation}

On the other hand, horizontal and supremum-norm Lipschitz
continuity of $H$ imply
\begin{equation}
\begin{aligned}
\|H(u,X_u)-H(s,X_s)\|
&\le
\|H(u,X_u)-H(u,X_s)\|
+
\|H(u,X_s)-H(s,X_s)\|
\\
&\le
L_2^\infty
\|X_u-X_s\|_\infty
+
L_2^h|u-s|
\\
&\le
C_{F,T,X}
\left(
|u-s|^\gamma+|u-s|
\right).
\end{aligned}
\label{eq.30}
\end{equation}
Consequently,
\begin{equation}
\begin{aligned}
&
\left\|
\bigl(H(s,X_s)-H(u,X_u)\bigr)
[X_{u,t}^{\otimes2}]
\right\|
\le
C_{F,T,X}
\left(
|t-s|^{3\gamma}
+
|t-s|^{1+2\gamma}
\right).
\end{aligned}
\label{eq.31}
\end{equation}
Combining \eqref{eq.27}--\eqref{eq.31},  we obtain
\[
\|\delta\Xi_{s,u,t}\|
\le
C_{F,T,X}
\left(
|t-s|^{1+\gamma}
+
|t-s|^{2\gamma+\gamma^2}
+
|t-s|^{3\gamma}
+
|t-s|^{1+2\gamma}
\right).
\]
Since $0<\gamma\le1/2$,
\[
2\gamma+\gamma^2
\le
1+\gamma,
\qquad
2\gamma+\gamma^2
\le
3\gamma,
\qquad
2\gamma+\gamma^2
\le
1+2\gamma.
\]
Thus, locally in $(s,t)$,
\[
\|\delta\Xi_{s,u,t}\|
\le
C_{F,T,X}|t-s|^\theta,
\qquad
\theta:=2\gamma+\gamma^2.
\label{eq.32}
\]
The assumption
$\gamma>\sqrt2-1 $
is equivalent to
$
2\gamma+\gamma^2>1.$
Hence the Sewing Lemma \cite[Lemma 4.2]{frizhairer} (see Section \ref{sec:functional-calculus}) yields a unique additive map
$
I:\Delta_T\longrightarrow\mathbb R^v
$
such that
\begin{equation}
\left\|
I_{s,t}
-
G(s,X_s)[X_{s,t}]
-
\frac12H(s,X_s)[X_{s,t}^{\otimes2}]
\right\|
\le
C_{F,T,X}|t-s|^\theta.
\label{eq.33}
\end{equation}
We next relate this sewn increment to increments of the functional.
For this purpose, let $Y\in C([0,T],\mathbb R^d)$ such that
$Y|_{[s,t]}$ has bounded variation.
Define
\[
\Xi^Y_{s,t}
:=
G(s,Y_s)[Y_{s,t}]
+
\frac12H(s,Y_s)[Y_{s,t}^{\otimes2}].
\]
and
\[
I^Y_{s,t}
:=
\lim_{|\pi|\to0}
\sum_{[u,v]\in\pi}
\left(
G(u,Y_u)[Y_{u,v}]
+\frac12H(u,Y_u)[Y_{u,v}^{\otimes2}]
\right).
\]
Since $H$ is bounded on the compact family of stopped
paths generated by $Y$,
\[
\begin{aligned}
\left\|
\sum_{[u,v]\in\pi}
H(u,Y_u)[Y_{u,v}^{\otimes2}]
\right\|
&\le
C_{F,T,Y}
\sum_{[u,v]\in\pi}|Y_{u,v}|^2
\\
&\le
C_{F,T,Y}
\max_{[u,v]\in\pi}|Y_{u,v}|
\operatorname{Var}(Y;[s,t]),
\end{aligned}
\]
which converges to zero as $|\pi|\to0$. Thus the limit is equal to
\begin{equation}\label{eq.34}
I^Y_{s,t}=\int_s^tG(r,Y_r)\,dY(r).
\end{equation}
If $Y$ is in addition $\gamma$-H\"older, this limit
coincides by uniqueness with the sewn additive map associated
with $\Xi^Y$.
We shall  use the following estimate:
\begin{equation}
\left\|
F(t,Y_t)-F(s,Y_s)
-
\int_s^tG(r,Y_r)\,dY(r)
\right\|
\le
L_0^h|t-s|.
\label{eq.35}
\end{equation}
Let $Y^\pi$ be the piecewise-constant approximation of $Y$ along $\pi=\{s=t_0<\cdots<t_m=t\}$
which agrees with $Y$ at the points of $\pi$ and is constant
on each $[t_i,t_{i+1})$. Then
\begin{equation}
\begin{aligned}
&F(t,Y^\pi_t)-F(s,Y_s)
=
\sum_{i=0}^{m-1}
\Bigl(
F(t_{i+1},Y^\pi_{t_i})
-
F(t_i,Y^\pi_{t_i})
\Bigr)
+
\sum_{i=0}^{m-1}
\Bigl(
F(t_{i+1},Y^\pi_{t_{i+1}})
-
F(t_{i+1},Y^\pi_{t_i})
\Bigr).
\end{aligned}
\label{eq.36}
\end{equation}
The first sum consists only of horizontal increments and
therefore satisfies
\begin{equation}
\left\|
\sum_{i=0}^{m-1}
\Bigl(
F(t_{i+1},Y^\pi_{t_i})
-
F(t_i,Y^\pi_{t_i})
\Bigr)
\right\|
\le
L_0^h|t-s|.
\label{eq.37}
\end{equation}

For the second sum, set
\[
\Delta_iY:=Y(t_{i+1})-Y(t_i).
\]
At  $t_{i+1}$ the stopped paths
$Y^\pi_{t_i}$ and $Y^\pi_{t_{i+1}}$ differ by the  
vertical perturbation
$
\Delta_iY\,\mathbf 1_{[t_{i+1},T]}.$
Hence, by the  Lipschitz continuity of $G$,

\begin{equation}
F(t_{i+1},Y^\pi_{t_{i+1}})
-
F(t_{i+1},Y^\pi_{t_i})
=
G(t_{i+1},Y^\pi_{t_i})[\Delta_iY]
+
r_i^\pi,\qquad
\|r_i^\pi\|
\le
C|\Delta_iY|^2.
\label{eq.39}
\end{equation}
Therefore
\begin{equation}
\sum_i\|r_i^\pi\|
\le
C
\max_i|\Delta_iY|
\operatorname{Var}(Y;[s,t])
\longrightarrow0.
\label{eq.40}
\end{equation}
Moreover,
$
\|Y^\pi-Y\|_\infty\longrightarrow0,$
and the horizontal and supremum-norm Lipschitz assumptions imply
that
$
r\longmapsto G(r,Y_r)$
is continuous. Hence
\begin{equation}
\sum_{i=0}^{m-1}
G(t_{i+1},Y^\pi_{t_i})[\Delta_iY]
\longrightarrow
\int_s^tG(r,Y_r)\,dY(r).
\label{eq.41}
\end{equation}
Finally,
\[
F(t,Y^\pi_t)\mathop{\longrightarrow}^{n\to\infty} F(t,Y_t)
\]
by supremum-norm Lipschitz continuity of $F$. Passing to the
limit in \eqref{eq.36} and using \eqref{eq.37}--\eqref{eq.41} proves
\eqref{eq.35}.
Choose now a sequence of
piecewise-linear paths $X^N$ such that
\[
X^N(r)=X(r),\qquad r\le s,\qquad
X^N(s)=X(s),
\qquad
X^N(t)=X(t),
\]
\begin{equation}
\|X^N-X\|_{\infty;[s,t]}\longrightarrow0,
\qquad
{\rm and}\qquad 
\sup_N
\|X^N\|_{\gamma;[s,t]}
\le
C_\gamma\|X\|_{\gamma;[s,t]}.
\label{eq.42}
\end{equation}
Each $X^N$ has bounded variation on $[s,t]$.
Applying \eqref{eq.35} to $X^N$ and using \eqref{eq.34}, we obtain
\begin{equation}
\left\|
F(t,X^N_t)-F(s,X_s)-I^{X^N}_{s,t}
\right\|
\le
L_0^h|t-s|.
\label{eq.43}
\end{equation}
Because of the uniform bound in \eqref{eq.42}, the constants in
the sewing estimate \eqref{eq.33} may be chosen uniformly in $N$.
Furthermore,
\[
X^N_s=X_s,
\qquad
X^N_{s,t}=X_{s,t}.
\]
Hence
\begin{equation}
\begin{aligned}
&
\left\|
I^{X^N}_{s,t}
-
G(s,X_s)[X_{s,t}]
-
\frac12
H(s,X_s)[X_{s,t}^{\otimes2}]
\right\|
\le
C_{F,T,X}|t-s|^\theta.
\end{aligned}
\label{eq.44}
\end{equation}
Combining \eqref{eq.43} and \eqref{eq.44} yields
\begin{equation}
\begin{aligned}
&
\left\|
F(t,X^N_t)-F(s,X_s)
-
\nabla_{\omega}F(s,X_s)[X_{s,t}]
-
\frac12
\nabla^2_{\omega}F(s,X_s)[X_{s,t}^{\otimes2}]
\right\|
\le
L_0^h|t-s|
+
C_{F,T,X}|t-s|^\theta.
\end{aligned}
\label{eq.45}
\end{equation}
Since $F$ is Lipschitz with respect to the supremum norm and
$X^N\to X$ uniformly,
\[
F(t,X^N_t)\mathop{\longrightarrow}^{N\to\infty} F(t,X_t).
\]
Passing to the limit $N\to\infty$ in \eqref{eq.45}, we obtain
\[
\begin{aligned}
&
\left\|
F(t,X_t)-F(s,X_s)
-
\nabla_{\omega}F(s,X_s)[X_{s,t}]
-
\frac12
\nabla^2_{\omega}F(s,X_s)[X_{s,t}^{\otimes2}]
\right\|
\\
&\qquad\le
C_{F,T,X}
\left(
|t-s|+|t-s|^\theta
\right).
\end{aligned}
\label{eq.46}
\]

It follows that
\[
\begin{aligned}
\|R^F_{s,t}\|
&\le
\frac12
\|H(s,X_s)\|
|X_{s,t}|^2
+
C_{F,T,X}
\left(
|t-s|+|t-s|^\theta
\right)
\\
&\le
C_{F,T,X}
\left(
|t-s|^{2\gamma}
+
|t-s|
+
|t-s|^{2\gamma+\gamma^2}
\right).
\end{aligned}
\label{eq.47}
\]
Since
$
2\gamma\le1$ and $2\gamma+\gamma^2>1$
we have, on $[0,T]$,
\[
|t-s|+|t-s|^{2\gamma+\gamma^2}
\le
C_T|t-s|^{2\gamma}.
\]
Consequently,
\begin{equation}
\|R^F_{s,t}\|
\le
C_{F,T,X}|t-s|^{2\gamma}.
\label{eq.48}
\end{equation}
Finally,
\[
\begin{aligned}
\|G(t,X_t)-G(s,X_s)\|
&\le
\|G(t,X_t)-G(t,X_s)\|
+
\|G(t,X_s)-G(s,X_s)\|
\\
&\le
L_1^\infty
\|X_t-X_s\|_\infty
+
L_1^h|t-s|
\\
&\le
C_{F,T,X}
\left(
|t-s|^\gamma+|t-s|
\right)
\le
C_{F,T,X}|t-s|^\gamma.
\end{aligned}
\]
Together with \eqref{eq.48}, this is precisely the 
first-order controlled-path estimate.
\end{proof}
\begin{remark}[Relation with  regularity loss in Bielert (2026)]
\label{rem:bielert-loss}
Bielert~\cite[Corollary~2.3 and Remark~3.4]{Bielert2026}
obtains, under stronger causal differentiability assumptions,
higher-order approximation estimates containing a
$\gamma^2$-type loss, and points out that it is open whether this
loss can be avoided in general.
Proposition~\ref{prop:classical-remainder} shows that in the first-order controlled-path
problem the weaker exponent supplied directly by
Proposition~\ref{prop:functional-taylor-expansion} is not optimal when
\[
\sqrt2-1<\gamma\le\frac12.
\]
Indeed, a second-order compensated sewing argument recovers the
classical $2\gamma$ remainder for every
$X\in C^\gamma([0,T],\mathbb R^d)$, without assuming causal time
differentiability or any additional quadratic-variation or
non-degeneracy condition on the control. The restriction
$\gamma>\sqrt2-1$ enters through the sewing condition
\[
2\gamma+\gamma^2>1.
\]
Thus this threshold reflects the  second-order sewing
argument rather than an intrinsic obstruction to
controlled regularity.
\end{remark}
Notice that the threshold $\sqrt2-1$ concerns recovery of the
{classical controlled-path remainder}, whereas
Theorem~\ref{thm:weak-horizontal-rough-integral} already yields rough
integrability from the weaker Taylor estimates for
$\gamma>2/5$.

\noindent These results distinguish three regularity regimes for the range $\gamma\in(1/3,1/2]$. If
\[
\frac13<\gamma\le\frac25,
\]
the weak-horizontal Taylor estimates obtained above do not by
themselves yield a sewing exponent larger than one. If
\[
\frac25<\gamma\le\sqrt2-1,
\]
Theorem~\ref{thm:weak-horizontal-rough-integral} yields rough integrability,
but the available estimates do not recover the classical
$2\gamma$ controlled remainder. Finally, if
\[
\sqrt2-1<\gamma\le\frac12,
\]
Proposition~\ref{prop:classical-remainder} shows that
$
\bigl(F(\cdot,X),\nabla_\omega F(\cdot,X)\bigr)
$
is a controlled path in the usual sense \cite{GUBINELLI200486}. Thus the
threshold $\gamma>2/5$ concerns rough integrability under the
weak-horizontal estimates, whereas the stronger threshold
$\gamma>\sqrt2-1$ concerns recovery of the classical
controlled-path regularity.
\begin{corollary}
\label{cor:rough-integrability-classical}
Let $
\mathbf X=(1,X,\mathbb X)$
be a step-$2$ geometric $\gamma$-H\"older rough path over
$X\in C^\gamma([0,T],\mathbb R^d)$ with
$
\sqrt{2}-1<\gamma\le \frac12 $.
If
$
F:\Lambda_T^d\longrightarrow
\operatorname{Lin}(\mathbb R^d,\mathbb R^v)
$
satisfies the assumptions of Proposition~\ref{prop:classical-remainder}
with $p=2$, then
\[
\bigl(F(\cdot,X),\nabla_\omega F(\cdot,X)\bigr)
\]
is a controlled path with respect to $X$ and 
$\int_s^t F(r,X_r)\,d\mathbf X_r
$
may be  defined as a rough integral.
\end{corollary}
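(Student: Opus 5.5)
The corollary follows from Proposition~\ref{prop:classical-remainder} combined with the standard first-order rough integration theory. We only need to check that the hypotheses transfer to the one-form-valued setting and then apply the Sewing Lemma with $N=2$.

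\textbf{Step 1 (controlledness).} Apply Proposition~\ref{prop:classical-remainder} with $E=\operatorname{Lin}(\mathbb R^d,\mathbb R^v)$ in place of $\mathbb R^v$. Its proof, and that of Proposition~\ref{prop:functional-taylor-expansion}, only use that the target is a finite-dimensional normed space. The same holds for the Sewing Lemma, Lemma~\ref{lem:pc-taylor} and the piecewise-linear approximation step. This gives
\[
Y_t:=F(t,X_t),\qquad Y'_t:=\nabla_\omega F(t,X_t),
\]
with $\|Y_t-Y_s-Y'_s[X_{s,t}]\|\lesssim|t-s|^{2\gamma}$ and $\|Y'_t-Y'_s\|\lesssim|t-s|^\gamma$. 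This is exactly \eqref{eq:first-order-controlled-path}, i.e.\ Definition~\ref{def:controlled-path} with $N=2$. Here $Y'_s$ is regarded as a bilinear map on $\mathbb R^d\otimes\mathbb R^d$, with the last variable being the one-form variable, via the identification recalled after \eqref{eq:controlled-rough-integral-approximant}.

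\textbf{Step 2 (sewing).} Set $\Xi_{s,t}:=Y_s[X_{s,t}]+Y'_s[\mathbb X_{s,t}]$. Chen's relation $\mathbb X_{s,t}=\mathbb X_{s,u}+\mathbb X_{u,t}+X_{s,u}\otimes X_{u,t}$ gives the standard identity
\[
\delta\Xi_{s,u,t}=-R^Y_{s,u}[X_{u,t}]-(Y'_u-Y'_s)[\mathbb X_{u,t}].
\]
Hence $\|\delta\Xi_{s,u,t}\|\lesssim|t-s|^{3\gamma}$. Since $\gamma>\sqrt2-1>1/3$, we have $3\gamma>1$, and the Sewing Lemma yields the additive map $I_{s,t}=:\int_s^tF(r,X_r)\,d\mathbf X_r$ as the limit of Riemann sums of $\Xi$.

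\textbf{Main obstacle.} The only point needing care is that $F$ here is one-form valued, while the vertical derivatives in Proposition~\ref{prop:classical-remainder} act only in the vertical variables. We must make sure the contraction ordering, with the vertical variable first and the one-form variable last, is consistent in $Y'_s[\mathbb X_{s,t}]$.

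It is also worth noting that no symmetry of $\mathbb X$ is needed here. Since the geometric property enters only through $\operatorname{Sym}\mathbb X=\tfrac12X^{\otimes2}$, the step-2 integral is well defined without it. If desired, $\Xi$ could be rewritten using \eqref{eq:symmetric-contraction}, but this is not necessary.
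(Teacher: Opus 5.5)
Your proposal is correct and is essentially the argument the paper intends; the corollary is stated without a separate proof. Proposition~\ref{prop:classical-remainder}, applied with target $\operatorname{Lin}(\mathbb R^d,\mathbb R^v)\cong\mathbb R^{dv}$, gives the first-order controlled pair, and sewing the approximant \eqref{eq:controlled-rough-integral-approximant} with $N=2$ then works because $3\gamma>1$. One caution: your closing aside is false as stated. $\nabla_\omega F(s,X_s)$ is not symmetric in the (vertical, one-form) pair of variables, so $Y'_s[\mathbb X_{s,t}]$ cannot be replaced by $\tfrac12 Y'_s[X_{s,t}^{\otimes 2}]$ via \eqref{eq:symmetric-contraction}; the resulting germ would in general have $\delta$ only of order $|t-s|^{2\gamma}$. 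Since you never use that rewriting, the proof stands.
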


\section{From controlled path families to functional calculus}
\label{sec:converse}

We now turn to the converse direction, which is the main
representation result of the paper. Controlled rough path theory
is ordinarily formulated relative to a fixed control, and the
Gubinelli derivatives are then merely coefficients in an
expansion along that control. We show that the situation becomes
rigid when such expansions are specified compatibly as the
underlying control varies.

We consider a family of non-anticipative
functionals
\[
G_0,\ldots,G_p
\]
satisfying compatible higher-order controlled Taylor estimates.
Under a mild continuity assumption and an $o(r)$ condition on
the remainder in the control increment, we prove that
\[
G_{j}=\nabla^j_\omega G_0,
\qquad \j=1,\ldots,p.
\]
Thus the entire coefficient hierarchy is generated by the
vertical derivatives of the base functional $G_0$. In particular, the coefficients are
symmetric and uniquely determined on vertical fibres.
\subsection{Functional representation of compatible controlled families}
\label{sec:abstract-identification}

We now turn to the converse problem. The particular power
$q_\ell$ appearing in Proposition~\ref{prop:functional-taylor-expansion}
is not relevant for the identification of the coefficients.
The argument only requires that the error in a vertical
increment be of smaller order than the size of that increment.
This leads naturally to the following formulation.

\begin{theorem}[Representation theorem]
\label{thm:representation}
Let $p\in\mathbb N$ and $\gamma\in(0,1)$. For
$\ell=0,\ldots,p$, set
\[
E_\ell
:=
\operatorname{Lin}
\bigl(
(\mathbb R^d)^{\otimes\ell},
\mathbb R^k
\bigr),
\qquad
E_0:=\mathbb R^k.
\]

Let
\[
G_j:\Lambda_T^d\longrightarrow E_j,
\qquad
j=0,\ldots,p,
\]
be strongly left-continuous non-anticipative functionals.

For each $\ell=0,\ldots,p-1$, let
$
\omega_\ell:[0,1]\longrightarrow[0,\infty)
$
be non-decreasing, with
\[
\omega_\ell(0)=0,
\qquad
\lim_{r\downarrow0}
\frac{\omega_\ell(r)}{r}=0.
\]

Assume that for every continuous path
$Y\in C([0,T],\mathbb R^d)$, every
$0\le s<t\le T$ such that
$Y|_{[s,t]}\in C^\gamma$, every
$\ell=0,\ldots,p-1$, and every compact
$K\subset\mathbb R^d$, there exists
$C_{\ell,T,K}>0$ such that, whenever
\[
Y([0,t])\subset K,
\qquad
\rho_{s,t}(Y)
:=
\|Y\|_{\gamma;[s,t]}|t-s|^\gamma
\le1,
\]
one has
\begin{align}
\Bigg\|
&G_\ell(t,Y_t)-G_\ell(s,Y_s)
-
\sum_{j=1}^{p-\ell}
\frac1{j!}
G_{\ell+j}(s,Y_s)
[Y_{s,t}^{\otimes j}]
\Bigg\|_{E_\ell}
\le
C_{\ell,T,K}
\left(
|t-s|+
\omega_\ell(\rho_{s,t}(Y))
\right).
\label{eq:abstract-modulus-expansion}
\end{align}
Then, for every continuous stopped path
$(t,X)\in W_T^d$ with $t>0$, every
$a\in\mathbb R^d$, and every
$\ell=0,\ldots,p-1$, the map
$
h\in \mathbb R^d\mapsto G_\ell(t,X_t^{a+h})\in 
 E_\ell,
$
is differentiable at $h=0$, and
\[
D_h
G_\ell(t,X_t^{a+h})\big|_{h=0}
=
\iota_\ell
\bigl(G_{\ell+1}(t,X_t^a)\bigr),
\]
where
$
\iota_\ell:
E_{\ell+1}
\longrightarrow
\operatorname{Lin}(\mathbb R^d,E_\ell)
$ is given by
\[
[\iota_\ell(A)h](v_1,\ldots,v_\ell)
:=
A(v_1,\ldots,v_\ell,h).
\]
Consequently,
\[
\nabla_\omega G_\ell(t,X_t^a)
=
G_{\ell+1}(t,X_t^a),
\qquad
\ell=0,\ldots,p-1,
\]
and the map $
a\longmapsto G_0(t,X_t^a)$
is $p$ times differentiable with
\[
\partial^j_{a}
 G_0(t,X_t^a)(a)
=
G_j(t,X_t^a),
\qquad
j=1,\ldots,p.
\]
In particular,
\[
\nabla_\omega^jG_0(t,X)=G_j(t,X),
\qquad
j=1,\ldots,p.
\]
If the corresponding identities are assumed at $t=0$ for
the constant paths $
\overline{x+a},$
then the same conclusions hold at $t=0$.
\end{theorem}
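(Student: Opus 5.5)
The plan is to realize the vertical perturbation $X_t^{a+h}$ as a limit of continuous paths whose last piece moves linearly, apply the expansion \eqref{eq:abstract-modulus-expansion} on that piece, and pass to the limit using strong left continuity.

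\emph{Construction.} Fix $(t,X)\in W_T^d$ with $t>0$, $a,h\in\mathbb R^d$, and $\ell\le p-1$. For small $\varepsilon>0$, define a continuous path $Y^\varepsilon$ by four pieces:
\begin{itemize}
\item $Y^\varepsilon=X$ on $[0,t-2\varepsilon]$;
\item on $[t-2\varepsilon,t-\varepsilon]$, $Y^\varepsilon$ interpolates linearly from $X(t-2\varepsilon)$ to $X(t)+a$;
\item on $[t-\varepsilon,t]$, $Y^\varepsilon$ interpolates linearly from $X(t)+a$ to $X(t)+a+h$;
\item $Y^\varepsilon$ is constant after $t$.
\end{itemize}
Set $s_\varepsilon:=t-\varepsilon$. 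Then $Y^\varepsilon_{s_\varepsilon}$ converges to $X_t^a$ in the sense of Definition~\ref{def:strong-left-continuity}: the paths are uniformly bounded, converge pointwise on $[0,t]$ (at $u=t$ the value is $X(t)+a$), and converge uniformly on every $[0,t']$ with $t'<t$, by continuity of $X$. Similarly $Y^\varepsilon_t\to X_t^{a+h}$. Strong left continuity gives $G_j(s_\varepsilon,Y^\varepsilon_{s_\varepsilon})\to G_j(t,X^a_t)$ for all $j$. For $Y^\varepsilon_t$ at the same time $t$, the convergence is also covered, taking $r_n=t$ in Definition~\ref{def:strong-left-continuity}. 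All these paths stay in a fixed compact $K$ depending on $X,a$ and $|h|\le1$.

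\emph{Applying the estimate.} On $[s_\varepsilon,t]$ the path $Y^\varepsilon$ is Lipschitz with slope $|h|/\varepsilon$. Hence $\|Y^\varepsilon\|_{\gamma;[s_\varepsilon,t]}\le |h|\varepsilon^{-1}\varepsilon^{1-\gamma}=|h|\varepsilon^{-\gamma}$, so that $\rho_{s_\varepsilon,t}(Y^\varepsilon)\le|h|\le1$. The increment is exactly $Y^\varepsilon_{s_\varepsilon,t}=h$. Inserting this into \eqref{eq:abstract-modulus-expansion} gives
\[
\Bigl\|G_\ell(t,Y^\varepsilon_t)-G_\ell(s_\varepsilon,Y^\varepsilon_{s_\varepsilon})-\sum_{j=1}^{p-\ell}\tfrac1{j!}G_{\ell+j}(s_\varepsilon,Y^\varepsilon_{s_\varepsilon})[h^{\otimes j}]\Bigr\|\le C_{\ell,T,K}\bigl(\varepsilon+\omega_\ell(|h|)\bigr).
\]
The constant is uniform in $\varepsilon$ because $K$ is fixed; this uniformity is essential. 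Letting $\varepsilon\downarrow0$ yields
\[
\Bigl\|G_\ell(t,X_t^{a+h})-G_\ell(t,X_t^a)-\sum_{j=1}^{p-\ell}\tfrac1{j!}G_{\ell+j}(t,X^a_t)[h^{\otimes j}]\Bigr\|\le C\,\omega_\ell(|h|).
\]
The terms with $j\ge2$ are $O(|h|^2)$, and $\omega_\ell(|h|)=o(|h|)$. This gives differentiability at $h=0$ with derivative $h\mapsto G_{\ell+1}(t,X^a_t)[h]=\iota_\ell(G_{\ell+1}(t,X^a_t))h$.

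\emph{Iterating and the main obstacle.} Since the first-order statement holds at every base point $a$ and every level $\ell\le p-1$, the map $a\mapsto G_\ell(t,X^a_t)$ is differentiable everywhere with derivative $a\mapsto G_{\ell+1}(t,X^a_t)$. Induction on $j$ then gives that $a\mapsto G_0(t,X^a_t)$ is $p$ times differentiable, each derivative being the derivative of the previous one, with $D^jG_0(t,X_t^a)=G_j(t,X_t^a)$. Taking $a=0$ gives $\nabla^j_\omega G_0=G_j$. The subtle point is the order of contracted variables: the last variable of $G_{\ell+1}$ must correspond to the newest differentiation, which matches the definition of $\iota_\ell$. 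Symmetry of the $G_j$ then follows automatically from symmetry of Fréchet derivatives. The step I expect to require the most care is checking that the hypotheses of strong left continuity hold exactly for the pair $Y^\varepsilon_{s_\varepsilon}\to X^a_t$, in particular that pointwise convergence at $u=t$ is allowed despite the jump of the limit.

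The case $t=0$ cannot be reached by left approximation, which is why the statement adds the assumption there. With it, the identities at $t=0$ for the constant paths $\overline{x+a}$ are exactly what is needed, so the same argument applies.
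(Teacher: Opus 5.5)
Your proposal is correct and follows essentially the same route as the paper: a two-ramp continuous path that converges strongly from the left to $X_t^a$ when stopped at $t-\varepsilon$ and to $X_t^{a+h}$ when stopped at $t$, the compatibility estimate applied on $[t-\varepsilon,t]$ with $\rho\le|h|$ and a constant uniform in $\varepsilon$, passage to the limit, and iteration of the first-order identity in $a$. The differences are only cosmetic and affect neither the bound $\rho_{t-\varepsilon,t}\le|h|$ nor the limits: you use a linear final ramp where the paper uses $\bigl((r-(t-\varepsilon))/\varepsilon\bigr)^\gamma h$, and you aim the first ramp at $X(t)+a$ where the paper uses $X(t-2\varepsilon)+a$.
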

\begin{proof}
Fix $\ell$, $(t,X)\in W_T^d$ with $t>0$, and
$a\in\mathbb R^d$. Let $|h|\le1$ and
$\varepsilon\in(0,t/2)$, and use the same two-ramp path
\[
z^{\varepsilon,a,h}(r)
=
\begin{cases}
X(r),
&0\le r<t-2\varepsilon,
\\[1mm]
X(t-2\varepsilon)
+\dfrac{r-(t-2\varepsilon)}{\varepsilon}a,
&t-2\varepsilon\le r<t-\varepsilon,
\\[2mm]
X(t-2\varepsilon)+a+
\left(
\dfrac{r-(t-\varepsilon)}{\varepsilon}
\right)^\gamma h,
&t-\varepsilon\le r\le t,
\\[2mm]
X(t-2\varepsilon)+a+h,
&t<r\le T.
\end{cases}
\]
On $[t-\varepsilon,t]$,
\[
\|z^{\varepsilon,a,h}\|_{\gamma;[t-\varepsilon,t]}
\varepsilon^\gamma
\le|h|.
\]
For fixed $a$ and $|h|\le1$, the ranges of these paths are
contained in one compact set $K$ for all sufficiently small
$\varepsilon$. Hence
\eqref{eq:abstract-modulus-expansion} gives
\begin{align*}
\Bigg\|
&G_\ell(t,z_t^{\varepsilon,a,h})
-G_\ell(t-\varepsilon,z_{t-\varepsilon}^{\varepsilon,a,h})
-
\sum_{j=1}^{p-\ell}
\frac1{j!}
G_{\ell+j}
(t-\varepsilon,z_{t-\varepsilon}^{\varepsilon,a,h})
[h^{\otimes j}]
\Bigg\|
\le
C_{\ell,T,K}
\left(
\varepsilon+\omega_\ell(|h|)
\right).
\end{align*}
Strong left continuity and $\varepsilon\downarrow0$ yield
\begin{align}
\Bigg\|
&G_\ell(t,X_t^{a+h})-G_\ell(t,X_t^a)
-
\sum_{j=1}^{p-\ell}
\frac1{j!}
G_{\ell+j}(t,X_t^a)[h^{\otimes j}]
\Bigg\|
\le
C_{\ell,T,K}\omega_\ell(|h|).
\label{eq:fibre-Taylor-modulus}
\end{align}

Isolating the linear term gives
\begin{align*}
&
\frac{
\|G_\ell(t,X_t^{a+h})
-G_\ell(t,X_t^a)
-G_{\ell+1}(t,X_t^a)[h]\|
}{|h|}
\le
C_{\ell,T,K}
\frac{\omega_\ell(|h|)}{|h|}
+
\sum_{j=2}^{p-\ell}
\frac{
\|G_{\ell+j}(t,X_t^a)\|
}{j!}
|h|^{j-1}.
\end{align*}
The right-hand side converges to zero as $h\to0$. Hence
\[
DG_\ell(t,X_t^a)
=
\iota_\ell(G_{\ell+1}(t,X_t^a)).
\]

Now set
$
g_\ell(a):=G_\ell(t,X_t^a).$
We have
$
Dg_\ell(a)=\iota_\ell(g_{\ell+1}(a)),
\ell=0,\ldots,p-1.
$
Iterating gives
$
D^jg_0(a)=G_j(t,X_t^a),
j=1,\ldots,p,
$
with the natural multilinear identification. This proves the
claim.
\end{proof}
\begin{corollary}
The conclusion of
Theorem~\ref{thm:representation}
holds in particular when
$\omega_\ell(r)=r^{q_\ell},
q_\ell>1.$
Thus the previous power-type formulation is a special case of
the representation theorem.
\end{corollary}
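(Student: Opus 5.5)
The corollary is a direct specialization of Theorem~\ref{thm:representation}, so I plan to check that the power moduli $\omega_\ell(r)=r^{q_\ell}$ meet its hypotheses and then invoke the theorem.

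\textbf{Step 1: the modulus conditions.} Restricted to $[0,1]$, each $\omega_\ell(r)=r^{q_\ell}$ with $q_\ell>1$ has the required properties:
\begin{itemize}
\item it is non-negative and non-decreasing;
\item it satisfies $\omega_\ell(0)=0$;
\item it satisfies $\omega_\ell(r)/r=r^{q_\ell-1}\to0$ as $r\downarrow0$, precisely because $q_\ell>1$.
\end{itemize}
The strict inequality $q_\ell>1$ is the only place any assumption on the exponent enters. Proposition~\ref{prop:functional-taylor-expansion} guarantees it for the particular exponents $q_\ell=(p-\ell+\gamma)/((p-\ell)(1-\gamma)+\gamma)$. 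Indeed, the numerator minus the denominator equals $(p-\ell)\gamma>0$.

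\textbf{Step 2: matching the expansion hypothesis.} Suppose $G_0,\ldots,G_p$ are strongly left-continuous. Suppose further that they satisfy, for continuous $Y$ with $Y|_{[s,t]}\in C^\gamma$, $Y([0,t])\subset K$ and $\rho_{s,t}(Y)\le1$, the power-type estimate
\[
\Big\|G_\ell(t,Y_t)-G_\ell(s,Y_s)-\sum_{j=1}^{p-\ell}\tfrac1{j!}G_{\ell+j}(s,Y_s)[Y_{s,t}^{\otimes j}]\Big\|\le C_{\ell,T,K}\big(|t-s|+\rho_{s,t}(Y)^{q_\ell}\big).
\]
This is literally \eqref{eq:abstract-modulus-expansion} with $\omega_\ell(r)=r^{q_\ell}$. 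This is the form of \eqref{eq:functional-Taylor-uniform-remainder}, with the constant even uniform in $K$.

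\textbf{Step 3: conclusion.} All hypotheses of Theorem~\ref{thm:representation} therefore hold, and its conclusions follow verbatim. On vertical fibres over continuous stopped paths,
\[
G_j(t,X_t^a)=\partial_a^jG_0(t,X_t^a), \qquad \nabla_\omega^jG_0=G_j,\quad j=1,\ldots,p.
\]
In the proof of the theorem, the key estimate $\omega_\ell(|h|)/|h|\to0$ becomes $|h|^{q_\ell-1}\to0$.

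There is no real obstacle. The only point deserving care is to note that the condition $q_\ell>1$ is genuinely needed. With $q_\ell=1$, the fibre Taylor bound \eqref{eq:fibre-Taylor-modulus} would only control the difference quotient up to $O(1)$, and the linear coefficient would not be identified.
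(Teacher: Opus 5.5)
Your proposal is correct and is the same argument the paper relies on; the paper writes no proof because the corollary is immediate: $r\mapsto r^{q_\ell}$ is non-decreasing on $[0,1]$, vanishes at $0$, and satisfies $r^{q_\ell}/r=r^{q_\ell-1}\to0$ because $q_\ell>1$, so Theorem~\ref{thm:representation} applies verbatim. You were also right to keep strong left continuity as a separate hypothesis rather than inferring it from the setting of Proposition~\ref{prop:functional-taylor-expansion}, since horizontal and supremum-norm Lipschitz continuity do not imply it.
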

\subsection{Rigidity, symmetry and uniqueness}\label{sec:rigidity}
\begin{corollary}[Symmetry]
\label{cor:symmetry}
Under the assumptions of
Theorem~\ref{thm:representation}, for every
$(t,X)\in W_T^d$ with $t>0$, every $a\in\mathbb R^d$, and
$j=1,\ldots,p$,
\[
G_j(t,X_t^a)
\in
\operatorname{Lin}_{\mathrm{sym}}
\bigl(
(\mathbb R^d)^{\otimes j},
\mathbb R^k
\bigr).
\]
\end{corollary}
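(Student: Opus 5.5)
The plan is to read symmetry off the representation $G_j(t,X_t^a)=D^j g_0(a)$ in Theorem~\ref{thm:representation}, where $g_0(b):=G_0(t,X_t^b)$. The only input needed beyond that is the classical symmetry of higher Fréchet derivatives (Schwarz's theorem) for maps between finite-dimensional spaces. One subtlety must be handled. Schwarz's theorem for the $j$-th derivative at a point needs $D^{j-1}g_0$ to be differentiable at that point, and then $D^j g_0(a)$ is symmetric. We do not need $D^jg_0$ to be continuous, because the version of Schwarz's theorem using only Fréchet differentiability of $D^{j-1}$ at the point suffices.

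First, fix $(t,X)\in W_T^d$ with $t>0$ and $a\in\mathbb R^d$, and set $g_\ell(b):=G_\ell(t,X_t^b)$ for $\ell=0,\ldots,p$. Theorem~\ref{thm:representation} gives, for every $b\in\mathbb R^d$ and $\ell\le p-1$, that $g_\ell$ is differentiable at $b$ with $Dg_\ell(b)=\iota_\ell(g_{\ell+1}(b))$. Since this holds at every $b$, not only at $a$, we can argue inductively. The map $g_0$ is differentiable everywhere with $Dg_0=\iota_0\circ g_1$. The map $g_1$ is differentiable everywhere, so $Dg_0$ is differentiable everywhere. Continuing in this way, $D^{j-1}g_0$ is differentiable at $a$ for each $j\le p$, with $D^jg_0(a)$ identified with $G_j(t,X_t^a)$ under the iterated identifications $\iota$. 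The one point to check here is that the ordering conventions agree. The contraction $\iota_\ell$ places the new direction $h$ in the last slot. Consequently $D^jg_0(a)(h_1,\ldots,h_j)=G_j(t,X_t^a)(h_1,\ldots,h_j)$, with $h_1$ the first differentiation direction. This is a routine bookkeeping check.

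Second, apply the pointwise Schwarz lemma. If $f:\mathbb R^d\to F$ has $D^{j-1}f$ Fréchet differentiable at $a$ and $D^{j-2}f$ differentiable in a neighbourhood, then $D^jf(a)$ is symmetric. The standard proof covers $j=2$: estimate the second difference $f(a+sh+sk)-f(a+sh)-f(a+sk)+f(a)$ via the mean value inequality applied to $\phi(\tau)=f(a+\tau h+sk)-f(a+\tau h)$. This uses only the differentiability of $Df$ at $a$, and it shows that both $D^2f(a)(h,k)$ and $D^2f(a)(k,h)$ are the limit of the second difference divided by $s^2$. For general $j$, apply the $j=2$ case to $D^{j-2}g_0$ evaluated on fixed vectors $(h_1,\ldots,h_{j-2})$. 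This shows that $D^jg_0(a)$ is symmetric in its last two slots. Symmetry in the first $j-1$ slots follows by induction, since $D^{j-1}g_0(b)$ is symmetric for all $b$ and the symmetry passes to its derivative. Transpositions of adjacent slots generate the symmetric group, so this gives full symmetry.

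The main obstacle is confirming that the differentiability supplied by Theorem~\ref{thm:representation} is genuine Fréchet differentiability at every point of the fibre. It is, since the fibre Taylor estimate \eqref{eq:fibre-Taylor-modulus} holds uniformly for all base points $a$, with a constant depending on a compact set. This lets the mean value argument be run on a neighbourhood of $a$. Once this is checked, the conclusion $G_j(t,X_t^a)\in\operatorname{Lin}_{\mathrm{sym}}((\mathbb R^d)^{\otimes j},\mathbb R^k)$ follows, and it matches the remark after Definition~\ref{def:vertical-derivatives} that higher vertical derivatives are symmetric.
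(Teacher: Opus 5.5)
Your proposal is correct and follows the paper's proof, which reads $G_j(t,X_t^a)=D^j[b\mapsto G_0(t,X_t^b)](a)$ off Theorem~\ref{thm:representation} and then invokes the symmetry of higher Fr\'echet derivatives. Your pointwise Schwarz argument simply makes explicit why continuity of $D^jg_0$ is not needed; note, however, that differentiability of every $g_\ell$ at every point of the fibre is already part of the theorem's conclusion, so the uniformity of the constant in \eqref{eq:fibre-Taylor-modulus} that you invoke plays no role in the mean value step.
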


\begin{proof}
By Theorem~\ref{thm:representation},
\[
G_j(t,X_t^a)
=
D^j
\bigl[
b\mapsto G_0(t,X_t^b)
\bigr](a).
\]
Higher Fr\'echet derivatives are symmetric multilinear maps.
\end{proof}
\begin{corollary}[Uniqueness of compatible coefficient hierarchies]
\label{cor:uniqueness}
Let
\[
(G_0,G_1,\ldots,G_p)
\qquad\text{and}\qquad
(G_0,\widetilde G_1,\ldots,\widetilde G_p)
\]
be two strongly left-continuous families satisfying the
assumptions of
Theorem~\ref{thm:representation}, possibly with
different moduli $\omega_\ell$.
Then, for every continuous stopped path
$(t,X)\in W_T^d$ with $t>0$, every
$a\in\mathbb R^d$, and every $j=1,\ldots,p$,
\[
G_j(t,X_t^a)
=
\widetilde G_j(t,X_t^a).
\]
\end{corollary}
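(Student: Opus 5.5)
The plan is to apply Theorem~\ref{thm:representation} separately to the two families, which share the base functional $G_0$. Nothing is needed beyond the fact that a Fréchet derivative, when it exists, is unique. Fix a continuous stopped path $(t,X)\in W_T^d$ with $t>0$ and a vector $a\in\mathbb R^d$.

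For the family $(G_0,G_1,\ldots,G_p)$, with its moduli $\omega_\ell$, Theorem~\ref{thm:representation} shows that $b\mapsto G_0(t,X_t^b)$ is $p$ times differentiable and that
\[
G_j(t,X_t^a)=D^j\bigl[b\mapsto G_0(t,X_t^b)\bigr](a),\qquad j=1,\ldots,p.
\]
For the family $(G_0,\widetilde G_1,\ldots,\widetilde G_p)$, with moduli $\widetilde\omega_\ell$, the same theorem gives the analogous identity with $\widetilde G_j$ on the left-hand side. The function $b\mapsto G_0(t,X_t^b)$ is the same in both cases, because it depends only on $G_0$. Its $j$-th Fréchet derivative at $a$ is therefore a single well-defined symmetric multilinear map, so the two right-hand sides agree. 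Hence $G_j(t,X_t^a)=\widetilde G_j(t,X_t^a)$.

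Alternatively, one can argue by induction on $j$ using the first-order identity $Dg_\ell(a)=\iota_\ell(g_{\ell+1}(a))$, where $g_\ell(a):=G_\ell(t,X_t^a)$. Uniqueness of $Dg_0(a)$ forces $\iota_0(G_1(t,X_t^a))=\iota_0(\widetilde G_1(t,X_t^a))$, and $\iota_\ell$ is injective, so $G_1=\widetilde G_1$ on the fibre. Since the two level-one functionals then coincide as functions of $a$, differentiating again gives $G_2=\widetilde G_2$ on the fibre, and so on up to $j=p$.

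There is no real obstacle in this argument. The one point to check is that the application of Theorem~\ref{thm:representation} to each family is legitimate. The hypothesis \eqref{eq:abstract-modulus-expansion} is stated family by family, each with its own constants and its own moduli. The conclusion of the theorem depends on the moduli only through the condition $\omega_\ell(r)=o(r)$, so using different moduli for the two families is harmless.
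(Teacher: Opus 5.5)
Your proof is correct and is exactly the paper's argument: by Theorem~\ref{thm:representation}, both $G_j(t,X_t^a)$ and $\widetilde G_j(t,X_t^a)$ equal $D^j\bigl[b\mapsto G_0(t,X_t^b)\bigr](a)$, and this derivative depends only on the shared base functional $G_0$. Your inductive variant, which uses the injectivity of $\iota_\ell$, and your remark that the moduli enter only through the condition $\omega_\ell(r)=o(r)$ are both sound, but neither is needed.
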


\begin{proof}
Both sides are equal to $
\partial^j_b
\bigl[
b\mapsto G_0(t,X_t^b)
\bigr](a).$
\end{proof}
The preceding results show that the relation between
functional calculus and controlled paths has two complementary
directions.

Under weak horizontal regularity, the  vertical
derivatives generate a higher-order controlled hierarchy,
although in general with weaker, level-dependent remainder
regularity. In the first-order regime, a second-order 
sewing argument recovers the classical controlled-path
regularity for $\gamma>\sqrt2-1$.

Conversely, the coefficient hierarchy is rigid once it is
required to arise from non-anticipative functionals that are
compatible across controls: any strongly left-continuous
hierarchy satisfying an $o(r)$ controlled Taylor estimate is
necessarily the vertical-derivative hierarchy of its
zeroth-order component. In particular, its coefficients are
symmetric and uniquely determined.
\section{Functional transformations of compatible controlled families}
\label{sec.chainrule}

It is well known that controlled paths are stable under composition with smooth functions \cite[Sec. 7.4]{frizhairer}, and that
the Gubinelli coefficients of their local expansions may be computed using
the usual chain rule \cite[Sec. 7.3 \& 7.6]{frizhairer}.

A first-order functional analogue of this result was established in
\cite[Theorem~3.17]{ananova2023}: if $(Y,Y')$ is controlled by a rough path
$X$ and $F$ is a regular non-anticipative functional, then
\[
\bigl(F(\cdot,Y),\nabla_\omega F(\cdot,Y)Y'\bigr)
\]
is again controlled by $X$.

We establish in this section the corresponding higher-order
statement for compatible controlled path families. The main result is a
stability theorem (Theorem \ref{thm:functional-chain-rule}): if two compatible coefficient families are composed
through an admissible non-anticipative transformation, then the coefficients
obtained from the higher-order chain rule again form a compatible coefficient
family. Consequently, along each H\"older control they define an order-$p$
controlled expansion, and Theorem~\ref{thm:representation} identifies the
transformed coefficients with the vertical derivatives of the transformed
zeroth-order functional.

\subsection{Chain rule and stability of compatible families}

\begin{definition}[Admissible causal transformation]
\label{def:admissible-causal}
Let
$
G_0:\Lambda_T^d\longrightarrow\mathbb R^m
$
be non-anticipative and define
$
\Gamma_{G_0}(X)(t):=G_0(t,X_t).
$
We call $\Gamma_{G_0}$ admissible if:
\begin{enumerate}
\item $\Gamma_{G_0}$ maps c\`adl\`ag paths to c\`adl\`ag paths
      and continuous paths to continuous paths;
\item whenever
      $(t_n,X^n_{t_n})$ converges strongly from the left to
      $(t,X_t)$ in the sense of
      Definition~\ref{def:strong-left-continuity}, the stopped
      paths
      $
      (t_n,\Gamma_{G_0}(X^n)_{t_n})
      $
      converge strongly from the left to
      $
      (t,\Gamma_{G_0}(X)_t).
      $
\end{enumerate}
\end{definition}
Sufficient conditions for admissibility of a map are given e.g. in \cite[Lemma 2.5, Thm 2.6]{CF10B}.
\begin{theorem}[Chain rule and stability under functional transformations]
\label{thm:functional-chain-rule}
\label{thm:functional-stability}
Let $p\in\mathbb N$ and $\gamma\in(0,1)$ and
$
(G_0,\ldots,G_p),(F_0,\ldots,F_p)$
be families of non-anticipative functionals
\[
G_j:
\Lambda_T^d
\longrightarrow
\operatorname{Lin}_{\mathrm{sym}}
\bigl((\mathbb R^d)^{\otimes j},\mathbb R^m\bigr),\qquad
F_j:
\Lambda_T^m
\longrightarrow
\operatorname{Lin}_{\mathrm{sym}}
\bigl((\mathbb R^m)^{\otimes j},\mathbb R^n\bigr).
\]
which are
boundedness-preserving and satisfy the assumptions of
Theorem~\ref{thm:representation}, with remainder moduli
$\omega_j^G$ and $\omega_j^F$, respectively, and assume that
$\Gamma_{G_0}$ is admissible in the sense of
Definition~\ref{def:admissible-causal}.
For $X\in D([0,T],\mathbb R^d)$ set
\[
Y^X:=\Gamma_{G_0}(X),
\qquad
Y^X(t)=G_0(t,X_t).
\]
Then $(K_0,\ldots,K_p)$ defined by \begin{equation}
K_0(t,X_t):=F_0(t,Y^X_t),\quad K_\ell(t,X_t)[v_1,\ldots,v_\ell]
:=
\sum_{\pi\in\mathcal P_\ell}
F_{|\pi|}(t,Y^X_t)
\left[
G_{|B|}(t,X_t)[v_B]
\right]_{B\in\pi}
\label{eq:17}
\end{equation}
\begingroup
\makeatletter
\edef\@currentlabel{\theequation}
\label{eq:*}
\makeatother
\endgroup
where $\mathcal P_\ell$ is the set of partitions of
$\{1,\ldots,\ell\}$ and for
$B=\{i_1,\ldots,i_r\}$,
\[
G_{|B|}(t,X_t)[v_B]
:=
G_r(t,X_t)[v_{i_1},\ldots,v_{i_r}]
\]
is again a compatible coefficient family in the sense of
Theorem~\ref{thm:representation}: there exists a non-decreasing modulus
\begin{equation}
\omega_\ell^K:[0,1]\longrightarrow[0,\infty),
\qquad
\omega_\ell^K(0)=0,
\qquad
\frac{\omega_\ell^K(r)}r\mathop{\longrightarrow}^{r\downarrow0}0,
\label{eq:18}
\end{equation}
such that, whenever $X\in C^\gamma([s,t])$,
its range up to $t$ is contained in a fixed compact set, and
\[
\rho_{s,t}(X)
:=
\|X\|_{\gamma;[s,t]}|t-s|^\gamma
\le1
\] we have
\begin{equation}
\begin{aligned}
&
\left\|
K_\ell(t,X_t)-K_\ell(s,X_s)
-
\sum_{j=1}^{p-\ell}
\frac1{j!}
K_{\ell+j}(s,X_s)[X_{s,t}^{\otimes j}]
\right\|\le
C_{\ell,T,K}
\left(
|t-s|+
\omega_\ell^K(\rho_{s,t}(X))
\right)
\end{aligned}
\label{eq:19}
\end{equation}
Thus, for every such control $X$, the  hierarchy
$
t\longmapsto
\bigl(K_0(t,X_t),\ldots,K_p(t,X_t)\bigr)$
defines an order-$p$ controlled path hierarchy along $X$,  compatible in the
sense of \eqref{eq:19}. Moreover
\[
K_\ell=\nabla_\omega^\ell K_0,
\qquad
\ell=1,\ldots,p
\]
on the set of continuous paths.
\end{theorem}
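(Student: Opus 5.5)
The plan is to prove the compatible estimate \eqref{eq:19} by composing the two Taylor expansions, in the spirit of Fa\`a di Bruno. The identification $K_\ell=\nabla_\omega^\ell K_0$ then comes directly from Theorem~\ref{thm:representation}. Strong left-continuity of the $K_\ell$, which that theorem requires, comes from admissibility of $\Gamma_{G_0}$. If $(t_n,X^n_{t_n})\to(t,X_t)$ strongly from the left, then $(t_n,Y^{X^n}_{t_n})\to(t,Y^X_t)$ strongly from the left, so $F_j(t_n,Y^{X^n}_{t_n})\to F_j(t,Y^X_t)$ and $G_r(t_n,X^n_{t_n})\to G_r(t,X_t)$. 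Since \eqref{eq:17} is a finite multilinear combination, $K_\ell$ is strongly left-continuous.

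\textbf{Step 1 (outer expansion).} Fix a $\gamma$-H\"older control $X$ on $[s,t]$ with range in a compact set and $\rho:=\rho_{s,t}(X)\le1$. Set $Y=Y^X$. By \eqref{eq:abstract-modulus-expansion} for $G_0$,
\[
Y_{s,t}=\sum_{j=1}^{p}\tfrac1{j!}G_j(s,X_s)[X_{s,t}^{\otimes j}]+O(|t-s|+\omega^G_0(\rho)).
\]
Boundedness preservation gives $|Y_{s,t}|\lesssim \rho+|t-s|$ and a compact range for $Y$. The catch is that $Y$ need not be $\gamma$-H\"older with $\rho_{s,t}(Y)\le1$, so the hypothesis on $F$ cannot be applied to $Y$ directly. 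I would estimate $\|Y\|_{\gamma;[s,t]}$ by applying the same expansion on every subinterval $[u,v]\subset[s,t]$: since $\rho_{u,v}(X)\le\rho$, this gives $|Y_{u,v}|\lesssim \|X\|_{\gamma}|v-u|^\gamma+|v-u|$. Hence $\rho_{s,t}(Y)\lesssim\rho+|t-s|$. After shrinking, or by splitting $[s,t]$ into boundedly many pieces, we get $\rho_{s,t}(Y)\le1$. Then \eqref{eq:abstract-modulus-expansion} for $F_\ell$ along $Y$ gives
\[
F_\ell(t,Y_t)-F_\ell(s,Y_s)=\sum_{k=1}^{p-\ell}\tfrac1{k!}F_{\ell+k}(s,Y_s)[Y_{s,t}^{\otimes k}]+O\bigl(|t-s|+\omega^F_\ell(C(\rho+|t-s|))\bigr).
\]
The same expansions hold for all levels $G_r$.

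\textbf{Step 2 (combinatorics).} Write $K_\ell(t,X_t)-K_\ell(s,X_s)$ as a telescoping sum over the factors in \eqref{eq:17}. Each difference $F_{|\pi|}(t,\cdot)-F_{|\pi|}(s,\cdot)$ and $G_{|B|}(t,\cdot)-G_{|B|}(s,\cdot)$ is replaced by its Taylor polynomial plus remainder, and then $Y_{s,t}$ is substituted by its $G$-expansion in $X_{s,t}$. Collect all terms that are polynomial in $h:=X_{s,t}$ of degree $\le p-\ell$. The coefficient of $\frac1{j!}h^{\otimes j}$ must equal $K_{\ell+j}(s,X_s)$. This is the purely algebraic Fa\`a di Bruno identity for derivatives of $a\mapsto F(G(a))$, and I would verify it formally by noting that it is the identity satisfied by Taylor coefficients of compositions of polynomial maps; symmetry of $F_j,G_j$ (from Corollary~\ref{cor:symmetry}, or assumed) is needed here. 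Terms of degree $>p-\ell$ are bounded by $C|h|^{p-\ell+1}\le C\rho^{2}$, and cross terms with a remainder factor by $C(|t-s|+\omega(\cdot))$.

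\textbf{Step 3 (modulus).} Set
\[
\omega^K_\ell(r):=C\Bigl(r^2+\sum_j\omega^G_j(Cr)+\sum_j\omega^F_j(Cr)\Bigr)\wedge\text{const},
\]
made non-decreasing. It satisfies \eqref{eq:18}. The $|t-s|$ contamination inside $\omega^F(C(\rho+|t-s|))$ is handled with $\omega(a+b)\le\omega(2a)+\omega(2b)$ and $\omega(2b)\lesssim b$ for small $b$, since $\omega(r)/r\to0$ means $\omega(r)\le r$ near $0$. This absorbs it into $|t-s|$.

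The main obstacle is Step 1: controlling the H\"older regularity of the composed control $Y$ so that the $F$-hypothesis applies with $\rho_{s,t}(Y)\le1$ and $\rho_{s,t}(Y)\lesssim\rho_{s,t}(X)+|t-s|$. I would try to secure it from the level-$0$ expansion of $G$ on subintervals together with the boundedness of $G_1,\dots,G_p$ on compacts.
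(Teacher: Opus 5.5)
Your proposal is correct and follows essentially the paper's proof: it uses the same subinterval application of the level-zero $G$-estimate to get $\rho_{s,t}(Y^X)\lesssim\rho+|t-s|$, the same absorption of $\omega^F(C(\rho+|t-s|))$ into $C|t-s|+o(\rho)$, and the same admissibility argument for strong left-continuity before invoking Theorem~\ref{thm:representation}. The differences are in packaging. The paper organizes your Step~2 through truncated jets, inserting the intermediate jets $(\tau_y\mathbf F_s)\circ_p\mathbf G_t$ and $(\tau_{\widehat y}\mathbf F_s)\circ_p(\tau_x\mathbf G_s)$ and applying Lemma~\ref{lem:jet-composition}, which is proved by exactly your polynomial-composition argument. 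It also treats the remaining range $\rho\le1$ by boundedness preservation, enlarging the constant and the modulus, rather than by splitting $[s,t]$ into pieces.
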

Formula  \eqref{eq:17} is precisely the  functional chain rule
for the transformed functional
\[
K_0(t,X_t)=F_0\bigl(t,\Gamma_{G_0}(X)_t\bigr).
\]
\subsection{Finite-jet calculus and proof of Theorem \ref{thm:functional-chain-rule}}
Functional $p$-jets provide a natural setting for transformations of
controlled expansions \cite{cont2019}. Let $V,W$ be finite-dimensional
vector spaces and set
\[
\mathcal J^p(V,W)
:=
W\oplus
\bigoplus_{j=1}^p
\operatorname{Lin}_{\mathrm{sym}}(V^{\otimes j},W).
\]
For $
A=(A_0,\ldots,A_p)\in\mathcal J^p(V,W)
$
and $h\in V$, define the translated jet
\[
\tau_hA
:=
\bigl((\tau_hA)_0,\ldots,(\tau_hA)_p\bigr)
\]
\begin{equation}
{\rm by}\qquad (\tau_hA)_\ell
:=
\sum_{j=0}^{p-\ell}
\frac1{j!}
A_{\ell+j}[h^{\otimes j}],
\qquad
\ell=0,\ldots,p.
\label{eq:13}
\end{equation}
Here, as before, contraction is understood in the last $j$ variables.
If
\[
A\in\mathcal J^p(V,W),
\qquad
B\in\mathcal J^p(W,U),
\]
we denote by
\[
B\circ_p A\in\mathcal J^p(V,U)
\]
their truncated jet composition, defined by
\begin{equation}
\begin{aligned}
(B\circ_p A)_0&:=B_0,\qquad
(B\circ_p A)_\ell[v_1,\ldots,v_\ell]
:=
\sum_{\pi\in\mathcal P_\ell}
B_{|\pi|}
\left[
A_{|B|}[v_B]
\right]_{B\in\pi},
\qquad \ell\ge1.
\end{aligned}
\label{eq:14}
\end{equation}
Here $\mathcal P_\ell$ is the set of partitions of
$\{1,\ldots,\ell\}$. Formula \eqref{eq:14} is the usual higher-order
chain rule written at the level of finite jets.
We shall use the following elementary consequence:
\begin{lemma}[Translation and composition of truncated jets]
\label{lem:jet-composition}
Let $A\in\mathcal J^p(V,W)$ and
$B\in\mathcal J^p(W,U)$ belong to a bounded set. For $h\in V$ set
\[
\widehat y
:=
(\tau_hA)_0-A_0
=
\sum_{j=1}^p\frac1{j!}A_j[h^{\otimes j}].
\]
Then, for $\ell=0,\ldots,p-1$ and $|h|\le1$,
\begin{equation}
\left\|
\bigl((\tau_{\widehat y}B)\circ_p(\tau_hA)\bigr)_\ell
-
\bigl(\tau_h(B\circ_pA)\bigr)_\ell
\right\|
\le
C|h|^{p+1-\ell},
\label{eq:15}
\end{equation}
where $C$ is uniform on bounded subsets of the two jet spaces.
The maps
$
(A,B)\longmapsto B\circ_pA
$
and
$
(y,B)\longmapsto\tau_yB
$
are componentwise locally Lipschitz on bounded sets.
\end{lemma}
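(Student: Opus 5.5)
The plan is to reduce the statement to the classical Faà di Bruno formula for polynomials. Introduce the polynomial maps
\[
a(h):=\sum_{j=0}^p\frac1{j!}A_j[h^{\otimes j}],\qquad b(y):=\sum_{j=0}^p\frac1{j!}B_j[y^{\otimes j}],
\]
so that $A$ and $B$ are the $p$-jets at $0$ of $a:V\to W$ and $b:W\to U$. The following identifications are standard.
\begin{itemize}
\item The translated jet $\tau_hA$ coincides with the $p$-jet of $a$ at $h$, up to the top derivatives. Indeed, $D^\ell a(h)=\sum_{j=0}^{p-\ell}\frac1{j!}A_{\ell+j}[h^{\otimes j}]=(\tau_hA)_\ell$ exactly, because $a$ is a polynomial of degree $p$.
\item Likewise, $\tau_{\widehat y}B$ is the $p$-jet of $b$ at $\widehat y=a(h)-A_0$.
\item The truncated composition \eqref{eq:14} is exactly the multivariate Faà di Bruno formula. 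Note, however, that \eqref{eq:14} sets $(B\circ_pA)_0=B_0$, which is the jet of $b\circ(a-A_0)$. I will therefore work throughout with $\tilde a:=a-A_0$, which has the same derivatives of order $\ge1$.
\end{itemize}
With these identifications, $(\tau_{\widehat y}B)\circ_p(\tau_hA)$ is the $p$-jet of $c:=b\circ\tilde a$ at the point $h$, since $\tilde a(h)=\widehat y$ and $D^j\tilde a(h)=(\tau_hA)_j$ for $j\ge1$. Similarly, $B\circ_pA$ is the $p$-jet of $c$ at $0$.

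The comparison is then a Taylor estimate for $c$. Since $c$ is a polynomial of degree $\le p^2$, we have exactly
\[
D^\ell c(h)=\sum_{j=0}^{p^2-\ell}\frac1{j!}D^{\ell+j}c(0)[h^{\otimes j}].
\]
By definition, $\bigl(\tau_h(B\circ_pA)\bigr)_\ell$ is the truncation of this sum at $j\le p-\ell$. The difference is therefore
\[
\sum_{j>p-\ell}\frac1{j!}D^{\ell+j}c(0)[h^{\otimes j}].
\]
For $|h|\le1$, this is bounded by $C|h|^{p+1-\ell}$, where $C$ is a polynomial in the norms of the $A_j$ and $B_j$. Hence $C$ is uniform on bounded sets, which gives \eqref{eq:15}.

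I expect the main obstacle to be the bookkeeping at the top level. Applying Faà di Bruno with jets of the polynomials is exact, but one must check the multilinear convention. The paper contracts the last variables, and the coefficients must be symmetric for the identification $D^\ell a(h)=(\tau_hA)_\ell$ to hold. Symmetry is given by the definition of $\mathcal J^p$, so this should go through. One should also note that only derivatives of order $\le p$ of $b$ and $\tilde a$ enter $(B\circ_p\cdot)_\ell$ for $\ell\le p$. This is why truncating $b$ and $a$ at degree $p$ loses nothing.

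The local Lipschitz claims follow because both maps are given by polynomial expressions. Each component of $B\circ_pA$ is a finite sum of multilinear expressions in the $A_j$ and $B_j$, and $(\tau_yB)_\ell$ is a polynomial in $(y,B)$. Polynomials are Lipschitz on bounded sets.
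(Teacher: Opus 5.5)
Your proposal is correct and follows essentially the same route as the paper. The paper also forms the composite polynomial $Q(z)=P_B(P_A(z)-A_0)$ (your $c=b\circ\tilde a$). It identifies $(\tau_{\widehat y}B)\circ_p(\tau_hA)$ with $D^\ell Q(h)$ and $\tau_h(B\circ_pA)$ with $D^\ell Q_{\le p}(h)$, then bounds $D^\ell(Q-Q_{\le p})(h)$, which is exactly your Taylor tail $\sum_{j>p-\ell}\frac1{j!}D^{\ell+j}c(0)[h^{\otimes j}]$; the local Lipschitz claims are likewise deduced from polynomiality.
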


\begin{proof}
Associate with $A$ and $B$ their formal Taylor polynomials
\[
P_A(z)
=
A_0+\sum_{j=1}^p\frac1{j!}A_j[z^{\otimes j}],
\qquad
P_B(w)
=
B_0+\sum_{j=1}^p\frac1{j!}B_j[w^{\otimes j}].
\]
Set
\[
Q(z):=P_B\bigl(P_A(z)-A_0\bigr),
\]
and let $Q_{\le p}$ denote the truncation of $Q$ to total degree at most
$p$. By the higher-order chain rule, the $p$-jet of $Q$ at the origin is
$B\circ_pA$, hence
\[
\bigl(\tau_h(B\circ_pA)\bigr)_\ell
=
D^\ell Q_{\le p}(h).
\]
On the other hand, $\tau_hA$ is the $p$-jet of $P_A$ at $h$, while
$\tau_{\widehat y}B$ is the $p$-jet of $P_B$ at
$\widehat y=P_A(h)-A_0$. Applying the higher-order chain rule at $h$
gives
\[
\bigl((\tau_{\widehat y}B)\circ_p(\tau_hA)\bigr)_\ell
=
D^\ell Q(h).
\]
Therefore
\[
\bigl((\tau_{\widehat y}B)\circ_p(\tau_hA)\bigr)_\ell
-
\bigl(\tau_h(B\circ_pA)\bigr)_\ell
=
D^\ell(Q-Q_{\le p})(h).
\]
Since $Q-Q_{\le p}$ contains only monomials of total degree at least
$p+1$, its $\ell$th derivative is bounded by
$C|h|^{p+1-\ell}$ on bounded sets. This proves \eqref{eq:15}.

The local Lipschitz assertions follow immediately from the fact that both
jet composition and jet translation are polynomial maps between
finite-dimensional spaces.
\end{proof}

\begin{proof}[Proof of Theorem~\ref{thm:functional-chain-rule}]
Fix a compact set $K\subset\mathbb R^d$ and a continuous path $X$ whose
range up to time $t$ is contained in $K$. Write
\[
x:=X_{s,t},
\qquad
\rho:=\rho_{s,t}(X).
\]

For $r\in[0,T]$, introduce the jets
$
\mathbf G_r
:=
\bigl(G_0(r,X_r),\ldots,G_p(r,X_r)\bigr)
$
and
\[
\mathbf F_r
:=
\bigl(F_0(r,Y^X_r),\ldots,F_p(r,Y^X_r)\bigr).
\]
By \eqref{eq:17},
\begin{equation}
\mathbf K_r
=
\mathbf F_r\circ_p\mathbf G_r.
\label{eq:20}
\end{equation}

We first estimate the regularity of the transformed control $Y^X$. Let
$s\le u<v\le t$ and put
\[
\rho_{u,v}
:=
\|X\|_{\gamma;[s,t]}|v-u|^\gamma.
\]
Since $\rho_{u,v}\le\rho\le1$, the level-zero compatibility estimate for
$G$ gives
\begin{equation}
Y^X_{u,v}
=
\sum_{j=1}^p
\frac1{j!}
G_j(u,X_u)[X_{u,v}^{\otimes j}]
+
E^{G,0}_{u,v},
\label{eq:21}
\end{equation}
where
\begin{equation}
\|E^{G,0}_{u,v}\|
\le
C\left(
|v-u|+\omega_0^G(\rho_{u,v})
\right).
\label{eq:22}
\end{equation}
Since $\omega_0^G(r)=o(r)$, there exists $C_G$ such that
\begin{equation}
\omega_0^G(r)\le C_Gr,
\qquad 0\le r\le1.
\label{eq:23}
\end{equation}
The coefficient functionals $G_j$ are bounded on the compact family of
stopped paths under consideration. Hence
\[
|Y^X_{u,v}|
\le
C\left(|v-u|+\rho_{u,v}\right).
\]
Using
\[
|v-u|
\le
|t-s|^{1-\gamma}|v-u|^\gamma,
\]
we obtain
\[
\|Y^X\|_{\gamma;[s,t]}
\le
C\left(
\|X\|_{\gamma;[s,t]}
+
|t-s|^{1-\gamma}
\right).
\]
Consequently,
\begin{equation}
\rho_{s,t}(Y^X)
\le
C\left(\rho+|t-s|\right).
\label{eq:24}
\end{equation}

We first argue locally, with $\rho+|t-s|$ sufficiently small that the
right-hand side of \eqref{eq:24} is at most $1$. The compatibility estimate
for $F$ may then be applied along $Y^X$. The remaining range $\rho\le1$ is
handled at the end of the proof by boundedness preservation.

We now write the compatibility relations in jet form. Set
$
\widehat{\mathbf G}_t
:=
\tau_x\mathbf G_s.
$
Then, for $j=0,\ldots,p-1$,
\begin{equation}
\|(\mathbf G_t-\widehat{\mathbf G}_t)_j\|
\le
C\left(
|t-s|+\omega_j^G(\rho)
\right).
\label{eq:25}
\end{equation}
Set
\begin{equation}
\widehat y
:=
(\widehat{\mathbf G}_t)_0-(\mathbf G_s)_0
=
\sum_{j=1}^p
\frac1{j!}
G_j(s,X_s)[x^{\otimes j}].
\label{eq:26}
\end{equation}
Since
$
y:=Y^X_{s,t}
=
G_0(t,X_t)-G_0(s,X_s),
$
the level-zero estimate in \eqref{eq:25} gives
\begin{equation}
|y-\widehat y|
\le
C\left(
|t-s|+\omega_0^G(\rho)
\right).
\label{eq:27}
\end{equation}

Similarly, applying the compatibility estimate for $F$ along the path
$Y^X$ gives, for $j=0,\ldots,p-1$,
\begin{equation}
\left\|
(\mathbf F_t)_j
-
(\tau_y\mathbf F_s)_j
\right\|
\le
C\left(
|t-s|
+
\omega_j^F(\rho_{s,t}(Y^X))
\right).
\label{eq:28}
\end{equation}

Fix now $\ell\in\{0,\ldots,p-1\}$. By \eqref{eq:20},
$
(\mathbf K_t)_\ell
=
(\mathbf F_t\circ_p\mathbf G_t)_\ell.
$
On the other hand,
\[
(\tau_x\mathbf K_s)_\ell
=
\left(
\tau_x(\mathbf F_s\circ_p\mathbf G_s)
\right)_\ell.
\]
Insert the intermediate jets
$
(\tau_y\mathbf F_s)\circ_p\mathbf G_t
$
and
$
(\tau_{\widehat y}\mathbf F_s)\circ_p(\tau_x\mathbf G_s).
$
We obtain
\begin{equation}
\begin{aligned}
\|(\mathbf K_t)_\ell-(\tau_x\mathbf K_s)_\ell\|
&\le
\left\|
(\mathbf F_t\circ_p\mathbf G_t)_\ell
-
((\tau_y\mathbf F_s)\circ_p\mathbf G_t)_\ell
\right\|
\\
&\quad+
\left\|
((\tau_y\mathbf F_s)\circ_p\mathbf G_t)_\ell
-
((\tau_{\widehat y}\mathbf F_s)
   \circ_p(\tau_x\mathbf G_s))_\ell
\right\|
\\
&\quad+
\left\|
((\tau_{\widehat y}\mathbf F_s)
   \circ_p(\tau_x\mathbf G_s))_\ell
-
(\tau_x(\mathbf F_s\circ_p\mathbf G_s))_\ell
\right\|.
\end{aligned}
\label{eq:29}
\end{equation}

All jets involved remain in a bounded set. Indeed, boundedness preservation
of the $G_j$ implies that the range of $Y^X$ up to $t$ lies in a compact
subset of $\mathbb R^m$, and boundedness preservation of the $F_j$ then
gives uniform bounds for $\mathbf F$. Hence all local Lipschitz constants in
Lemma~\ref{lem:jet-composition} may be chosen uniformly.

By \eqref{eq:28} and local Lipschitz continuity of jet composition, the first
term on the right-hand side of \eqref{eq:29} is bounded by
\begin{equation}
C\left(
|t-s|
+
\max_{0\le j\le\ell}
\omega_j^F(\rho_{s,t}(Y^X))
\right).
\label{eq:30}
\end{equation}
For the second term in \eqref{eq:29}, first change $\mathbf G_t$ into
$\tau_x\mathbf G_s$ and then replace $y$ by $\widehat y$. Using
\eqref{eq:25}, \eqref{eq:27}, and the local Lipschitz properties in
Lemma~\ref{lem:jet-composition}, we obtain
\begin{equation}
C\left(
|t-s|
+
\max_{0\le j\le\ell}\omega_j^G(\rho)
\right).
\label{eq:31}
\end{equation}

Finally, Lemma~\ref{lem:jet-composition} gives
\[
\left\|
((\tau_{\widehat y}\mathbf F_s)
   \circ_p(\tau_x\mathbf G_s))_\ell
-
(\tau_x(\mathbf F_s\circ_p\mathbf G_s))_\ell
\right\|
\le
C|x|^{p+1-\ell}.
\]
Since $|x|\le\rho$,
\begin{equation}
|x|^{p+1-\ell}
\le
\rho^{p+1-\ell}.
\label{eq:32}
\end{equation}

Combining \eqref{eq:29}--\eqref{eq:32}, we conclude that
\begin{equation}
\begin{aligned}
\|(\mathbf K_t)_\ell-(\tau_x\mathbf K_s)_\ell\|
\le C\Bigg(
&|t-s|
+
\max_{0\le j\le\ell}\omega_j^G(\rho)
+
\max_{0\le j\le\ell}
\omega_j^F(\rho_{s,t}(Y^X))
+
\rho^{p+1-\ell}
\Bigg).
\end{aligned}
\label{eq:33}
\end{equation}

It remains only to express the outer modulus in terms of $\rho$. Define
\[
\Omega_\ell^F(r)
:=
\max_{0\le j\le\ell}\omega_j^F(r),
\qquad
\Omega_\ell^G(r)
:=
\max_{0\le j\le\ell}\omega_j^G(r).
\]
Both are non-decreasing and satisfy
\begin{equation}
\Omega_\ell^F(r)=o(r),
\qquad
\Omega_\ell^G(r)=o(r).
\label{eq:34}
\end{equation}
By \eqref{eq:24},
\[
\rho_{s,t}(Y^X)
\le
C_0(\rho+|t-s|).
\]

For sufficiently small $\rho+|t-s|$, we distinguish two cases. If
$|t-s|\le\rho$, then
\[
\Omega_\ell^F(\rho_{s,t}(Y^X))
\le
\Omega_\ell^F(2C_0\rho).
\]
If instead $\rho<|t-s|$, then since $\Omega_\ell^F(r)=o(r)$ and therefore
$\Omega_\ell^F(r)\le Cr$ near zero,
\[
\Omega_\ell^F(\rho_{s,t}(Y^X))
\le
C|t-s|.
\]
Thus
\begin{equation}
\Omega_\ell^F(\rho_{s,t}(Y^X))
\le
C|t-s|
+
\widetilde\Omega_\ell^F(\rho),
\label{eq:35}
\end{equation}
where, near zero,
\begin{equation}
\widetilde\Omega_\ell^F(r)
:=
\Omega_\ell^F(2C_0r),
\qquad
\widetilde\Omega_\ell^F(r)=o(r).
\label{eq:36}
\end{equation}

Substituting \eqref{eq:35} into \eqref{eq:33}, we obtain
\begin{equation}
\|(\mathbf K_t)_\ell-(\tau_x\mathbf K_s)_\ell\|
\le
C\left(
|t-s|
+
\Omega_\ell^G(\rho)
+
\widetilde\Omega_\ell^F(\rho)
+
\rho^{p+1-\ell}
\right).
\label{eq:37}
\end{equation}
Since $\ell\le p-1$, one has $p+1-\ell\ge2$, and therefore
$r^{p+1-\ell}=o(r)$. After a non-decreasing extension away from the origin,
we may consequently define
\begin{equation}
\omega_\ell^K(r)
:=
C\left(
\Omega_\ell^G(r)
+
\widetilde\Omega_\ell^F(r)
+
r^{p+1-\ell}
\right),
\qquad 0\le r\le1.
\label{eq:38}
\end{equation}
Then
\[
\frac{\omega_\ell^K(r)}r\longrightarrow0
\qquad\text{as }r\downarrow0.
\]
Since, by definition of jet translation,
\[
(\tau_x\mathbf K_s)_\ell
=
K_\ell(s,X_s)
+
\sum_{j=1}^{p-\ell}
\frac1{j!}
K_{\ell+j}(s,X_s)[x^{\otimes j}],
\]
estimate \eqref{eq:37} is exactly \eqref{eq:19}.

The preceding argument proves the estimate locally near
$\rho+|t-s|=0$. The remaining range $\rho\le1$ is obtained by increasing
the constant and extending $\omega_\ell^K$ monotonically on $[0,1]$, using
boundedness preservation of the coefficient functionals.

It remains to verify the structural assumptions. Non-anticipativity of the
$K_\ell$ follows from non-anticipativity of the two families. Boundedness
preservation follows from \eqref{eq:17}: on compact sets the $G_j$ are
bounded, hence $Y^X$ has bounded range, and the $F_j$ are bounded on the
corresponding compact subset of $\mathbb R^m$.
Finally, assume 
$
(t_n,X^n_{t_n})\to (t,X_t)
$
 strongly from the left. By admissibility of
$\Gamma_{G_0}$,
$
(t_n,Y^{X^n}_{t_n})\to(t,Y^X_t)
$
strongly from the left. Strong left continuity of the $F_j$ and $G_j$,
together with the finite polynomial expression \eqref{eq:17}, therefore
gives
\[
K_\ell(t_n,X^n_{t_n})
\longrightarrow
K_\ell(t,X_t).
\]
Thus every $K_\ell$ is strongly left-continuous. Hence
$(K_0,\ldots,K_p)$ satisfies all the assumptions of the compatible class in
Theorem~\ref{thm:representation}.
Applying Theorem~\ref{thm:representation} to $K$ gives
\[
K_\ell=\nabla_\omega^\ell K_0,
\qquad
\ell=1,\ldots,p,
\]
on vertical fibres over continuous stopped paths. Applying the same theorem
to $F$ and $G$ identifies $F_j$ and $G_j$ with the corresponding vertical
derivatives of $F_0$ and $G_0$. Hence \eqref{eq:17} is the asserted
higher-order chain rule.
\end{proof}
Finally, applying Theorem~\ref{thm:representation} to the families $F$, $G$, and $K$ we obtain:
\begin{corollary}[Functional chain rule for Gubinelli coefficients]
\label{cor:functional-chain-rule}
Under the assumptions of Theorem~\ref{thm:functional-stability}, the
transformed family $K=(K_0,\ldots,K_p)$ is a compatible coefficient family
and
\[
K_\ell=\nabla_\omega^\ell K_0,
\qquad
\ell=1,\ldots,p,
\]
over continuous stopped paths. Consequently,
\[
\begin{aligned}
\nabla_\omega^\ell K_0(t,X_t)[v_1,\ldots,v_\ell]
=
\sum_{\pi\in\mathcal P_\ell}
&\nabla_\omega^{|\pi|}F_0(t,Y^X_t)
\left[
\nabla_\omega^{|B|}G_0(t,X_t)[v_B]
\right]_{B\in\pi}.
\end{aligned}
\]
\end{corollary}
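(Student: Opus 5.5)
The corollary follows by combining Theorem~\ref{thm:functional-stability} with Theorem~\ref{thm:representation}, applied three times: to $G=(G_0,\ldots,G_p)$, to $F=(F_0,\ldots,F_p)$, and to $K=(K_0,\ldots,K_p)$. Since no new estimates are needed, the plan is to check hypotheses and substitute identities.

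\emph{Step 1: $K$ is a compatible family and $K_\ell=\nabla_\omega^\ell K_0$.}
Theorem~\ref{thm:functional-stability} shows that $K$ is non-anticipative, boundedness-preserving and strongly left-continuous. It also shows that $K$ satisfies the compatibility estimate \eqref{eq:19} with $o(r)$ moduli $\omega_\ell^K$. These are exactly the hypotheses of Theorem~\ref{thm:representation}. Applying that theorem gives, for $(t,X)\in W_T^d$ with $t>0$ and every $a$,
\[
K_\ell(t,X_t^a)=\nabla_\omega^\ell K_0(t,X_t^a),\qquad \ell=1,\ldots,p .
\]

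\emph{Step 2: identify the coefficients of $G$ and $F$.}
Applying Theorem~\ref{thm:representation} to $G$ gives $G_j=\nabla_\omega^jG_0$ on the same continuous fibres. Applying it to $F$ gives $F_j(t,\cdot)=\nabla_\omega^jF_0(t,\cdot)$, but \emph{only} at continuous stopped paths of $\Lambda_T^m$ and their vertical perturbations. To use this at $(t,Y^X_t)$ we need $Y^X=\Gamma_{G_0}(X)$ to be continuous up to $t$ when $X$ is. This holds by part (1) of admissibility (Definition~\ref{def:admissible-causal}).

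\emph{Step 3: substitute into \eqref{eq:17}.}
Inserting the identities of Step 2 into the partition formula \eqref{eq:17} yields the displayed chain rule. The case $t=0$ is excluded, or handled by the constant-path convention at the end of Theorem~\ref{thm:representation}.

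\emph{Main obstacle: the fibre over $X^a_t$.}
The only delicate point is the fibre version of the statement, i.e.\ when $X$ is replaced by the discontinuous path $X^a_t$. There $Y^{X^a}$ may jump at $t$, so the stopped path $(t,Y^{X^a}_t)$ is a vertical perturbation $Y^X_{t-}+b\,\mathbf 1_{[t,T]}$ of a continuous path, with $b=G_0(t,X_t^a)-G_0(t,X_{t-})$. I would first try to cover this by the fact that Theorem~\ref{thm:representation} identifies $F_j$ with vertical derivatives at every vertical perturbation of a continuous stopped path. So I would write $Y^{X^a}_t$ in this form, using non-anticipativity and admissibility, and invoke the $F$-identification at that perturbed point. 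If this is too delicate, I would state the corollary only at $a=0$, i.e.\ on continuous stopped paths, which is the claim as written.
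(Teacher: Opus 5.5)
Your proposal is correct and follows the paper's own argument. The paper likewise applies Theorem~\ref{thm:representation} to $K$, using the compatibility, boundedness preservation and strong left-continuity supplied by Theorem~\ref{thm:functional-stability}, and to $F$ and $G$, and then substitutes the resulting identifications into \eqref{eq:17}.

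Two points in your write-up are details the paper leaves implicit, and both are correct. First, you invoke part~(1) of admissibility so that $(t,Y^X_t)$ is a continuous stopped path, which is where the $F$-identification is available. Second, by non-anticipativity the fibre satisfies $Y^{X^a_t}_t=Y^X_t+\bigl(G_0(t,X^a_t)-G_0(t,X_t)\bigr)\mathbf 1_{[t,T]}$. So your first route for the fibre works, and the fallback to $a=0$ is not needed.
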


\section{Application to path-dependent rough differential equations}
\label{sec:application-rde}
We illustrate how the representation and stability results of
Sections~\ref{sec:converse} and \ref{sec.chainrule} may be combined with the controlled-path fixed-point
method used  in
\cite{kwossek2025} to obtain new existence and uniqueness results for solutions of rough functional differential equations. 

For a path
$Y:[0,T]\to\mathbb R^m$, define 
\begin{equation}
\label{eq:volterra-memory}
\mathcal M_t(Y)
:=
\int_0^t K(t,r)\psi(Y_r)\,dr.
\end{equation}
where 
\[
K:\Delta_T\longrightarrow\operatorname{Lin}(\mathbb R^h,\mathbb R^e)\qquad \Delta_T:=\{(t,r):0\le r\le t\le T\}
\]
is a Volterra kernel.
Since
\begin{equation}
\label{eq:volterra-memory-derivative}
\frac{d}{dt}\mathcal M_t(Y)
=
K(t,t)\psi(Y_t)
+
\int_0^t
\partial_1K(t,r)\psi(Y_r)\,dr,
\end{equation}
the evolution of  $M_t(Y)$ contains a path-dependent term.
We consider the path-dependent rough differential equation
\begin{equation}
\label{eq:volterra-rde}
Y_t
=
y_0
+
\int_0^t
b\bigl(r,Y_r,\mathcal M_r(Y)\bigr)\,dr
+
\int_0^t
\sigma\bigl(r,Y_r,\mathcal M_r(Y)\bigr)\,d\mathbf X_r .
\end{equation}

\begin{proposition}[RDE with Volterra memory]
\label{prop:volterra-memory-rde}
Let $q\in(2,3)$ and let $\mathbf X$ be a continuous geometric
$q$-rough path over $\mathbb R^d$. Assume that
$
K,\partial_1K
$
are continuous and bounded on $\Delta_T$,
\[
\psi\in C_b^2(\mathbb R^m;\mathbb R^h),\quad
b\in
C_b^3\bigl(
[0,T]\times\mathbb R^m\times\mathbb R^e;
\mathbb R^m
\bigr),\quad
\sigma\in
C_b^3\bigl(
[0,T]\times\mathbb R^m\times\mathbb R^e;
\operatorname{Lin}(\mathbb R^d,\mathbb R^m)
\bigr).
\]
Then, for every $y_0\in\mathbb R^m$,
\eqref{eq:volterra-rde} admits a unique solution
$(Y,Y')$ controlled by $\mathbf X$, with
\begin{equation}
\label{eq:volterra-solution-derivative}
Y'_t
=
\sigma\bigl(t,Y_t,\mathcal M_t(Y)\bigr).
\end{equation}
Then  $(\Sigma_0,\Sigma_1,\Sigma_2)$ defined by
\begin{eqnarray}
\Sigma_0(t,\eta_t)
:=
\sigma\bigl(t,\eta(t),\mathcal M_t(\eta)\bigr),
& \Sigma_1(t,\eta_t)[v]:=
D_y\sigma\bigl(t,\eta(t),\mathcal M_t(\eta)\bigr)[v], \label{eq:volterra-vertical-derivative}\\
\Sigma_2(t,\eta_t)[v_1,v_2]
&:=
D_y^2\sigma\bigl(t,\eta(t),\mathcal M_t(\eta)\bigr)
[v_1,v_2]
\end{eqnarray}
is a compatible coefficient family of order two and
\begin{equation}
\nabla_\omega\Sigma_0=\Sigma_1,
\qquad
\nabla_\omega^2\Sigma_0=\Sigma_2
\end{equation}
 over continuous stopped paths.
Thus for every controlled path $(Y,Y')$,
$
t\longmapsto
\Sigma(t,Y_t)$
is controlled by $\mathbf X$, with Gubinelli derivative
\begin{equation}
\label{eq:volterra-coefficient-gubinelli}
\Sigma'(Y,Y')_t
=
D_y\sigma\bigl(
t,Y_t,\mathcal M_t(Y)
\bigr)Y'_t.
\end{equation}
\end{proposition}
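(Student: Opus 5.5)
The proof splits into two largely independent parts: well-posedness of \eqref{eq:volterra-rde}, and identification of the Gubinelli structure of $\Sigma_0$.

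\emph{Well-posedness.} I would run the standard controlled-path fixed-point argument, as in \cite{kwossek2025}, on the space of pairs $(Y,Y')$ controlled by $\mathbf X$ on a short interval $[0,\tau]$ with $Y_0=y_0$. The key point is that the memory term is harmless at the rough level. For a continuous $Y$, $t\mapsto\mathcal M_t(Y)$ is $C^1$ by \eqref{eq:volterra-memory-derivative}, with derivative bounded by $\|K\|_\infty\|\psi\|_\infty+T\|\partial_1K\|_\infty\|\psi\|_\infty$. It is also Lipschitz in $Y$ in supremum norm, uniformly in time, with constants depending only on $K,\partial_1K,\psi$.

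Hence, for $(Y,Y')$ controlled, $Z_t:=\sigma(t,Y_t,\mathcal M_t(Y))$ is controlled with $Z'_t=D_y\sigma(\cdot)Y'_t$. Indeed, a Taylor expansion of $\sigma\in C^3_b$ in $(t,y,m)$ gives
\[
Z_{s,t}=D_y\sigma(s,\cdot)Y_{s,t}+O(|t-s|+|Y_{s,t}|^2+|\mathcal M_{s,t}|),
\]
and $|\mathcal M_{s,t}|\lesssim|t-s|\le|t-s|^{2/q}$. The drift integral is an ordinary Lebesgue integral, so it contributes an increment of order $|t-s|$ with zero Gubinelli derivative.

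The map $\mathcal S(Y,Y')=\bigl(y_0+\int b\,dr+\int Z\,d\mathbf X,\;Z\bigr)$ is then well defined by the rough integration theory recalled in Section~\ref{subsec:controlled-paths}. The usual estimates give invariance of a ball and contraction for small $\tau$. The $C^3_b$ regularity of $\sigma$ handles the difference of two compositions, and the extra Volterra contribution enters only through $\sup_r|\mathcal M_r(Y)-\mathcal M_r(\tilde Y)|\lesssim\sup|Y-\tilde Y|$. Since the constants do not depend on $y_0$ (by boundedness of the coefficients), one can concatenate over intervals of a fixed length to reach $[0,T]$. I expect the contraction estimate for the remainder of $Z-\tilde Z$ to be the most technical step. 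It is standard, but one must check that the memory term adds only terms of order $|t-s|\,\|Y-\tilde Y\|_\infty$.

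\emph{Compatible family.} Here I would not reprove the expansion directly; I would use Theorem~\ref{thm:functional-chain-rule}. Write $\Sigma_0=F_0\circ\Gamma_{G_0}$ with the following choices:
\begin{itemize}
\item $G_0(t,\eta_t)=(\eta(t),\mathcal M_t(\eta))\in\mathbb R^{m+e}$, with $G_1=(\mathrm{Id},0)$ and $G_2=0$;
\item $F_0(t,(y,m)_t)=\sigma(t,y(t),m(t))$, with $F_j=D^j_{(y,m)}\sigma$.
\end{itemize}

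The family $F$ satisfies \eqref{eq:abstract-modulus-expansion} with $\omega(r)=r^{3-\ell}$ by a classical Taylor expansion in the spatial variable plus the Lipschitz dependence of $\sigma$ on $t$. Strong left continuity of $F$ is immediate, since it depends only on the current value.

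For $G$, the expansion of the first component is exact. For the second component, $|\mathcal M_t(\eta)-\mathcal M_s(\eta)|\le C|t-s|$, which is absorbed into the $|t-s|$ term. Strong left continuity of $\mathcal M$ follows by dominated convergence, since the perturbations are uniformly bounded and converge pointwise. Admissibility of $\Gamma_{G_0}$ follows similarly.

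With these choices, the chain-rule formula \eqref{eq:17} contains only $G_1$-blocks of size one, so it reduces to $K_\ell=D^\ell_y\sigma$, that is $\Sigma_\ell$. Theorem~\ref{thm:functional-chain-rule} then shows that $(\Sigma_0,\Sigma_1,\Sigma_2)$ is a compatible family. Theorem~\ref{thm:representation} gives $\nabla_\omega\Sigma_0=\Sigma_1$ and $\nabla^2_\omega\Sigma_0=\Sigma_2$ over continuous stopped paths.

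Finally, the first-order functional chain rule \cite[Theorem~3.17]{ananova2023} (or the direct computation above) gives \eqref{eq:volterra-coefficient-gubinelli} for every controlled $(Y,Y')$.
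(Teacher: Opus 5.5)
Your proposal is correct and follows the paper's route. It uses the same factorization $\Sigma_0=F_0\circ\Gamma_{G_0}$, with $G_0=(\eta(t),\mathcal M_t(\eta))$, $G_1=(\mathrm{Id},0)$, $G_2=0$ and $F_j=D^j\sigma$, fed into Theorem~\ref{thm:functional-chain-rule} and then Theorem~\ref{thm:representation}. For well-posedness it runs the same localized Picard argument, in which the memory is a finite-variation component with zero Gubinelli derivative.

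One point should be made explicit. When you restart at time $a$, the equation depends on the whole history $Y|_{[0,a]}$ through $\mathcal M$, not just on the new initial value; the paper isolates this as $H_a^\eta(t)=\int_0^aK(t,r)\psi(\eta_r)\,dr$. Your memory bounds depend only on $K$, $\partial_1K$ and $\psi$, so they are uniform in that history and the concatenation goes through.
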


\begin{proof}
Define
\[
G_0(t,\eta_t)
:=
\bigl(
\eta(t),\mathcal M_t(\eta)
\bigr),
\qquad
G_1(t,\eta_t)[v]:=(v,0),
\qquad
G_j:=0,\quad j\ge2.
\]
For $0\le s<t\le T$,
\begin{align}
\mathcal M_t(\eta)-\mathcal M_s(\eta)
={}&
\int_0^s
\bigl(K(t,r)-K(s,r)\bigr)\psi(\eta_r)\,dr
+
\int_s^t
K(t,r)\psi(\eta_r)\,dr.
\label{eq:volterra-memory-increment}
\end{align}
Hence, using boundedness of $K$, $\partial_1K$ and $\psi$,
\begin{equation}
\label{eq:volterra-memory-lipschitz-time}
|\mathcal M_t(\eta)-\mathcal M_s(\eta)|
\le C|t-s|.
\end{equation}
It follows that
\[
G_0(t,\eta_t)-G_0(s,\eta_s)
-G_1(s,\eta_s)[\eta_{s,t}]
=
\bigl(
0,
\mathcal M_t(\eta)-\mathcal M_s(\eta)
\bigr),
\]
while $G_1$ is constant and all higher coefficients vanish.
Thus $(G_0,G_1,\ldots)$ is a compatible coefficient family, with
a remainder of order $|t-s|$.
The same estimates, together with dominated convergence, show that
the $G_j$ are boundedness-preserving and strongly left-continuous.
Moreover, the map
\[
\Gamma_{G_0}(\eta)
=
\bigl(\eta,\mathcal M(\eta)\bigr)
\]
is admissible in the sense of Definition~\ref{def:admissible-causal}. Indeed,
$\mathcal M(\eta)$ is continuous whenever $\eta$ is continuous,
and strong left convergence of stopped paths is preserved by
\eqref{eq:volterra-memory}.

Now consider on
$\Lambda_T^{m+e}$ the non-anticipative functional
\[
F_0(t,z_t):=\sigma(t,z(t)).
\]
Its coefficient hierarchy is the ordinary differential hierarchy
of $\sigma$. Applying Theorem~\ref{thm:functional-chain-rule} to
$F$ and $G$ yields a compatible hierarchy for
$
F_0\circ\Gamma_{G_0}
=
\Sigma.$
Since the memory component of $G_1$ vanishes,
the first-order chain rule gives
\[
\nabla_\omega\Sigma(t,\eta_t)[v]
=
D_y\sigma
\bigl(t,\eta(t),\mathcal M_t(\eta)\bigr)[v],
\]
which is \eqref{eq:volterra-vertical-derivative}.
In particular, the results of Sections~\ref{sec:converse} and \ref{sec.chainrule} identify the first Gubinelli
coefficient intrinsically with the vertical derivative of the
 functional $\Sigma$.

It remains to verify that the resulting coefficient belongs to the
controlled-path class required for the fixed-point argument.
We do this locally, which is also the form in which the proof of
the RFDE theorem in
\cite{kwossek2025}
is carried out.

Fix $a<T$ and a continuous history
$\eta:[0,a]\to\mathbb R^m$.
For an extension $Y$ of $\eta$ to an interval
$I=[a,a+\delta]$, write
\begin{equation}
\label{eq:localized-memory}
\mathcal M_t^\eta(Y)
=
H_a^\eta(t)
+
\int_a^tK(t,r)\psi(Y_r)\,dr,
\qquad
H_a^\eta(t)
:=
\int_0^aK(t,r)\psi(\eta_r)\,dr.
\end{equation}
By boundedness of $\partial_1K$ and $\psi$,
\[
\|H_a^\eta\|_{1\text{-var};I}
\le C\delta.
\]
Moreover, differentiation of
\eqref{eq:localized-memory} gives
\[
\frac{d}{dt}\mathcal M_t^\eta(Y)
=
\partial_tH_a^\eta(t)
+
K(t,t)\psi(Y_t)
+
\int_a^t
\partial_1K(t,r)\psi(Y_r)\,dr,
\]
and therefore
\begin{equation}
\label{eq:memory-local-variation}
\|\mathcal M^\eta(Y)\|_{1\text{-var};I}
\le C\delta.
\end{equation}
If $Y,\widetilde Y$ are two extensions of the same past
$\eta$, then
\begin{align}
\|
\mathcal M^\eta(Y)-\mathcal M^\eta(\widetilde Y)
\|_{1\text{-var};I}
&\le
C\delta\,
\|Y-\widetilde Y\|_{\infty;I}
\nonumber\\
&\le
C\delta
\left(
|Y_a-\widetilde Y_a|
+
\|Y-\widetilde Y\|_{q\text{-var};I}
\right).
\label{eq:memory-local-lipschitz}
\end{align}
If $(Y,Y')$ is controlled by $\mathbf X$ on $I$, then
$
Z:=(Y,\mathcal M^\eta(Y)),
Z':=(Y',0),$
is again controlled by $\mathbf X$. Indeed, the first component
has the given controlled expansion, while the second component
has finite variation by \eqref{eq:memory-local-variation}.
Furthermore, on bounded subsets of the controlled-path space,
\eqref{eq:memory-local-lipschitz} yields
\begin{equation}
\label{eq:augmented-controlled-lipschitz}
\|
(Z,Z');
(\widetilde Z,\widetilde Z')
\|_{\mathbf X,q;I}
\le
C
\left(
|Y_a-\widetilde Y_a|
+
\|
(Y,Y');
(\widetilde Y,\widetilde Y')
\|_{\mathbf X,q;I}
\right).
\end{equation}

Applying the standard smooth-composition estimate for controlled
paths to
$
(t,Z_t)
\mapsto
\sigma(t,Z_t)
$
and using \eqref{eq:augmented-controlled-lipschitz}, we obtain,
on every controlled-path ball of radius $R$,
\begin{equation}
\label{eq:sigma-controlled-growth}
\|
\Sigma(Y),\Sigma'(Y,Y')
\|_{\mathbf X,q;I}
\le
C_R
\left(
1+\|(Y,Y')\|_{\mathbf X,q;I}
\right)^2
\left(
1+\|\mathbf X\|_{q;I}
\right)^2,
\end{equation}
and
\begin{align}
&
\|
\Sigma(Y),\Sigma'(Y,Y');
\Sigma(\widetilde Y),
\Sigma'(\widetilde Y,\widetilde Y')
\|_{\mathbf X,q;I}
\le
C_R
\left(
|Y_a-\widetilde Y_a|
+
\|
(Y,Y');
(\widetilde Y,\widetilde Y')
\|_{\mathbf X,q;I}
\right).
\label{eq:sigma-controlled-lipschitz}
\end{align}
The constants depend only on bounded sets of
$\|K\|_\infty$, $\|\partial_1K\|_\infty$,
$\|\psi\|_{C_b^2}$, $\|\sigma\|_{C_b^3}$,
the controlled-path norm and the rough-path norm.

Estimates
\eqref{eq:sigma-controlled-growth} and
\eqref{eq:sigma-controlled-lipschitz} show that, once the past
$\eta$ up to time $a$ is fixed, the coefficient 
\[
(Y,Y')
\longmapsto
\bigl(\Sigma(Y),\Sigma'(Y,Y')\bigr)
\]
is locally Lipschitz on controlled-path balls over
$I=[a,a+\delta]$. The same holds for the drift coefficient.
Together with the standard local estimates for rough integration,
this implies that, for $\delta$ and the rough-path variation on
$I$ sufficiently small, the Picard map associated with
\eqref{eq:volterra-rde} is a contraction on a controlled-path
ball.

Starting with $a=0$ gives a unique local solution. Once the
solution has been constructed on $[0,a]$, its restriction to this
interval fixes the history $\eta$, so the same argument may be
restarted on $[a,a+\delta]$. Since the coefficients are bounded
and the required local bounds are uniform on bounded
controlled-path sets, finitely many such intervals cover
$[0,T]$. The local solutions therefore concatenate to a unique
global solution.

Finally, for a solution of \eqref{eq:volterra-rde}, the standard
rough-integral expansion gives
\[
Y'_t
=
\sigma\bigl(t,Y_t,\mathcal M_t(Y)\bigr),
\]
while the controlled derivative of the integrand is obtained from
\eqref{eq:volterra-vertical-derivative},
\[
\Sigma'(Y,Y')_t
=
\nabla_\omega\Sigma(t,Y_t)[Y'_t]
=
D_y\sigma
\bigl(t,Y_t,\mathcal M_t(Y)\bigr)Y'_t.
\]
This proves the result.
\end{proof}

\begin{remark}
\label{rem:volterra-memory}
The 
local Lipschitz estimates used here are weaker than the global coefficient
assumptions used in \cite{kwossek2025}, but 
sufficient for the localized fixed-point argument used in the
proof.
The   contribution of Sections~\ref{sec:converse} and \ref{sec.chainrule} is the intrinsic identification of the controlled coefficients:
\[
\Sigma_j=\nabla_\omega^j\Sigma_0,\qquad j=1,2.
\]
\end{remark}
\section{Discussion}
\label{sec:discussion}

We have identified two complementary mechanisms
relating non-anticipative functional calculus \cite{ananova2017,cont2012,cont2019} and controlled rough paths. In the
forward direction, Proposition~\ref{prop:functional-taylor-expansion} shows that sufficiently regular
non-anticipative functionals generate compatible controlled Taylor
hierarchies through their vertical derivatives. Under horizontal
Lipschitz regularity alone, the resulting remainder exponents are
level-dependent and need not satisfy the classical linear grading of
controlled rough paths. Theorem~\ref{thm:weak-horizontal-rough-integral} identifies when these weaker
estimates nevertheless suffice for rough integration. Proposition~\ref{prop:classical-remainder} further shows that the
loss produced by the direct approximation argument is not always
intrinsic: for
\[
\sqrt2-1<\gamma\le\frac12
\]
a compensated sewing argument recovers the classical
$2\gamma$ controlled remainder.

The converse direction is structurally different and constitutes our
main representation result: Theorem~\ref{thm:representation} shows that a
family of non-anticipative coefficient functionals which satisfies
compatible controlled Taylor estimates as the underlying control
varies cannot have arbitrary coefficients. Under strong left
continuity and the $o(r)$ remainder condition,
\[
G_j=\nabla_\omega^jG_0,\qquad j=1,\ldots,p,
\]
over continuous stopped paths. Thus compatibility
across controls turns the relative differential structure of
controlled rough paths into an intrinsic differential structure on
path space. In particular, the coefficient hierarchy is symmetric
and uniquely determined by its zeroth-order component. This should be
contrasted with the fixed-control setting, where Gubinelli
derivatives need not be unique without additional roughness or
non-degeneracy assumptions on the driving signal.

Section~\ref{sec.chainrule} shows that this representation is compatible with
nonlinear functional transformations. Theorem~\ref{thm:functional-chain-rule} proves that
admissible composition preserves the class of compatible coefficient
families: if two such families are composed, the coefficients
obtained from the higher-order chain rule again satisfy compatible
controlled Taylor estimates. Applying Theorem~\ref{thm:representation} to the transformed
family then identifies these coefficients with the successive
vertical derivatives of the transformed zeroth-order functional.
Thus compatible families form a class on which non-anticipative
functional composition admits an intrinsic differential calculus.

The Volterra equation studied in Section~\ref{sec:application-rde} illustrates a
consequence of this closure property. For
\[
\mathcal M_t(Y)
=
\int_0^tK(t,r)\psi(Y_r)\,dr,
\]
the coefficient
\[
\Sigma(t,Y_t)
=
\sigma\bigl(t,Y(t),\mathcal M_t(Y)\bigr)
\]
exhibits path-dependence when the kernel $K$ depends
nontrivially on its first variable: the evolution of the memory term
contains
\[
\int_0^t\partial_1K(t,r)\psi(Y_r)\,dr
\]
and thus retains information about the  past trajectory.
At the same time, a vertical perturbation at the current time does
not alter the Lebesgue integral defining $\mathcal M_t$. Consequently,
\[
\nabla_\omega\Sigma(t,Y_t)
=
D_y\sigma\bigl(t,Y(t),\mathcal M_t(Y)\bigr),
\]
and the higher vertical coefficients are obtained from the same
chain-rule mechanism. The quantitative controlled-path estimates
needed for the associated RDE are separate from this structural
identification. For the Volterra class considered in Section~6 they
follow from the bounded regularity of the kernel and coefficients,
allowing the localized controlled-path fixed-point argument used in
rough functional differential equations to be applied with the past
history fixed. This illustrates the respective roles of the two
parts of the theory: Sections~\ref{sec:converse}--\ref{sec.chainrule} identify the intrinsic coefficient
hierarchy, while the quantitative controlled-path estimates provide
well-posedness of the resulting RDE.

A different approach has recently been developed by
Cuchiero, Guo and Primavera~\cite{cuchiero2025}, who consider
functionals defined directly on c\`adl\`ag weakly geometric rough
paths, i.e. which may  depend on the rough-path lift rather than
only on the underlying path. They introduce vertical
derivatives adapted to the Lie-group structure of rough-path space
and derive functional It\^o formulas and functional Taylor
expansions involving the full signature. An important distinction
is that their iterated  derivatives {\it need  not commute}. 
The vertical derivatives considered here are  derivatives along the vertical fibres of stopped
path space and are therefore symmetric \cite[Ch. 5]{cont2012}. Accordingly, the Taylor
polynomials of Proposition~\ref{prop:functional-taylor-expansion} involve  symmetric tensors
$X_{s,t}^{\otimes j}$; when paired with a geometric rough-path lift,
these correspond to contractions against the symmetric part of the
$j$th signature level.  If one is interested in functionals of the underlying path, Proposition \ref{prop:functional-taylor-expansion} provides functional expansions under fewer assumptions than
Cuchiero et al. \cite{cuchiero2025}, who need further regularity to control the asymmetric terms.

This suggests several directions for future investigation.  One is to seek a
representation theorem directly on rough-path space, replacing the
symmetric vertical hierarchy by a non-commutative system of
Lie-type derivatives capable of detecting the full signature.
Another is to develop quantitative versions of the stability theorem
of Section~\ref{sec.chainrule} which propagate power-type remainder estimates, rather
than only the abstract $o(r)$ compatibility condition. Such results
would connect the representation and chain-rule theory more directly
with rough integration and with well-posedness results for broader
classes of path-dependent rough differential equations.

\end{document}